\documentclass[12pt]{amsart}

\topmargin  -5mm
\evensidemargin 3mm
\oddsidemargin  3mm
\textwidth  162mm
\textheight 218mm
\parskip 6pt
\parindent=0pt

\usepackage[english]{babel}

\usepackage{times,amsfonts,amsmath,amssymb,dsfont} 
\usepackage{pdfsync}
\usepackage{graphicx} 
\usepackage{mathabx}

\usepackage{cases}

\usepackage{soul}
\setstcolor{red}

\usepackage{color}

\definecolor{gr}{rgb}   {0.,   0.69,   0.23 }
\definecolor{bl}{rgb}   {0.,   0.5,   1. }
\definecolor{mg}{rgb}   {0.85,  0.,    0.85}
\definecolor{or}{rgb}   {0.9,  0.5,   0.}

\definecolor{webred}{rgb}{0.75,0,0}
\definecolor{webgreen}{rgb}{0,0.75,0}
\usepackage[citecolor=webgreen,colorlinks=true,linkcolor=webred]{hyperref}

\newtheorem{theorem}{Theorem}[section]
\newtheorem{proposition}[theorem]{Proposition}
\newtheorem{lemma}[theorem]{Lemma}
\newtheorem{corollary}[theorem]{Corollary}

\theoremstyle{definition}

\newtheorem{notation}[theorem]{Notation}

\theoremstyle{remark}
\newtheorem{remark}[theorem]{Remark}

\newcommand{\ba}{\begin{array}}
\newcommand{\ea}{\end{array}}

\newcommand{\Bk}{\color{black}}

\newcommand{\N}{\mathbb{N}}
\newcommand{\R}{\mathbb{R}}
\newcommand{\C}{\mathbb{C}}
\newcommand{\Z}{{\mathbb Z}}

\newcommand{\rd}{{\mathrm d}}
\newcommand{\one}{\mathds{1}}

\newcommand{\cE}{\mathcal{E}}
\newcommand{\cF}{\mathcal{F}}

\newcommand{\cT}{\mathcal{T}}
\newcommand{\cU}{\mathcal{U}}

\newcommand{\bel}{\begin{equation} \label}
\newcommand{\ee}{\end{equation}}

\newcommand{\dP}{\mathbb{P}}

\newcommand{\qm}{\mathrm{qm}}

\newcommand\dist{\operatorname{dist}}

\newcommand\supp{\operatorname{supp}}
\newcommand\curl{\operatorname{curl}}

\newcommand\Dom{\operatorname{Dom}}
\newcommand\Id{\operatorname{\mathbb{I}}}


\begin{document}\setul{2.5ex}{.25ex}
\title[]{Spectrum of the Iwatsuka Hamiltonian at thresholds}
\author[P.\ Miranda]{Pablo Miranda}\address{Departamento de Matem\'atica y Ciencia de la Computaci\'on, Universidad de Santiago de Chile, Las Sophoras 173. Santiago, Chile.}\email{pablo.miranda.r@usach.cl}
\author[N.\ Popoff]{Nicolas Popoff}
\address{Universit\'e de Bordeaux, IMB, UMR 5251, 33405 TALENCE cedex, France}
\email{Nicolas.Popoff@math.u-bordeaux1.fr}
\subjclass[2010]{35J10, 81Q10,
35P20}
\keywords{ Magnetic Laplacian;  Asymptotics of Band Functions; Current Estimates; Spectral Shift Function}
\begin{abstract}
We consider the  bi-dimensional Schr\"odinger operator with unidirectionally constant magnetic field, $H_0$, sometimes known as the ``Iwatsuka Hamiltonian''. This operator is  analytically fibered, with band functions converging to finite limits  at infinity. We first obtain the asymptotic behavior of the band functions and its derivatives. Using this results we give estimates on the current and on the localization of states whose energy value is close to a  given \emph{threshold} in the spectrum of $H_0$. In addition, for   non-negative  electric perturbations $V$ we  study  the spectral density of $H_0\pm V,$ by considering  the   Spectral Shift Function associated  to the operator  pair $(H_0\pm V,H_0)$.  
We compute the   asymptotic behavior of  the  Spectral Shift Function at the thresholds, which are the only points where it can grows to infinity.

\end{abstract}
\maketitle

\section{Introduction}
\subsection{Context and motivation}

Let $b$ be a scalar magnetic field in $\R^2$ that 
is translationally invariant in the sense that it does not depend on one of the spatial variables: $b(x,y)=b(x)$. Let $a:\R\to \R$ be the  function  $a(x)=\int_0^xb(t)\rd t$. Then  the potential $A(x):=(0,a(x))$ satisfies $\curl A=b$. 
In this article we study 
 the magnetic Schr\"odinger operator
$$H_{0}:=(-i\nabla-A)^2=-\partial_{x}^2+(-i\partial_{y}-a(x))^2$$
acting in $L^2(\R^2)$. 

In \cite{Iwa85}  Akira Iwatsuka considered this class of Hamiltonians  in order to show new examples of magnetic Schr\"o\-dinger operators with purely absolutely continuous spectrum.   This model has many interesting  properties,   and some of them have been well studied in the past. For instance,  in the physics literature it can be seen as an open quantum waveguide in the sense that the presence of such magnetic field generates transport for a spinless particle in a plane (see \cite{PeeRej00} for  references). Rigorous propagation  properties of this model are described for example  in \cite{manpu,lww,dgr,hissocc2}. Additionally,  the spectral properties of $H_0$ have also  been studied. In this sense, one important conjecture on the  spectrum of $H_0$  is that it should be purely  absolutely continuous as long as $b$ is non-constant. However, although it  have been found different conditions on $b$   that ensure this property, the general result remains unproved  (see \cite{ Iwa85, manpu, exkov, matej}).  

In order to describe the problems that  we propose to study in this article it is necessary  to write  a well known decomposition of the operator $H_0$.

\emph{Fiber decomposition of $H_0$.}
Let ${\mathcal F}$ be  the partial Fourier transform  with respect to the translationally invariant variable $y\in \R$: \Bk
$$  (\mathcal{F}u)(x,k)=\frac{1}{(2\pi)^{1/2}}\int_{\R}e^{-iky}u(x,y)\,\rd y,\quad \mbox{for}\,\,u \,\in\, C_0^\infty(\R^2).$$
Then
\bel{direc} \mathcal{F} H_0\mathcal{F}^*=\int_{\R}^\oplus h(k)\,\rd k,\ee
where $h(k)$ is a self-adjoint operator acting in $L^2(\R)$, defined by
\bel{sep8}h(k)=-\frac{d^2}{\rd x^2}+(a(x)-k)^2, \quad k \,\in\, \R.\ee
For any $k \in  \R$, under hypotheses \eqref{b_bound} below, $h(k)$ has  compact resolvent, {  therefore  its spectrum  is discrete, and moreover, it is simple}. We denote the increasing sequence of eigenvalues by $\{E_n(k)\}_{n=1}^\infty$. For any $n \in \N$, \Bk  the \emph{band function} $E_n(\cdot)$  is analytic as a function of $k \in \R$ \cite{Iwa85}.

In a broad sense, the results of  this article are valid for magnetic fields that satisfy: 
\bel{b_bound}
  \begin{array}{llr}
  \rm{a})& b \in C^{\infty}(\R).\\
 \rm{b})&  b \ \mbox{is  increasing}.\\
 \rm{c})& {\lim_{x\to  \pm\infty}b(x)=b_\pm, \quad \mbox{for } b_+>b_->0.}\\
\end{array}
\ee
Even more, each one  of these hypotheses can be relaxed, but the results are much  easier to read for these kind in magnetic fields. 

Set $\underline{\mathcal E}_n:=b_-(2n-1)$ and $\overline{\mathcal E}_n:=b_{+}(2n-1)$, then, condition \eqref{b_bound}  implies that   $\underline{\mathcal E}_n\leq E_n(k)\leq \overline{\mathcal E}_n$ for all $k \in \R.$ Furthermore, $E_n(\cdot)$ is strictly increasing for any $n\in\N$, and   $\lim_{k \to +\infty} E_n(k)=\overline{\mathcal E}_n$;    $\lim_{k \to -\infty} E_n(k)=\underline{\mathcal E}_n$  for all $n \in \N$, see \cite{Iwa85, manpu}.  In particular, the spectrum of $H_0$, $\sigma(H_0)$, is purely absolutely continuous and  
\bel{oct7}\sigma(H_0)=\bigcup_{n \geq1}\overline{E_n(\R)}=\bigcup_{n\geq 1}[\underline{\mathcal E}_n,\overline{\mathcal E}_n].\ee
Moreover, using decomposition \eqref{direc} and the monotonicity of $E_n$ it is possible to see that the multiplicity of the spectrum of $H_0$ changes at any point in 
the set of points $\{ \underline{\cE}_{n},  \overline{\cE}_{n}\}_{n=1}^\infty$. This set of points will be refered as \emph{thresholds} in  $\sigma(H_0)$. We denote it by $\cT_{H_0}$. 
  
\subsection{Description of the main results of the article}  
 The first step in the physical description of our magnetic system  at energies near thresholds, requires to precise the behavior of the band functions when  $k\to\pm\infty$. Therefore,  our first task in this article is to describe qualitatively this behavior. We will do this for magnetic fields converging to their limits as power decaying functions, with similar hypotheses on its derivatives (see \eqref{b_bound2}). In Theorem \ref{T:asymptband} we  obtain    the first three asymptotic terms of $k\mapsto E_n(k)$, which are  roughly  given by $(2n-1)b(\frac{k}{b_{+}})$. We also give  the  asymptotic behavior  of the first and second derivative, which can be formally  obtained as the derivatives of the asymptotic term of $E_n$. A byproduct of our analysis in  section \ref{sec_asymp_band} is  an estimate of the eigenfunctions by a linear combination of Hermite's functions.
  
In section \ref{S:bulkstates}, we exploit these results to obtain properties of functions localized in energy near a given threshold. This problematic arises from physical consideration: it consists in taking a quantum state of a given energy, and to study its evolution under the magnetic system considered. We are particularly interested in the propagation properties in the translationally invariant direction (that is the $y$ direction). As explained in \cite{manpu,hissocc2}, a state localized in energy away from thresholds bear a current which can be bounded from below. Moreover, it is localized in the $x$ variable, i.e. it decays in the direction in which the magnetic field varies. On the contrary, when the energy interval contains a thresholds, no lower bound on the current is available. Inspired by the approach of \cite{hispopsoc}, we study the {\it bulk component} of a quantum states localized in energy near a given threshold. Using the asymptotics of the band functions and its derivatives, we are able to provide a quantitative control on the  current carried by the given state,  as the distance from its energy to the threshold goes to 0 (see Proposition \ref{corocurrent}). Further, the asymptotics of the eigenfunctions of the fiber operator allow us to prove that a state localized in a energy near a threshold is concentrated  in a zone where the magnetic field is close to its limit. See Theorem \ref{theo_loca}. 

In sections \ref{secSSF}--\ref{proofasym_ssf} we will perturb the operator $H_0$, and consider the  spectral density of the resulting   operator. More specifically, we will take a non-negative decaying electric potential $V$  and will define $H:=H_0\pm V$.  Under these hypotheses the essential spectrum of $H$ coincide with the essential spectrum of $H_0$. However,   discrete eigenvalues may appear. Counting the number of such eigenvalues has been considered for $b$ constant and non-constant (for the Landau case, $b=$ constant, see for instance   \cite{rai1,raiwar,pushroz2,roz}, and for Iwatsuka \cite{shi2, DoHiSo14, Mir16}). In this article we  study  an  extension of this problem to  the continuous  spectrum of $H$, namely, we describe the properties of the spectral shift function (SSF) for the operator pair $(H,H_0).$ (The connection between the eigenvalue counting function and the SSF is given by  \eqref{15_sep_16}).


In Theorem \ref{thssf} we prove that the SSF   is continuous everywhere except at the thresholds in $\cT_{H_0}$
and the eigenvalues of $H$. Moreover, we show that it  is a bounded function in compact subsets of $\R\setminus\cT_{H_0}$. 
Subsequently,  Theorem \ref{thVcompact} states that the SSF is bounded at the thresholds as well, as long as the rate of convergence of $V$ to zero is large compared to the rate of convergence of $b(x)$ to $b_\pm$\Bk. On the contrary, in Theorem \ref{thssf2} we  describe  the behavior of the SSF at the thresholds, for certain potentials $V$ that decay moderately at infinity. In this case, the SSF is unbounded 
as a fixed threshold is approached. This result is given by  the semiclassical formula \eqref{asym_ssf} which relates the behavior of the SSF with a volume in phase space determined by the potential $V$, and partly extend some of the results obtained in the above articles on the Landau and Iwatsuka Hamiltonians.

The approach  we use here to study the SSF comes somehow from \cite{rai1}, where the discrete spectrum of the Landau Hamiltonian was described. This approach was also used to analyze  the discrete spectrum of the Iwatsuka Hamiltonian in \cite{Mir16}. However,  in the aforementioned articles it was not necessary to make a detailed analysis of the corresponding  band functions; they are constants in the Landau case and some variational estimates are enough for the Iwatsuka case. There are  other models where the  asymptotics of band functions was necessary   to make the spectral analysis, the closest one being a half-plane submitted to a  constant magnetic field (\cite{bmr2,hispopsoc,brumi1}). 
One of the differences with  our situation is that we consider a whole class of varying magnetic field, and in these references the magnetic field is constant, allowing, for instance, the use of special functions to study the fiber operator.

 In this article, we will  state almost  all of our results for the thresholds $\{\overline{\cE}_n\}_{n=1}^\infty$.  Since this are thresholds corresponding to the fact that $b(x)\to b_{+}$ as $x\to +\infty$,  we only write the assumptions on $b$ for $x\to+\infty$. It is clear that the same assumptions as $x\to -\infty$ would lead to symmetric results for the  thresholds  $\{\underline{\cE}_n\}_{n=1}^\infty$. 

\section{Asymptotics of the band functions}\label{sec_asymp_band}

We will  assume:
\bel{b_bound2}
  \begin{array}{llr}
  \rm{a})& \exists\, x_{0}\in \R,\ \forall p\in \N, \ \forall x\geq x_{0}, \quad  {b}^{(p)}(x)\neq0 .\\
  \rm{b})&b'(x) =o(b_+-b(x)),\, \textrm{and} \,\forall p\in \N,\  {b}^{(p+1)}(x)=o({b}^{(p)}(x));\quad  x\to+\infty. &\\
  \rm{c})& ({b}')^2(x)=o({b}''(x)); \quad x \to +\infty. \\
  \end{array}
\ee
These hypothesis will be naturally satisfied for  magnetic fields that converge to $b_{+}$ as a negative power of $x$. Therefore, in the following sections we will exploit our results for this kind of magnetic fields (see assumption \eqref{E:HypB} below).  
  \begin{notation}
  \label{N:akco}
  Denote by 
$x_{k}:=a^{-1}(k)$, the point where the potential $(a(x)-k)^2$ attains its minimum; $b_{k}:=b(x_{k})$ and for $p\geq1$, $b_{k}^{(p)}:=b^{(p)}(x_{k})$. 

For $\sigma>0$  and $\tau>0$, 
\Bk define the function 
\begin{equation}
\label{E:defepsilon}
\epsilon_{\sigma,\tau}(k):=(b_{k}')^2+\sup_{x\in\Bk({\sigma} k,+\infty)}\{|b'b''(x)|+|b^{(3)}(x)|\}+e^{-\tau k^2}.
\end{equation}
For readibility, we will write $\epsilon(k):=\epsilon_{\sigma,\tau}(k)$.
\end{notation}

\begin{theorem}
\label{T:asymptband}Assume that  $b$ satisfies \eqref{b_bound} and  \eqref{b_bound2},  
then for any $n\in \N$, there exist $\sigma>0$ and $\tau>0$ such that 
\begin{equation}
\label{E:asymptotEn}
E_{n}(k)=b_{k}\Lambda_{n}+\gamma_{n}b_{k}^{-1}b_{k}''+O( \epsilon(k)),  \quad k\to \infty,\end{equation}
with $\Lambda_n:=2n-1$ and  $\gamma_{n}:=\tfrac{1}{4}(2n^2-2n+1)$.

Moreover, the derivatives have an asymptotic expansion that is the formal derivative of $E_{n}$, up to the remainder:
\begin{equation}
\label{E:asymptotEnp}
E_{n}'(k)=b_{k}^{-1}\Lambda_{n}b_{k}'+O( \epsilon(k)),\quad k\to \infty,\end{equation}
and 
\begin{equation}
\label{E:asymptotEpp}
E_{n}''(k)=b_{k}^{-2}\Lambda_{n}b_{k}''+O( \epsilon(k)),\quad k\to \infty.\end{equation}
In the above asymptotics, the remainders depends on $n$. 
\end{theorem}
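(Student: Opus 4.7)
The natural strategy is to treat $h(k)$ as a perturbation of a rescaled harmonic oscillator centered at the potential minimum $x_k$. I first introduce $z=\sqrt{b_k}(x-x_k)$, so $h(k)=b_k\,\tilde h_k$ with $\tilde h_k=-d^2/dz^2+W_k(z)$. Taylor expanding $a(x_k+y)-k=b_k y+\tfrac{1}{2}b_k'y^2+\tfrac{1}{6}b_k''y^3+\cdots$ and squaring gives
\[
W_k(z)=z^2+\frac{b_k'}{b_k^{3/2}}z^3+\Bigl(\frac{b_k''}{3b_k^2}+\frac{(b_k')^2}{4b_k^3}\Bigr)z^4+r_k(z),
\]
where $r_k$ collects $z^5$ and higher terms with coefficients involving $b^{(3)}$ or products of $b'$ and $b''$ evaluated between $x_k$ and $x$. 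An Agmon-type estimate then shows that eigenfunctions of $h(k)$ concentrate near $x_k$ at the scale $b_k^{-1/2}$, reducing the spectral analysis to the microscopic window $|z|\lesssim 1$; the contribution of the region $x\le\sigma k$ is only $O(e^{-\tau k^2})$ since $x_k\sim k/b_+\to+\infty$, producing the last term of $\epsilon(k)$.

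Next, I apply Rayleigh--Schr\"odinger perturbation theory at the simple eigenvalue $\Lambda_n$ of $H_{\mathrm{osc}}:=-d^2/dz^2+z^2$, whose normalized eigenfunctions I denote $\psi_n$. The $z^3$ term vanishes at first order by parity and gives $O((b_k')^2/b_k^3)$ at second order; the $z^4$ term contributes $\langle\psi_n,z^4\psi_n\rangle=3\gamma_n$ times its coefficient at first order (the explicit value $\tfrac{3}{4}(2n^2-2n+1)$ follows from a direct ladder-operator computation), producing, after multiplication by $b_k$, the correction $\gamma_n b_k^{-1}b_k''$ of \eqref{E:asymptotEn} together with a residual $O((b_k')^2/b_k^2)$ piece absorbed into $\epsilon$. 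Higher order contributions from $r_k$ are controlled by the $\sup$-term in $\epsilon(k)$ combined with Gaussian moments of Hermite functions.

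For the derivative asymptotics, observe that $\tfrac{d}{dk}b_k=b_k'/b_k$ and $\tfrac{d}{dk}(b_k''/b_k)=b^{(3)}(x_k)/b_k^2-b_k'b_k''/b_k^3=O(\epsilon(k))$, so formal differentiation of \eqref{E:asymptotEn} yields \eqref{E:asymptotEnp} and \eqref{E:asymptotEpp}. To justify this, I would either upgrade the perturbative scheme to control $\partial_k$ of the eigenfunctions and remainders uniformly, or argue directly via the Feynman--Hellmann identity $E_n'(k)=-2\langle u_k,(a-k)u_k\rangle$. In the latter case one must be careful: expanding $(a-k)$ against the symmetric density $|\psi_n|^2$ alone would give the wrong leading coefficient $\Lambda_n b_k'/(4b_k)$, and the antisymmetric first-order correction $\delta\phi=O(b_k'/b_k^{3/2})$ to the eigenfunction --- whose $\psi_{n\pm 1}$ components are read off from explicit matrix elements of $z^3$ and $z$ in the Hermite basis --- must be included; the two contributions combine to the required $\Lambda_n b_k'/b_k$. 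An analogous identity for $E_n''(k)$, together with the leading cancellation between the $\psi_{n\pm 1}$ terms of $\sum_{m\ne n}|\langle u_m,(a-k)u_n\rangle|^2/(E_n-E_m)$, yields \eqref{E:asymptotEpp}.

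The main obstacle is uniform control of the remainder at the precision dictated by $\epsilon(k)$, since \eqref{b_bound2} provides only the qualitative relations $b^{(p+1)}=o(b^{(p)})$ and $(b')^2=o(b'')$ rather than explicit decay rates. One has to simultaneously handle three scales --- the microscopic oscillator window $|z|\lesssim 1$ where Rayleigh--Schr\"odinger applies, the intermediate region where $b$ is essentially $b_+$ and the Taylor remainders are controlled by the $\sup$-term of $\epsilon$, and the macroscopic far region $x\le\sigma k$ whose Gaussian tail contributes the $e^{-\tau k^2}$ piece --- and obtain matching control for $\partial_k$-derivatives for the derivative asymptotics. This is the delicate technical step.
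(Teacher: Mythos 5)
Your proposal follows essentially the same route as the paper: rescale about $x_k$ via $t=b_k^{1/2}(x-x_k)$, Taylor-expand the rescaled potential to fourth order with the same coefficients $\alpha_1,\alpha_2$, apply Rayleigh--Schr\"odinger / quasi-mode perturbation theory at $\Lambda_n$ (the paper controls the remainder by splitting $|t|\lessgtr\sigma_0 k$ and invoking the Helffer--Sj\"ostrand spectral-gap estimate where you invoke Agmon decay, but these serve the same purpose), and then derive \eqref{E:asymptotEnp}--\eqref{E:asymptotEpp} from the Feynman--Hellmann identity after substituting the quasi-mode, retaining the antisymmetric $\varphi_1$-correction. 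One small numerical slip in your aside: keeping only the symmetric density $|\psi_n|^2$ in the FH integral gives $-\tfrac12\Lambda_n b_k'/b_k$ (not $\Lambda_n b_k'/(4b_k)$), but your conceptual point that the first-order correction to the eigenfunction must be included is exactly what the paper does through the term $2\langle t\varphi_0,\varphi_1\rangle$ in $T_1$.
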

\begin{remark}
It is possible to precise the range of values of the constants involved in Definition \eqref{E:defepsilon}: The theorem holds true at least for all constants $(\sigma,\tau)\in (0,\frac{1}{b_{+}}-\frac{\sigma_{0}}{(b_{+})^{1/2}})\times (0,\frac{\sigma_{0}^2}{2})$, where $\sigma_{0}\in (0,\frac{1}{(b_{+})^{1/2}})$.
\end{remark}

We exploit Theorem \ref{T:asymptband} in the following model case: 
\begin{equation}
\label{E:HypB}
\exists x_{0} >0, \forall x \geq x_{0}, \quad b_{+}-b(x)=\frac{1}{x^{M}} \ \ \mbox{with} \ \ M>0.
\end{equation}
Here we again impose a condition much stronger than necessary in order to write the results of the following sections concisely.

\begin{corollary}
\label{C:asymptBexplicit}
Assume \eqref{b_bound} and \eqref{E:HypB}. Set $\eta=\min(2M+2,M+3)$, then, as $k\to+\infty$: 
$$E_{n}(k)-\overline{\mathcal E}_n=-\frac{\Lambda_{n}b_{+}^{M}}{k^{M}}+O\left(\frac{1}{k^{M+2}}\right).$$
$$E_{n}'(k)=M\frac{\Lambda_{n}b_{+}^{M}}{k^{M+1}}+O\left(\frac{1}{k^{\eta}}\right) \ \ \mbox{and} \ \ E_{n}''(k)=-M(M+1)\frac{\Lambda_{n}b_{+}^{M}}{k^{M+2}}+O\left(\frac{1}{k^{\eta}}\right).$$
\end{corollary}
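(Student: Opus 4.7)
The plan is to verify the hypotheses of Theorem~\ref{T:asymptband} under \eqref{E:HypB} and then substitute closed-form expressions for $b_k$, $b_k^{(p)}$, and $\epsilon(k)$ into the asymptotic formulas \eqref{E:asymptotEn}--\eqref{E:asymptotEpp}. First I would verify \eqref{b_bound2}: for $x\geq x_{0}$, a direct computation gives $b^{(p)}(x)=(-1)^{p-1}M(M+1)\cdots(M+p-1)\,x^{-M-p}$, so $b^{(p)}$ never vanishes and the required comparisons $b^{(p+1)}=o(b^{(p)})$, $(b')^{2}=o(b'')$, and $b'=o(b_{+}-b)$ all reduce to obvious polynomial decay inequalities.

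Next I would locate $x_{k}=a^{-1}(k)$ asymptotically. Since $a'(x)=b(x)\to b_{+}$, inverting the relation $a(x_{k})=k$ yields $x_{k}=k/b_{+}+o(k)$; in particular for any fixed $\sigma\in(0,1/b_{+})$ one has $x_{k}>\sigma k$ once $k$ is large, which places $x_{k}$ in the regime where $b(x)=b_{+}-x^{-M}$ holds exactly. A Taylor expansion of the powers $x_{k}^{-M-p}$ around $k/b_{+}$ then produces
\begin{align*}
b_{k} &= b_{+}-b_{+}^{M}k^{-M}+O(k^{-M-1}), \\
b_{k}' &= M\,b_{+}^{M+1}k^{-M-1}+O(k^{-M-2}), \\
b_{k}'' &= -M(M+1)\,b_{+}^{M+2}k^{-M-2}+O(k^{-M-3}).
\end{align*}
The remainder $\epsilon(k)$ from \eqref{E:defepsilon} is then estimated piecewise: $(b_{k}')^{2}=O(k^{-2M-2})$, $\sup_{x>\sigma k}|b^{(3)}(x)|=O(k^{-M-3})$, $\sup_{x>\sigma k}|b'b''(x)|=O(k^{-2M-3})$, and $e^{-\tau k^{2}}$ is negligible, giving $\epsilon(k)=O(k^{-\eta})$ with $\eta=\min(2M+2,M+3)$.

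Substituting into Theorem~\ref{T:asymptband} then concludes: the $\Lambda_{n}b_{k}$ term of \eqref{E:asymptotEn} yields the leading $-\Lambda_{n}b_{+}^{M}/k^{M}$ after subtracting $\overline{\cE}_{n}=\Lambda_{n}b_{+}$, the correction $\gamma_{n}b_{k}^{-1}b_{k}''$ is itself $O(k^{-M-2})$, and the theorem's remainder is $O(k^{-\eta})$, all absorbed in the advertised error; for the derivatives the leading term $\Lambda_{n}b_{k}^{-1}b_{k}'$ simplifies to $M\Lambda_{n}b_{+}^{M}/k^{M+1}$ (and analogously for $E_{n}''$), with error $O(\epsilon(k))=O(k^{-\eta})$ read off directly from \eqref{E:asymptotEnp}--\eqref{E:asymptotEpp}. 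The main obstacle is the careful bookkeeping of subleading contributions coming from the offset between $x_{k}$ and $k/b_{+}$ when expanding the powers $x_{k}^{-M-p}$, and verifying that none of these corrections exceeds the claimed error orders.
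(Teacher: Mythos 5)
Your approach mirrors the paper's implicit proof (the paper only says the corollary is a ``direct consequence'' of Theorem~\ref{T:asymptband}): verify \eqref{b_bound2} under \eqref{E:HypB}, write $b_k^{(p)}$ and $\epsilon(k)$ as powers of $k$, and substitute. The verification of \eqref{b_bound2} and the estimate $\epsilon(k)=O(k^{-\eta})$ are fine, and you correctly flag the offset between $x_k$ and $k/b_+$ as the delicate point. But the sketch does not close this gap, and in fact the bookkeeping changes the answer.

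Theorem~\ref{T:asymptband} gives $E_n(k)-\overline{\cE}_n=-\Lambda_n x_k^{-M}+O(k^{-M-2})$, so the corollary requires $x_k^{-M}=(b_+/k)^{M}+O(k^{-M-2})$. However, for $x\ge x_0$ one has $a(x)=b_+x+\rho_\infty+\tfrac{x^{1-M}}{M-1}$ when $M>1$ (while $a(x)-b_+x$ diverges when $M\le1$), with $\rho_\infty:=\int_0^\infty\bigl(b(t)-b_+\bigr)\,\rd t<0$. Inverting $a(x_k)=k$ gives $x_k=k/b_+-\rho_\infty/b_+ +O(k^{1-M})$ for $M>1$, and hence
\[
x_k^{-M}=b_+^{M}k^{-M}-M\rho_\infty b_+^{M}k^{-M-1}+O\!\left(k^{-\min(M+2,\,2M)}\right),
\]
with a nonzero $k^{-M-1}$ coefficient. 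Your own line $b_k=b_+-b_+^Mk^{-M}+O(k^{-M-1})$ records exactly this correction, but the next sentence claims it is ``absorbed in the advertised error'' $O(k^{-M-2})$ --- that step is false. The same offset contributes a nonvanishing $k^{-M-2}$ term to $b_k^{-1}b_k'$, which exceeds $\epsilon(k)=O(k^{-\eta})$ and so spoils the stated remainder for $E_n'$ as well (the remainder for $E_n''$ is, by contrast, compatible with the offset). Moreover, the estimate $x_k=k/b_+ +o(k)$ that you actually derive is far too weak to support any of the power expansions of $b_k^{(p)}$ you write down; one needs at least $x_k=k/b_+ +O(1)$, which itself fails for $M\le1$. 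Either the remainders for $E_n$ and $E_n'$ in the corollary must be relaxed to $O(k^{-M-1})$ and $O(k^{-M-2})$ respectively, or an argument must be supplied for why the $\rho_\infty$-correction cancels; your sketch provides neither.
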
\Bk
Although this corollary is a direct consequence of Theorem \ref{T:asymptband}, it can also be seen as the first terms of the asymptotics provided by the harmonic approximation for a semi-classical Schr\"odinger operator (see Remark \ref{R:semiclassical}).

\subsection{Construction of quasi-modes}
\label{SS:QM}
 In order to prove Theorem \ref{T:asymptband} we will apply the standard procedure of constructing quasi-modes, for  $h(k)$  in \eqref{sep8} which  is a Sturm-Liouville operator. The potential $(a(x)-k)^2$ vanishes only at the point $x=x_{k}=a^{-1}(k)$. 
Near this point, its  formal Taylor expansion is $b_{k}^2(x-x_{k})^2+O(x^3)$, where the remainder depends on $k$. Therefore, guided by the semiclassical approximation, we perform the change of variables  
\bel{E:CV}
t=b_{k}^{1/2}(x-x_{k}),
\ee and the operator $h(k)$ becomes
\bel{10apr17}b_{k}\left(-\partial_{t}^2+w(t,k)\right), \quad t\in \R,\ee 
where 
$$w(t,k):=b_{k}^{-1}(a(b_{k}^{-1/2}t+x_{k})-k)^2.$$
We compute the derivatives of $w$, with the notation $x=b_{k}^{-1/2}t+x_{k}$:
\begin{equation*}
\begin{aligned}
&w'(t,k)=2b_{k}^{-3/2}(a(x)-k)b(x) \\
&w^{(2)}(t,k)=2b_{k}^{-2}(b(x)^2+(a(x)-k)b'(x)))\\
&w^{(3)}(t,k)=2b_{k}^{-5/2}(3b(x)b'(x)+(a(x)-k)b''(x))\\
&w^{(4)}(t,k)=2b_{k}^{-3}(3b'(x)^2+4b(x)b''(x)+(a(x)-k)b^{(3)}(x)) \ .
\end{aligned}
\end{equation*}
In consequence, the Taylor expansion of the normalized potential $w$  near $t=0$ writes 
\bel{11apr17a}w(t,k)=t^2+\alpha_{1}(k)t^3+\alpha_{2}(k)t^4+R_{4}(t,k),\ee
with 
\begin{equation}
\label{E:defalpha}
\alpha_{1}(k)=b_{k}^{-3/2}b_{k}' \ \ \mbox{and} \ \ \alpha_{2}(k)=\frac{1}{4}b_{k}^{-3}(b_{k}')^2+\frac{1}{3}b_{k}^{-2}b_{k}''.
\end{equation}

\begin{remark}
\label{R:semiclassical}
The tools used  here are closed to those of the semi-classical harmonic approximation (\cite{DiSj99}). Notice that by trying the scaling $X=x_{k}x$ and setting $\hbar=x_{k}^{-2}$, we are led (up to factor) to an operator of the form $-\hbar^2\partial_{X}^2+W(X,h)$, where the potential $W$ has a unique non degenerate minimum, and a Taylor expansion near the minimum in powers of $X$, depending on $h$ but also on  $b^{(p)}_{k}$. The hypotheses \eqref{b_bound2} express then  conditions of the potential $W$ so that this operator enters indeed in the framework of harmonic approximation. Under condition \eqref{E:HypB}, the potential $W$ has an expansion with powers of $X$, and with various powers of  $\hbar$  (and therefore in $k^{-1}$), but due to the different scales, it is not clear that a full asymptotic expansion in power of $k^{-1}$ exists. 
\end{remark}
From \eqref{10apr17} and \eqref{11apr17a} w\Bk e are  led to compute the spectrum of the formal operator $h_0+w_1+w_2+R_{4}$ with 
\bel{13mar17} h_0=-\partial_{t}^2+t^2, \ \ w_1=\alpha_{1}t^3, \ \  w_2=\alpha_{2}t^4, \ \ R_{4}(t,k)=w(t,k)-(t^2+\alpha_{1}t^3+\alpha_{2}t^4).\ee
 We will consider these operators formally, and construct a quasi eigenpair of the form 
\begin{equation}
\label{E:anzatz}
(\mu_{0}+\mu_{1}+\mu_{2},\varphi_{0}+\varphi_{1}+\varphi_{2}), 
\end{equation}
where this ansatz is adapted to perturbation theory, as follows:
\subsubsection{The engine}
 Let $\Psi_{n}$ be the $n$-th normalized Hermite's function, starting from $n=1$ (see for example \cite{abst}).



\Bk
The order 0 term leads to $(h_0-\mu_{0})\varphi_{0}=0$, which is solved by taking  
$$\mu_{0}=\Lambda_{n} \ \ \mbox{and} \ \ \varphi_{0}=\Psi_{n}.$$
\subsubsection{First order}
We look for a pair $(\mu_{1},\varphi_{1})$ such that $(h_0-\mu_{0})\varphi_{1}=(\mu_{1}-w_1)\varphi_{0}$.  The Fredholm alternative writes $\langle (\mu_{1}-w_1)\Psi_{n},\Psi_{n} \rangle=0$, therefore, using the symmetry of $\Psi_{n}$  and the oddness of $w_{1}$, \Bk we get 
 $$\mu_{1}=0 \ \ \mbox{and} \ \ \varphi_{1}=-(h_0-\mu_{0})^{-1}(w_1\Psi_{n}).$$
Taking into account that 
\begin{equation}
\label{E:decomposetPsi}
t\Psi_{n}(t)=\sqrt{\frac{n-1}{2}}\Psi_{n-1}+\sqrt{\frac{n}{2}}\Psi_{n+1},
\end{equation}
we obtain
\begin{multline}
\label{E:decomposet3Psi}
t^3\Psi_{n}=2^{-3/2}\big(\sqrt{(n-1)(n-2)(n-3)}\Psi_{n-3}+3(n-1)\sqrt{n-1}\Psi_{n-1}
\\
+3n\sqrt{n}\Psi_{n+1}+\sqrt{n(n+1)(n+2)}\Psi_{n+3}\big).
\end{multline}
Hence
\begin{multline}
\varphi_{1}=-\alpha_{1}(k)2^{-5/2}\Big(-\tfrac{1}{3}\sqrt{(n-1)(n-2)(n-3)}\Psi_{n-3}-3(n-1)\sqrt{n-1}\Psi_{n-1}
\\
+3n\sqrt{n}\Psi_{n+1}+\tfrac{1}{3}\sqrt{n(n+1)(n+2})\Psi_{n+3}\Big).\end{multline}

\subsubsection{Second order}
We have that $w_1\varphi_{1}=O(\alpha_{1}^2)$, therefore we shall give priority to the term $w_2\varphi_{0}$ since $\alpha_{1}^2=o(\alpha_{2})$, see \eqref{b_bound2}. These considerations bring us to solve the equation 
$(h_0-\mu_{0})\varphi_{2}=(\mu_{2}-w_2)\varphi_{0}, $
and as above we get $\mu_{2}=\langle w_2\varphi_{0},\varphi_{0}\rangle$ and $\varphi_{2}=(h_0-E_{0})^{-1}(\mu_{2}-w_2)\varphi_{0}.$ Computations using \eqref{E:decomposetPsi} and \eqref{E:decomposet3Psi} provides
$$\langle t^4\Psi_{n},\Psi_{n} \rangle=\langle t^3\Psi_{n},t\Psi_{n} \rangle=\tfrac{3}{4}(2n^2-2n+1):=\gamma_{n},$$
implying  that $$\mu_{2}=\frac{3}{4}(2n^2-2n+1)\alpha_{2}.$$ Moreover, $\varphi_{2}$ has the form
$$\varphi_{2}=\alpha_{2}(k)\sum_{p=-2}^{2}c_{p}\Psi_{n+2p},$$
where, by construction $c_{0}=0$ and
\begin{equation}
\forall n\geq1, \, 
\left\{
\begin{aligned}
&c_{-2}=-\tfrac{1}{32}\sqrt{(n-1)(n-2)(n-3)(n-4)}\\
&c_{-1}=-\tfrac{1}{16}\sqrt{(n-1)(n-2)}(4n-6)\\
&c_{1}=\tfrac{1}{16}\sqrt{n(n+1)}(4n+2)\\
&c_{2}=\tfrac{1}{32}\sqrt{n(n+1)(n+2)(n+3)} \ .
\end{aligned}
\right.
\end{equation}

\subsection{ Application of the spectral theorem and proof of the asymptotics of the band function}

Denote by $(\mu,v^{\qm}_{n}(\cdot,k))$ the quasimodes constructed above according to the ansatz \eqref{E:anzatz}. Note that $v^{\qm}_{n}$ is obviously in the domain of $-\partial_{t}^2+w$. Then, by construction 
$$((-\partial_{t}^2+w)-\mu) v^{\qm}_{n}=w_1\varphi_{1}+w_1\varphi_{2}+w_2\varphi_{1}-\mu_{2}\varphi_{1}-\mu_{2}\varphi_{2}+{\omega_2\varphi_2}+R_{4}v^{\qm}_{n}.$$
Let $\eta_{n}(k)$ be the norm of the above quantity. Accordingly  
\begin{equation}
\label{E:defgamma}
\eta_{n}(k)=\|R_{4}v^{\qm}_{n}\|+O(\alpha_{1}^2)+O(\alpha_{1}\alpha_{2})=\|R_{4}v^{\qm}_{n}\|+O(\alpha_{1}^2).
\end{equation}
where we have used condition \eqref{b_bound2}.
 Recalling  that $R_{4}$ is the fourth order Taylor remainder of $w$, \Bk we get a constant $C>0$ such that
\begin{align*}
\forall k\in \R, \quad \| R_{4}v^{\qm}_{n}(\cdot,k) \|^2 & \leq C \int_{\R}\left(\sup_{|s|<t} |w^{(5)}(s,k) | \,   | t^5  |\right)^2v^{\qm}_{n}(t,k)^2 \rd t,
\end{align*}
with 
$$w^{(5)}(t,k)=2b_{k}^{-7/2}(10b'(x)b''(x)+5b(x)b^{(3)}(x)+(a(x)-k)b^{(4)}(x)).$$
 Let $\sigma_{0}\in (0,\frac{1}{(b_{+})^{1/2}})$, we consider the above integral in two pieces, $|t|<\sigma_{0} k$ and $|t|>\sigma_{0} k$. First we treat the part $|t|<\sigma_{0} k$, where  the potential $w^{(5)}$ is small: 
We have $|a(x)-k|\leq c |t|$ for all $(t,k)\in \R\times \R$ with $c>0$ a constant, see \eqref{E:CV}. Therefore, there exists a polynomial  $P_{n}$ (whose coefficients do not depend on $k$) such that
\begin{align*}
I_{1}:=&\int_{|t|<\sigma_{0} k}\left(\sup_{|s|<t} |\Bk w^{(5)}(s,k) |\Bk    |\Bk t  |\Bk    ^5 \right)^2v^{\qm}_{n}(t,k)^2 \rd t  
\\
 \leq& C \left(\sup_{|t|<\sigma_{0} k}( |\Bk b'b''   |\Bk  +  |\Bk  b^{(3)}   |\Bk  +   |\Bk   b^{(4)}   |\Bk   )(x) \right)^2 \int_{|t|<\sigma_{0} k}P_{n}(t)e^{-t^2} \rd t. 
 \end{align*}
Let $\sigma\in(0,\frac{1}{b_{+}}-\frac{\sigma_{0}}{(b_{+})^{1/2}})$. Recalling \eqref{E:CV}, we see that $|t| < \sigma_{0} k$ is equivalent to $|x-x_{k}|\leq\frac{\sigma_{0}}{b_{k}^{1/2}}k$. Using that $x_{k}\geq \frac{k}{b_{+}}$,
we obtain  $x> (\frac{1}{b_{+}}-\frac{\sigma_{0}}{b_{k}^{1/2}})k\geq\sigma k$ for $k$ large enough, since $b_{k} \to b_{+}$ as $k\to\infty$. Then, by assumption on $b$, we have $b^{(4)}=o(b^{(3)})$ and we find another constant $C>0$ such that for $k$ large enough
$$I_{1} \leq C \sup_{(\sigma k,+\infty)}( | b'b''    |+   |  b^{(3)}  |  )^2.$$
For the part $|t|>\sigma_{0} k$, we will use that the quasi-mode has exponential decay. By similar estimates as above, we obtain 
 \begin{align*}
 I_{2}:=&\int_{|t|>\sigma_{0} k}\left(\sup_{|s|<t}w^{(5)}(s,k)t^5 \right)^2v^{\qm}_{n}(t,k)^2 \rd t
 \\
  \leq &C\int_{|t|>\sigma_{0} k}P_{n}(t)e^{-t^2} \rd t \leq C( \tau ) e^{-\tau k^2},
 \end{align*}
 for all $\tau \in (0,\sigma_{0}^2)$. Therefore, using that $I_{1}+I_{2}=\| R_{4}v^{\qm}_{n} \|^2$, we get
\begin{equation}
\label{E:estimeR4}
\| R_{4}v^{\qm}_{n} \|= O(\sup_{({\sigma} k,+\infty)}(b'b''+b^{(3)})+e^{-\tau \frac{k^2}{2}}).
\end{equation} \Bk
Combining \eqref{E:estimeR4}  with \eqref{E:defgamma}, and recalling the definition of  $\epsilon$  in \eqref{E:defepsilon}, we can see that
\begin{equation}
\label{E:estimeeps}
\eta_{n}(k)=O(\epsilon(k)). 
\end{equation} 
Finally, noticing that $\|v^{\qm}_{n}\|=1+o(1)$   and applying  the spectral theorem
$$|E_{n}(k)-b_{k}\mu| =O(\epsilon(k)), $$
which together with \eqref{b_bound2} c) implies the asymptotics \eqref{E:asymptotEn}.

Denote by $q_{k}$ the quadratic form associated with $h(k)$. Each $E_{n}(k)$ is a simple eigenvalue of $h(k)$, and since each $E_{n}(k)$ converge to $\overline{{\mathcal E}_{n}}$ from  below, there exists $c>0$ and $k_{n}$ such that
$$\forall k\geq k_{n}, \ \ \dist(E_{n}(k),\sigma(h(k))\setminus\{E_{n}(k)\}) \geq c.$$ This spectral gap allows us to precise that the constructed quasi-modes are closed to the eigenfunctions of $h(k)$ in the following sense (see \cite[Proposition 2.5]{HellSj84}):
\Bk\begin{equation}
\label{E:spectralTbis}
\|u_{n}-u^{\qm}_{n}\|+\sqrt{q_{k}(u_{n}-u^{\qm}_{n})} =O(\epsilon(k)),
\end{equation}
where $u^{\qm}_{n}$ is the quasi-mode in the original variable, i.e.,
\begin{equation}
\label{E:defvn}
u^{\qm}_{n}(x,k):= b_{k}^{1/4}  v^{\qm}_{n}(b_{k}^{1/2}(x-x_{k}),k),
\end{equation}
 and $u_{n}$ is a normalized eigenfunction of $h(k)$ associated with $E_{n}(k)$.

\subsection{Asymptotics of the  derivatives}

The standard Feynman-Hellman (FH) formula writes 
\begin{align*}
E_{n}'(k)=&-2\int_{\R}(a(x)-k)u_{n}(x,k)^2 \rd x
\\
=&-2\int_{\R}(a(x)-k)u^{\qm}_{n}(x,k)^2 \rd x+2\int_{\R}(a(x)-k)(u_{n}(x,k)^2-u^{\qm}_{n}(x,k)^2)\rd x.
\end{align*}
The last term is easily controlled:
\begin{align*}
 & \left| \int_{\R}(a(x)-k)(u_{n}(x,k)^2-u^{\qm}_{n}(x,k)^2)\rd x \right|
 \\
 &\leq \left(\int_{\R}(a(x)-k)^2(u_{n}(x,k)-u^{\qm}_{n}(x,k))^2\rd x\int_{\R}(u_{n}(x,k)+u^{\qm}_{n}(x,k))^2\rd x\right)^{1/2}
 \\
 &\leq C q_{k}(u_{n}-u^{\qm}_{n})^{1/2}, 
\end{align*}
and is therefore $O(\epsilon(k))$, see \eqref{E:spectralTbis}. Now we have to compute the main term.
We do the same change of variable \eqref{E:CV} as above and we expand the potential:
$$a(tb_{k}^{-1/2}+x_{k})-k=b_{k}^{1/2}\left(t+\beta_{1}t^2+\beta_{2}t^3+\tilde{R}_{3}\right),$$
with $\beta_{1}=\frac{\alpha_{1}}{2}$ and $\beta_{2}=\frac{\alpha_{2}}{2}-\frac{\alpha_{1}^2}{8}$, see \eqref{E:defalpha}. Recall that the quasi mode in these variables writes $v^{\qm}_{n}=\varphi_{0}+\varphi_{1}+\varphi_{2}$, where $\varphi_{p}$ are defined in Subsection \ref{SS:QM}. Then we get
\begin{align*}
-2&\int_{\R}(a(x)-k){u}^{\qm}_{n}(x,k)^2 \rd x
\\
&=-2b_{k}^{1/2}\int_{\R}\left(t+\beta_{1}t^2+\beta_{2}t^3+\tilde{R}_{3}(t)\right)(\varphi_{0}(t)+\varphi_{1}(t)+\varphi_{2}(t))^2 \rd t.
\end{align*}
Expanding the above integrand, we use that some terms are odd functions and their integral cancels, obtaining 
\begin{equation}
\label{expandEpetreste}
E_{n}'(k)=-2b_{k}^{1/2}\left(T_{1}+T_{2}+T_{3}\right)+O(\epsilon_{n}(k)),
\ee
where 
$$T_{1}=\int_{\R}(t+\beta_{1}t^2)(\varphi_{0}(t)+\varphi_{1}(t))^2\rd t,$$
$$T_{2}=\int_{\R}2t\varphi_{1}\varphi_{2}+2\beta_{1}t^2\varphi_{0}\varphi_{2}+\beta_{1}t^2\varphi_{2}^2+2\beta_{2}t^3\varphi_{1}(\varphi_{0}+\varphi_{2})\rd t, \ \ \mbox{and} \ \ T_{3}=\int_{\R}\tilde{R}_{3}(t)v^{\qm}_{n}(t,k)^2\rd t.$$
The term $T_1$ satisfies 
$$T_{1}=2\langle t \varphi_{0},\varphi_{1}\rangle +\beta_{1}\|t \varphi_{0}\|^2+O(\alpha_{1}^2),$$
and it is known that $\|t \varphi_{0}\|^2=\frac{\Lambda_{n}}{2}$. More tedious compuations provides $\langle t \varphi_{0},\varphi_{1}\rangle=-\frac{3}{8}\alpha_{1}\Lambda_{n}$, so that 
$$T_{1}=-\tfrac{1}{2}\alpha_{1}\Lambda_{n}+O(\alpha_{1}^2).$$
Moreover,  using our assumptions on the magnetic field, $T_{2}=O(\alpha_{1}\alpha_{2})=O(b_{k}'b_{k}'')=o((b_{k}')^2).$ Finally, we estimate the term $T_{3}$ using  the same  arguments leading to \eqref{E:estimeR4}, which give us    $T_{3}=O(\sup_{(\widetilde{\sigma} k,+\infty)}b^{(3)}(k)+e^{-\tau k^2})$. Equation 
  \eqref{E:asymptotEnp} is obtained by combining these last estimates with \eqref{expandEpetreste}. 

Concerning the second derivative, we have the formula:
$$E_{n}''(k)=-2\left(-1+2\int_{\R}(a(x)-k)\partial_{k}u_{n}(x,k) u_{n}(x,k) \rd x\right).$$
Note that $\partial_{k}u_{n}$ satisfies the following ODE: 
$$(h(k)-E_{n}(k))\partial_{k}u_{n}(x,k)=(E_{n}'(k)+2(a(x)-k))u_{n}(x,k). $$
Due to the FH formula, the r.h.s. is orthogonal to $u_{n}$ and therefore we get 
$$\partial_{k}u_{n}(\cdot,k)=(h(k)-E_{n}(k))^{-1}((E_{n}'(k)+2(a(\cdot)-k))u_{n}(\cdot,k)).$$
We are left with the task of estimating 
\bel{13mar17a}\int_{\R}(a(x)-k)u_{n}(x,k) (h(k)-E_{n}(k))^{-1}((E_{n}'(k)+2(a(\cdot)-k))u_{n})(x) \rd x.\ee
We proceed in the same change of variables and Taylor expansion as above and we use resolvent formulas in order to apply $(h(k)-E_{n}(k))^{-1}$. We don't give all details here since these computations are rather tedious and redundant with those given above. First, arguing as before, we can replace $u_n$ by $u_n^{\rm qm}$ in formula \eqref{13mar17a}.  The factor $(a(x)-k){u}^{\qm}_{n}(x,k)$ is easily expressed  with Hermite's function as in the previous section. Further, we get 
 (the terms are ordered and grouped by decreasing order as $k\to+\infty$):
\medskip
\bel{E:expandencore}\ba{ll}
&(2(a(x)-k)+E_{n}'(k))u_{n}^{\qm}(x,k)
\\
=& 2b_{k}^{1/2}t\varphi_{0}+\left(E_{n}'\varphi_{0}+2b_{k}^{1/2}t\varphi_{1}+b_{k}^{1/2}\alpha_{1}t^2\varphi_{0}\right)+\left(2b_{k}^{1/2}t\varphi_{2}+2b_{k}^{1/2}\beta_{2}t^3\varphi_{0}\right)+R_{3}(t,k)
\\
:=&\tau_{0}+\tau_{1}+\tau_{2}+R_{3},
\ea\ee
with obvious notations, and where $R_{3}$ is a remainder. We want to apply the resolvent $(h(k)-E_{n}(k))^{-1}$ to the above expression.
Using the  resolvent  formula we obtain formally 
\begin{align*} &(h(k)-E_{n}(k))^{-1}
\\
&=b_{k}^{-1}(h_{0}-\Lambda_{n})^{-1}\left( \Id-( b_{k}^{-1}E_n(k)-\Lambda_n-(\alpha_{1}t^3+\alpha_{2}t^{4}+R_{4}))(h_{0}-\Lambda_{n})^{-1} \right)^{-1}
\\
&=b_{k}^{-1}(h_{0}-\Lambda_{n})^{-1}\left( \Id+( b_{k}^{-1}E_n(k)-\Lambda_n- (\alpha_{1}t^3+\alpha_{2}t^{4}+R_{4}))(h_{0}-\Lambda_{n})^{-1} +O(\alpha_{1}^2)\right)
\end{align*}
This formula is useful for applying the resolvent on $\tau_{0}$ given in \eqref{E:expandencore}. For the terms $\tau_{1}$ and $\tau_{2}$, which are respectively $O(\alpha_{1})$ and $O(\alpha_{2})$, we can apply directly $b_{k}^{-1}(h_{0}-\Lambda_{n})^{-1}$ instead of $(h(k)-E_{n}(k))^{-1}$ since the remainder will be $O(\epsilon)$.  

As above, we express everything in the basis of Hermite's functions, neglecting all the term that are controlled by $\alpha_{1}^2$. The above constructions insures that there is no $\varphi_{0}$ in each of these terms, so that it is possible to apply $(h_{0}-\Lambda_{n})^{-1}$. The same estimates as above lead to the asymptotics of $E_{n}''(k)$ announced in Theorem \ref{T:asymptband}. 
\Bk





\section{States localized near the thresholds}
\label{S:bulkstates}
 To begin we introduce some notations.  Let $\cU_{n}:L^2(\R)\mapsto L^2(\R\times \R)$ be the isometry defined by $(\cU_{n} w)(x,k):=w(k)u_{n}(x,k)$. Its adjoint is $(\cU_{n}^{*} v)(k)=\langle u_{n}(\cdot,k),v(\cdot,k) \rangle$, and we consider the projectors $\Pi_{n}:=\cU_{n}\cU_{n}^{*}$ and $\pi_{n}:=\cF^{*} \Pi_{n} \cF$. We know that $\sum_{n \geq1}\pi_{n}=\Id$. 
 
 Let $I$ be a Borel subset of $ \sigma(H_0)$. A function $\varphi\in \Dom(H_0)$ is localized in $I$ if it satisfies $\dP_{I}(H_0)\varphi=\varphi$, where $\dP_{I}(H_0)$ is the spectral projector  for $H_0$ associated with $I$. It is easy to see that $\varphi$ is localized in $I$ \Bk if and only if for all $n\in \N$, $\supp(\cU_{n}^{*}\cF \varphi)\subset E_{n}^{-1}(I)$. Assume that $I$ is bounded and that $I$ contains exactly one threshold $\overline{\mathcal E}_n$ (or that the distance from $I$ to $\overline{\mathcal E}_n$ is small). Then all $\pi_{m}\varphi$ with $m\neq n$ have standard {\it edge state} properties (see \cite{hissocc2}), whereas $\pi_{n}\varphi$ would be called the \emph{bulk} component of $\varphi$ in the terminology of \cite{dBiPu99,hispopsoc} (notice that the {\it bulk-edge} terminology is  a priori \Bk  irrelevant here since the system has no boundary).  The edge states are known to be functions localized in a bounded region (in the $x$ direction) of the plane, whose current (representing their transport properties in the $y$ direction) is bounded from below by a constant that depends only of the energy interval $I$, see \eqref{E:estimatecurrentrough} below. On the contrary, current of bulk states cannot be bounded from below, and they  are not localized in general.
 
 The appearance of bulk states is linked to one of the main original property of this model: the band functions are not proper (therefore this model does not enter the theory developed in \cite{GerNi98}). Indeed, if $I$ contains a threshold $\overline{\cE_{n}}$, the set $E_{n}^{-1}(I)$ is not bounded (or gets large if $I$ gets close to $\overline{\mathcal E}_n$). To treat the properties of quantum states localized in energy near a given threshold $\overline{\cE}_n$, it is necessary to have good understanding of the corresponding band function at infinity. In this section we will describe some properties of  the bulk component of  \Bk $\pi_{n}\varphi$, namely we will show that its current is small and that it is localized in the zone of configuration space where $H_0$ is closed to the Landau Hamiltonian with constant magnetic field $b_{+}$.

\subsection{Estimates on the current}
The current operator is $J_{y}:=-i[H_0,y]$, defined as a self adjoint operator on $\Dom(H_0)$. Easy computations show that $J_{y}=-i\partial_{y}-a$. We introduce some notations in order to link $J_{y}$ with the derivative of the band functions. 

The Feynman-Hellmann formula shows that the velocity operator satisfies
 $$\langle J_{y} \pi_{n} \varphi,\pi_{n} \varphi \rangle = \langle E_{n}'\cU_{n}^{*}\cF \varphi,\cU_{n}^{*}\cF \varphi  \rangle, $$
 see \cite{manpu}.
Notice that this form is reminiscent of \cite[Section 5.2]{Yaf08}, stating that in a Fourier basis adapted to the  diagonalization  of $H_{0}$, the velocity operator is unitarily equivalent to the multiplication by the derivative of the band functions.

Let $I\subset \sigma(H_0)$ and let $\varphi$ be such that $\dP_{I}(H_0)\varphi=\varphi$. Then the above identities yield
\begin{equation}
\label{E:estimatecurrentrough}
 \inf_{E_{n}^{-1}(I)}E_{n}'  \leq \frac{\langle J_{y} \pi_{n}\varphi,\pi_{n}\varphi \rangle }{\|\pi_{n}\varphi\|^2}\leq \sup_{E_{n}^{-1}(I)}E_{n}'. 
 \end{equation}
Therefore, as explained in \cite{manpu,hissocc2}, a state $\pi_n\varphi$ localized in energy away from  the  thresholds $\overline{\mathcal{E}}_n$  and $\underline{\mathcal{E}}_n$    has a positive current, whereas when $I$ contains  such a threshold,  no lower bound on the current is available.

Define $\varrho_n:(0,\overline{\mathcal E}_n-\underline{\mathcal E}_n)\to \R$,  as the inverse function of $\overline{\mathcal E}_n-E_n$.  Under the assumption \eqref{E:HypB}, we deduce from Corollary \ref{C:asymptBexplicit} the following asymptotics, as $\delta\to0$:
\bel{E:asymptvarrho}
\varrho_{n}(\delta)=\alpha\delta^{-\frac{1}{M}}+O(\delta^{\frac{1}{M}}) \ \ \mbox{and} \ \ \varrho_{n}'(\delta)=-\beta\delta^{-\frac{1}{M}-1}+O(\delta^{\frac{\eta}{M}}),
\ee
 with $\alpha=b_{+}\Lambda_{n}^{\frac{1}{M}}$ and $\beta=\frac{b_{+} \Lambda_{n}^{\frac{1}{M}}}{M}$, and $\eta$  defined in Corollary \ref{C:asymptBexplicit}. 

Following the approach introduced in \cite{hispopsoc},  we consider an energy interval $I_{b}:=(\overline{\mathcal E}_n-\delta_{1},\overline{\mathcal E}_n-\delta_{2})$ with $0<\delta_{1}<\delta_{2}$, and  provide quantitative upper and lower bound for the current of states localized in energy in $I_{b}$, as $\delta_{i}\to0$.

\begin{proposition}\label{corocurrent}
Assume \eqref{b_bound} and \eqref{E:HypB}. Then there exists $\eta>0$ such that for all $\varphi\in D(H_0)$ satisfying $\dP_{I_{b}}(H_0)\varphi=\varphi$, there holds, as $\delta_{i}\to 0$: 
$$\frac{\delta_{1}^{1+\frac{1}{M}}}{\beta}+O(\delta_{1}^{\frac{\eta}{M}})\leq\frac{ \langle J_{y} \pi_{n}\varphi,\pi_{n}\varphi \rangle}{\| \pi_{n}\varphi \|^2} \leq \frac{\delta_{2}^{1+\frac{1}{M}}}{\beta}+O(\delta_{2}^{\frac{\eta}{M}}).$$ Here the remainders are uniform with respect to $\delta_{i}$ and $\varphi$. 
\end{proposition}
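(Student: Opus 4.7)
The plan is to apply directly the rough two-sided bound \eqref{E:estimatecurrentrough} on the normalized current, and then to evaluate the supremum and infimum of $E_n'$ over $E_n^{-1}(I_b)$ using the detailed asymptotics provided by Corollary \ref{C:asymptBexplicit} and the expansion of $\varrho_n$ in \eqref{E:asymptvarrho}. Since the spectral theorem identity behind \eqref{E:estimatecurrentrough} is uniform in $\varphi$, once the sup and inf are controlled, the final uniformity in $\varphi$ follows for free.

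First I would identify $E_n^{-1}(I_b)$. Because $E_n$ is strictly increasing on $\R$ with $\lim_{k\to+\infty}E_n(k)=\overline{\cE}_n$, the function $\varrho_n=(\overline{\cE}_n-E_n)^{-1}$ is strictly decreasing on $(0,\overline{\cE}_n-\underline{\cE}_n)$, so $E_n^{-1}(I_b)$ is the open interval whose endpoints are $\varrho_n(\delta_1)$ and $\varrho_n(\delta_2)$, both tending to $+\infty$ as the $\delta_i$ vanish. I would then invoke the second derivative asymptotic $E_n''(k)=-M(M+1)\Lambda_n b_+^M k^{-M-2}+O(k^{-\eta})$ from Corollary \ref{C:asymptBexplicit}, which is strictly negative for $k$ large, to conclude that $E_n'$ is strictly decreasing on $E_n^{-1}(I_b)$ once the $\delta_i$ are small enough. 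Hence the infimum and supremum of $E_n'$ on this interval are attained at the endpoints, and they correspond respectively to $\delta_1$ (larger $k$, smaller $E_n'$) and to $\delta_2$ (smaller $k$, larger $E_n'$).

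The key computation is then to express $E_n'(\varrho_n(\delta))$ as a function of $\delta$. Substituting $\varrho_n(\delta)=\alpha\delta^{-1/M}+O(\delta^{1/M})$ from \eqref{E:asymptvarrho} into $E_n'(k)=M\Lambda_n b_+^M k^{-M-1}+O(k^{-\eta})$ from Corollary \ref{C:asymptBexplicit}, and using the identity $M\Lambda_n b_+^M/\alpha^{M+1}=1/\beta$ (which is a direct verification from $\alpha=b_+\Lambda_n^{1/M}$ and $\beta=b_+\Lambda_n^{1/M}/M$), one obtains
$$E_n'(\varrho_n(\delta))=\frac{\delta^{1+1/M}}{\beta}+O(\delta^{\eta/M}),$$
uniformly for small $\delta>0$. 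Plugging this at $\delta=\delta_1$ (for the lower bound) and at $\delta=\delta_2$ (for the upper bound) into \eqref{E:estimatecurrentrough} yields the announced estimates.

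The only mildly delicate point is bookkeeping on the remainder: one must verify that $\eta/M>1+1/M$, i.e. $\eta>M+1$, so that $O(\delta^{\eta/M})$ is genuinely negligible compared with the principal term $\delta^{1+1/M}/\beta$. This is automatic since $\eta=\min(2M+2,M+3)>M+1$ for every $M>0$. Beyond this, no serious obstacle remains: the analytic heart of the argument has already been supplied by Theorem \ref{T:asymptband} and Corollary \ref{C:asymptBexplicit}, and the present proposition is essentially an algebraic calibration of the resulting semiclassical constants.
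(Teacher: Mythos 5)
Your proof is correct and follows essentially the same route as the paper: reduce to the general bound \eqref{E:estimatecurrentrough} (via \eqref{estimatecurrentgeneral}) and then calibrate the endpoints using the asymptotics of $E_n'$ and $\varrho_n$. The paper does the algebra in one line via the identity $(E_n'\circ E_n^{-1})(y)=-1/\varrho_n'(\overline{\cE}_n-y)$ together with the expansion of $\varrho_n'$ in \eqref{E:asymptvarrho}, whereas you substitute $\varrho_n(\delta)$ into the expansion of $E_n'(k)$ and verify $M\Lambda_n b_+^M/\alpha^{M+1}=1/\beta$ — two equivalent ways of saying the same thing — and you additionally spell out (using $E_n''<0$ at infinity) that $E_n'$ is eventually monotone so the sup and inf sit at the endpoints, a detail the paper leaves implicit.
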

\begin{proof}
Note that $(E_{n}'\circ E_{n}^{-1})(y)=-\frac{1}{\varrho_{n}'(\overline{\mathcal E}_n-y)}$. Let $I\subset \sigma(H_0)$, assume that $\cT_{H_0}\cap I=\emptyset$, and define the set  $\tau(I):=-I+\overline{\mathcal E}_n$. Let $\varphi$ be such that $\dP_{I}(\varphi)=\varphi$.  Then, \eqref{E:estimatecurrentrough} provides directly
 \bel{estimatecurrentgeneral}
 \inf_{y\in\tau(I)}\frac{-1}{\varrho_{n}'(y)} \leq \frac{\langle J_{y} \pi_{n}\varphi,\pi_{n}\varphi \rangle }{\|\pi_{n}\varphi\|^2} \leq  \sup_{y\in\tau(I)}\frac{-1}{\varrho_{n}'(y)}. \ee
We now apply this estimate with $I=I_{b}$ and we conclude using \eqref{E:asymptvarrho}.
\end{proof}
\begin{remark} Formula \eqref{estimatecurrentgeneral} is very general and gives estimates on the current depending on intrinsic quantities. Using Theorem \ref{T:asymptband},  we  could   show under more general conditions that $\varrho_{n}'$ behaves roughly as $\delta\mapsto b_+/(\Lambda_nb'(\frac{\varrho_n(\delta)}{b_+}))$, as $\delta \downarrow 0$, and we  could give the  corresponding version of Proposition \ref{corocurrent}.  However, to keep the statements simple, we preferred to consider more explicit magnetic fields. 
\end{remark}
\Bk

\subsection{(De)localization of states  at energy near thresholds \Bk }

It is known that states whose energy is far from thresholds are localized in $x$-direction, in the sense that they have exponential decay at infinity, and this decay can be estimated using the distance from their energy to the threshold, see \cite{hissocc2}. Such states are usually called {\it edge states}, because they typically arise in quantum systems with a boundary, although  it is known  that \Bk open magnetic system such as the one considered here can exhibit  this kind of behavior (\cite{ExJoyKov99}). On the contrary, states whose energy is close to the thresholds are called {\it bulk states} since they are usually located far from the boundary, if this exists.  Here we give quantitative estimates on states, as their energy tends to the limit of a band function, showing that they are located  in a   region where  the variation of the magnetic field is small.
 \begin{theorem}\label{theo_loca}
Set $I_{b}(\delta):=(\overline{\mathcal E}_n-\delta,\overline{\mathcal E}_n)$, { and assume \eqref{b_bound} and \eqref{E:HypB}.
Then, there exists  positive constants 
$c$,
 $C$ and $\delta_0$}, such that $\forall \delta\in (0,\delta_{0})$\rm{:}
\bel{E:bulklocali}
\forall \varphi\in Ran(\dP_{I_{b}(\delta)}(H_0)), \quad \int_{-\infty}^{ c \delta^{-\frac{1}{M}} }\int_\R|\pi_{n}\varphi(x,y)|^2 \rd y \,\rd x  \leq C { \delta^{ \frac{\eta}{M}\Bk}\|\pi_{n}\varphi\|^2},
\Bk
\ee
where $\eta$ has been defined in Corollary \ref{C:asymptBexplicit}.
 \end{theorem}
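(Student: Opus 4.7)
The plan is to reduce the left-hand side to a uniform pointwise estimate on the fiber eigenfunctions, then exploit their concentration around a center $x_{k}$ that grows linearly in $k$. First I would use the fiber decomposition together with Parseval in $y$: writing $\hat{\varphi}:=\cU_{n}^{*}\cF\varphi$, one has
\begin{equation*}
\int_{-\infty}^{c\delta^{-1/M}}\!\!\int_{\R} |\pi_{n}\varphi(x,y)|^2\,\rd y\,\rd x
= \int_{\R} |\hat{\varphi}(k)|^2 \left(\int_{-\infty}^{c\delta^{-1/M}}|u_{n}(x,k)|^2\,\rd x\right)\rd k.
\end{equation*}
Since $\dP_{I_{b}(\delta)}(H_{0})\varphi=\varphi$, the support of $\hat{\varphi}$ is contained in $E_{n}^{-1}(I_{b}(\delta))$, and by monotonicity of $E_{n}$ together with the definition of $\varrho_{n}$, this set equals $(\varrho_{n}(\delta),+\infty)$. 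Using $\|\hat{\varphi}\|^2=\|\pi_{n}\varphi\|^2$ and \eqref{E:asymptvarrho}, it is enough to prove the uniform bound $\int_{-\infty}^{c\delta^{-1/M}}|u_{n}(x,k)|^2\,\rd x=O(\delta^{\eta/M})$ for all $k\geq\varrho_{n}(\delta)=b_{+}\Lambda_{n}^{1/M}\delta^{-1/M}(1+o(1))$.

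Next I would split $u_{n}=u^{\qm}_{n}+r_{n}$ with $r_{n}:=u_{n}-u^{\qm}_{n}$, whose $L^2$-norm is $O(\epsilon(k))$ by \eqref{E:spectralTbis}. Under hypothesis \eqref{E:HypB}, elementary differentiation of $b_{+}-b(x)=x^{-M}$ yields $|b^{(p)}(x)|\lesssim x^{-M-p}$, hence $(b'_{k})^2=O(k^{-2M-2})$, $\sup_{x>\sigma k}|b'b''(x)|=O(k^{-2M-3})$ and $\sup_{x>\sigma k}|b^{(3)}(x)|=O(k^{-M-3})$. Adding the exponential term one gets $\epsilon(k)=O(k^{-\eta})$, with $\eta=\min(2M+2,M+3)$ matching Corollary \ref{C:asymptBexplicit}. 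Consequently, for $k\geq\varrho_{n}(\delta)$, $\|r_{n}\|_{L^{2}}^2=O(\delta^{2\eta/M})$ uniformly, which is absorbed into the target $O(\delta^{\eta/M})$.

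The main contribution then comes from $\int_{-\infty}^{c\delta^{-1/M}}|u^{\qm}_{n}(x,k)|^2\,\rd x$. After the change of variable $t=b_{k}^{1/2}(x-x_{k})$ it equals $\int_{-\infty}^{b_{k}^{1/2}(c\delta^{-1/M}-x_{k})}|v^{\qm}_{n}(t,k)|^2\,\rd t$. The decisive geometric observation is that $a(x)=b_{+}x+O(1+x^{1-M})$ at infinity, so $x_{k}=k/b_{+}+o(\delta^{-1/M})$, and for $k\geq\varrho_{n}(\delta)$ this gives $x_{k}\geq\Lambda_{n}^{1/M}\delta^{-1/M}(1-o(1))$. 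Choosing $c$ strictly less than $\Lambda_{n}^{1/M}$ guarantees, for $\delta$ small enough, that the upper $t$-limit is bounded above by some $-\kappa\delta^{-1/M}$ with $\kappa>0$ independent of $k$. Since $v^{\qm}_{n}$ is a finite linear combination of Hermite's functions with coefficients uniformly bounded in $k$, there is a polynomial $P_{n}$ with $|v^{\qm}_{n}(t,k)|^2\leq P_{n}(t)\,e^{-t^2}$, and hence $\int_{-\infty}^{-\kappa\delta^{-1/M}}|v^{\qm}_{n}|^2\,\rd t=O(\exp(-\kappa^2\delta^{-2/M}/2))$, which is negligible compared to $\delta^{\eta/M}$.

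I expect the main obstacle to be the geometric step: producing a sufficiently clean lower bound of $x_{k}$ in terms of $\varrho_{n}(\delta)$ so that the constant $c$ can be fixed strictly less than $\Lambda_{n}^{1/M}$, uniformly in $k$ on the relevant support, while absorbing the $o(1)$ coming from inverting $a$. Once this is in place, the quasi-mode error estimate (polynomially small, from $\epsilon(k)$) and the Gaussian tail of Hermite's functions (exponentially small in $\delta^{-2/M}$) combine in a routine way to yield the bound \eqref{E:bulklocali}.
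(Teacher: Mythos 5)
Your proof is correct and follows the same overall strategy as the paper: reduce via the fiber decomposition and Plancherel to a uniform bound on $\int_{-\infty}^{c\delta^{-1/M}}u_n(x,k)^2\,\rd x$ for $k$ in the support $(\varrho_n(\delta),+\infty)$, split $u_n$ into the quasi-mode $u^{\qm}_n$ plus the remainder $r_n$ controlled in $L^2$ by \eqref{E:spectralTbis}, and exhaust the main contribution by the Gaussian tail of Hermite functions after the change of variables \eqref{E:CV}. The one genuine difference is in the choice of cut-off. The paper first proves the sharper intermediate statement \eqref{E:localizationsharp}, taking the cut-off $x(\delta)=x_{k_-(\delta)}-\frac{\nu}{b_+}\sqrt{|\log r_n(\delta)|}$, i.e.\ pushing the boundary as close as possible (up to a $\sqrt{\log}$ correction) to the minimum center $x_{k_-(\delta)}\sim\Lambda_n^{1/M}\delta^{-1/M}$, so that the Gaussian tail contributes exactly $O(r_n(\delta))$. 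You instead fix $c$ strictly below $\Lambda_n^{1/M}$, which makes the Gaussian tail doubly-exponentially small in $\delta^{-1/M}$ and leaves the remainder $\|r_n\|^2=O(\delta^{2\eta/M})$ (plus the exponentially small cross term) as the dominant contribution; this is simpler and in fact gives a slightly stronger decay rate, at the cost of not identifying the best admissible constant $c$. Both choices satisfy the statement as given, which only claims existence of some $c>0$, so your argument is sound; the paper's route gains the optimal $c$ discussed in its closing remark.
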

\begin{proof}
 { Consider the function $\epsilon$  defined in \eqref{E:defepsilon}, and  set $r_{n}(\delta):=\sup_{k\in \varrho_{n}((0,\delta))} \epsilon(k)$ and $k_{-}(\delta):=\inf \varrho_{n}((0,\delta))=\varrho_{n}(\delta).$ For $\nu>1$, let 
$$x(\delta):=x_{k_{-}(\delta)}-\frac{\nu}{b^{+}}\sqrt{|\log r_{n}(\delta)|}.$$ 
 We will prove the more precise result: 
 \bel{E:localizationsharp}
 \forall \varphi\in Ran(\dP_{I_{b}(\delta)}), \quad \int_{-\infty}^{x(\delta)}\|\pi_{n}\varphi(x,\cdot)\|_{L^2(\R)}^2 \rd x  \leq Cr_{n}(\delta)\|\pi_{n}\varphi\|^2.
 \ee
 From Corollary \ref{C:asymptBexplicit} one can easily prove that there exist  $A, B>0$ such that
 \bel{jan12_17}\ba{l}
 x_{k_{-}(\delta)}=A\delta^{-1/M}(1+o(1))\\
 r_n(\delta)=B \delta^{ \frac{ \eta}{M} }(1+o(1)), \quad  \delta \downarrow 0,
 \ea\ee} and therefore $x(\delta) \geq c \delta^{-\frac{1}{M}}$ for $\delta$ small enough with $c>0$ a constant.  Thus, \eqref{E:bulklocali} is implied using \eqref{E:localizationsharp} and \eqref{jan12_17}.\Bk
 
 Now, let us prove \eqref{E:localizationsharp}. We use that $\|\pi_{n}\varphi(x,\cdot)\|_{L^2(\R)}=\|\varphi_{n}u_{n}(x,\cdot)\|_{L^2(\R)},$
 where $\varphi_{n}:=\cU_{n}^{*}\cF \varphi$. Moreover, $\dP_{I_{b}(\delta)}(H_0)\varphi=\varphi$ implies that $\supp(\varphi_{n})\subset \varrho_{n}((0,\delta))$, and therefore
  \begin{align}\begin{split}
    \int_{-\infty}^{x(\delta)} \|\pi_{n}\varphi(x,\cdot)\|_{L^2(\R)}^2 \rd x &=\int_{\varrho_{n}((0,\delta))}\varphi_{n}(k)^2 \int_{-\infty}^{x(\delta)}u_{n}(x,k)^2 \rd x \rd k
  \\
\label{E:decomposphsp}  &= \int_{\varrho_{n}((0,\delta))}\varphi_{n}(k)^2 \int_{-\infty}^{x(\delta)}u^{\qm}_{n}(x,k)^2 \rd x \rd k+O(\epsilon(k)),
\end{split}  \end{align}
  where $u^{\qm}_{n}$ is the quasimode constructed in Section \ref{SS:QM}, see \eqref{E:defvn}. 
Thus, we are left with the task of estimating $\int_{-\infty}^{x(\delta)}u^{\qm}_{n}(x,k)^2\rd x$ for $k\in \varrho_{n}((0,\delta))$. To this end,  we use the same change of variables \eqref{E:CV}, and noticing that $x(\delta)\to+\infty$ as $\delta\to0$ we found that 
$$\int_{-\infty}^{x(\delta)}u^{\qm}_{n}(x,k)^2 \rd x=\int_{-\infty}^{b_{k}^{1/2}(x(\delta)-x_{k})} v^{\qm}_{n}(t,k)^2 b_{k}^{-1/2} \rd t=(1+O(\alpha_{1}(k))b_{k}^{-1/2}\int_{-\infty}^{b_{k}^{1/2}(x(\delta)-x_{k})} \Psi_{n}(t)^2  \rd t.$$
Since $a^{-1}$ is increasing, by definition of $k^{-}$, $x_{k^{-}}-x_{k}\leq 0$ for all $k\in \varrho_{n}((0,\delta))$. Then, 
$$b_{k}^{1/2}(x(\delta)-x_{k}) \leq b_{k}^{1/2}(x(\delta)-x_{k^{-}})=-\nu \left(\tfrac{b_{k}}{b_{+}}\right)^{1/2}\sqrt{|\log r_{n}(\delta)|} <0.$$
Moreover, as $L\to-\infty$ , there holds $$\int_{t \leq L} \Psi_{n}(t)^2 \rd t=O(L^{2n+1}e^{-L^2}).$$
Accordingly, for a constant $C>0$ and  $\delta$  small:
$$\forall k\in \varrho_{n}((0,\delta)), \quad \int_{-\infty}^{x(\delta)}u^{\qm}_{n}(x,k)^2 \rd x \leq C \sqrt{|\log r_{n}(\delta)|}^{2n+1}r_{n}(\delta)^{\nu^2\left(\tfrac{b_{k}}{b_{+}}\right)}.$$
Finally, for $\delta$ small enough, $\nu^2\left(\tfrac{b_{k}}{b_{+}}\right)> 1$ \Bk for all $k\in \varrho_{n}((0,\delta))$, which implies 
$$\forall k\in \varrho_{n}((0,\delta)), \quad \int_{-\infty}^{x(\delta)}u^{\qm}_{n}(x,k)^2 \rd x \leq C r_{n}(\delta).$$
We get \eqref{E:localizationsharp} by combining the last inequality  with \eqref{E:decomposphsp}  and  $\int_{k}\varphi_{n}(k)^2 \rd k=\|\pi_{n}\varphi\|^2$.
\end{proof}
 
\begin{remark}
Using the more precise (and more general) estimate \eqref{E:localizationsharp}, it is possible to get the best possible constant $c$ in \eqref{E:bulklocali}, up to a correction term in $\sqrt{\log \delta}$. We have preferred to state \eqref{E:bulklocali} for readability. 
\end{remark}
\Bk
\section{The spectral shift function}\label{secSSF}
Consider an electric potential $V : \R^2 \to [0,\infty)$, that is a Lebesgue measurable {function} satisfying 
    \bel{cd2}
    V(x,y)\leq C \langle x,y \rangle  ^{-m}, \quad (x,y) \in \R^2,
\ee for  some  positive constant $C$, $m > 2$, and where $\langle x, y \rangle:=(1+x^2+y^2)^{1/2}$. On the domain of $H_0$ introduce the operator 
$$H: = H_0 + V,$$ self-adjoint in $L^2(\R^2)$.
Estimate \eqref{cd2} combined  {with} the diamagnetic inequality  imply that for any real $E_0 < \inf \sigma(H)$ the operator
 $V^{1/2} (H_0 - E_0)^{-1}$ is Hilbert--Schmidt, and hence the resolvent difference $(H-E_0)^{-1}-(H_0-E_0)^{-1}$ is
a trace-class operator. This last property implies that 
there exists a unique function $\xi = \xi(\cdot; H, H_0) \in L^1(\R; (1+E^2)^{-1}\rd E),$ called the
Spectral Shift Function (SSF) for the operator pair $(H,H_0)$, that satisfies the Lifshits-Kre\u{\i}n trace formula:
$$\mbox{Tr}(f(H)-f(H_0))=\int_\R \xi(E;H,H_0) f'(E)\rd E, $$
for each $f \in C_0^\infty(\R)$, and vanishes identically in $(-\infty, \inf \sigma(H))$ \cite{yaf}. 

The SSF can be seen as  the scattering phase of the operator pair $(H,H_0)$, namely  we have the  Birman-Kre\u{\i}n formula:
\bel{scatteringSSF}
{\rm det}(S(E))=e^{-2\pi i\xi(E)}, \quad E \in \sigma_{ac}(H_0) \, {\rm a.e.}, 
\ee
where $S(E)$ is the scattering matrix of the operator pair $(H,H_0)$.

In addition, we have the following relation between the SSF  and the eigenvalue counting function. \Bk Condition \eqref{cd2} implies that the essential spectrum of $H$ is given by 
$\bigcup_{n\geq 1}[\underline{\mathcal E}_n,\overline{\mathcal E}_n]$. Then,  if we  suppose that there is a finite gap in this set, let say  $
\overline{\mathcal E}_n<\underline{\mathcal E}_{n+1}$, we may define:$$
{\mathcal N_n}(\lambda)=\rm{Rank}\,\dP_{({{\mathcal E}}_n+\lambda,{{\mathcal E}}_{n+1})}(H),$$
which is the function that counts the number of discrete eigenvalues of $H$ on the interval $(\overline{\mathcal E}_n+\lambda,\underline{\mathcal E}_{n+1})$. 
From \eqref{pssf} below and the Birman-Schwinger principle we can see that 
\bel{15_sep_16}    {\mathcal N}_n(\lambda) =\xi({{\mathcal E}}_n+\lambda;H_{0}+V,H_0)+O(1),\quad \lambda\in(0,\underline{\mathcal E}_{n+1}-\overline{\mathcal E}_n)\ee
i.e., outside the essential spectrum both functions agree up to a bounded term, and therefore  the SSF could be seen as an extension of 
$ {\mathcal N}_n$ to  the whole real line\footnote[2]{Although we have to be aware that the SSF is only defined as an element of $L^1(\R; (1+E^2)^{-1}\rd E).$}.  As we already mentioned in the introduction, there exist considerable works that study the distribution of the discrete eigenvalues of $H$ by giving the asymptotic behavior of ${\mathcal N_n}(\lambda)$, particularly for the constant magnetic field case. For $b$ non-constant there exist some works as well, but to the best knowledge of the authors, the spectral density inside the continuous spectrum has not been considered yet. \Bk  

In this article, for the class defined by the SSF in  $L^1(\R; (1+E^2)^{-1}\rd E),$ we are going to take Pushnitski's  representative given by \eqref{pssf} below, and our main theorems will refer to it.


Our goal is to study the behavior of the SSF  when a thresholds is approached by below or by above, for negative or positive perturbations. In what follows, we have stated our results for thresholds $\overline{\cE_{n}}$, corresponding to the limit in $+\infty$ of the band functions, in link with the fact that $b(x)\to b^{+}$ as $x\to+\infty$. If there exists $p\in \N$ such that $\overline{\cE_{n}}=\underline{\cE_{p}}$,  the results are more difficult to read, because the possible singularity of the SSF at $\overline{\cE_{n}}$ will not come only from the $n$-th band function, but also from the fact that $\lim_{k\to-\infty}E_{p}(k)=\overline{\cE_{n}}$ . For that reason, from now on we will assume 
\bel{IsolatedT}
\forall p\in \N, \quad \overline{\cE_{n}}\neq\underline{\cE_{p}},
\ee 
which is equivalent to say that   $\frac{b_{-}}{b_{+}}$ is not a ratio of odd integers. 

The following theorem shows that for some non-constant magnetic fields and  suitable conditions on $V$,  the SSF ``extends" the    properties of continuity and boundedness of the eigenvalue counting function into the continuous spectrum of $H$.

\begin{theorem}\label{thssf}
Suppose that  $V\geq0$  satisfies \eqref{cd2} and  that $b$  satisfies \eqref{b_bound} and \eqref{E:HypB}. 
Set $H_\pm:=H_0\pm V$, then:
\begin{enumerate}
\item\label{thssf1a}  On any compact set $\mathcal{C}\subset \R\setminus\cT_{H_0}$, 
$\sup_{E\in{\mathcal C}}\xi(E;H_\pm,H_0)<\infty,$
i.e.  the SSF is bounded  away from the thresholds. Moreover, 
as $\lambda \downarrow 0$:
\bel{SSFcontrolthresh}
 \xi(\overline{\mathcal{E}}_n-\lambda;H_\pm,H_0)=O(\lambda^{-1}+\lambda^{-2/M}),
\quad \xi(\overline{{\mathcal E}}_n+\lambda;H_{+},H_0)=O(\lambda^{-1}), \ee
\bel{SSFboundedthresh}
\xi(\overline{{\mathcal E}}_n+\lambda;H_{-},H_0)=O(1).
\ee
\item\label{thssf1} The SSF $\xi(\cdot ;H_\pm,H_0)$ is continuous on $\R \setminus \big(\sigma_p(H_\pm)\cup \cT_{H_0}\big)$, where $\sigma_p(H_\pm)$ denote the set of eigenvalues of $H_\pm$. 
\end{enumerate}
\end{theorem}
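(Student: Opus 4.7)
The plan is to apply Pushnitski's representation \eqref{pssf} of the SSF to reduce the analysis of $\xi(\cdot;H_\pm,H_0)$ to a study of the boundary values of the sandwiched resolvent $T(z):=V^{1/2}(H_0-z)^{-1}V^{1/2}$, and to combine this with the asymptotics of band functions and eigenfunctions from Section \ref{sec_asymp_band}. Since $V$ satisfies \eqref{cd2}, the diamagnetic inequality yields that $T(z)$ is Hilbert-Schmidt for any non-real $z$; using the fiber decomposition \eqref{direc}, I would establish that the boundary values $T(E\pm i0)=A(E)\pm iB(E)$ exist in norm for every $E\in \R\setminus \cT_{H_0}$, with $A(E)$ compact self-adjoint and $B(E)\geq 0$ of finite rank equal to twice the number of bands that cross $E$, with explicit coefficients $|E_n'(k)|^{-1}$ on each preimage $k\in E_n^{-1}(E)$ (Fermi golden rule).

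Pushnitski's formula then produces a pointwise estimate of the form
$$\xi(E;H_\pm,H_0)\leq C\bigl(n_\mp(\tfrac12;\pm A(E))+\|B(E)\|_1\bigr),$$
so proving item (\ref{thssf1a}) on a compact $\mathcal{C}\subset \R\setminus \cT_{H_0}$ amounts to controlling both terms uniformly in $E\in\mathcal{C}$. The preimage $\bigcup_n E_n^{-1}(\mathcal{C})$ is a compact subset of $\R$ (the thresholds being the only accumulation points of $\{E_n(k):k\in \R\}$ at infinity), so $|E_n'(k)|$ stays uniformly bounded away from zero on this preimage, providing a uniform bound on $\|B(E)\|_1$. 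The quantity $n_\mp(\tfrac12;\pm A(E))$ is controlled via a Birman-Schwinger argument reducing it to a count of eigenvalues of $h(k)\pm V/2$ for $k$ in the preimage, which is again locally uniformly bounded on $\mathcal{C}$.

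For the quantitative estimates near a threshold $\overline{\mathcal{E}}_n$, I would plug Corollary \ref{C:asymptBexplicit} into the bounds above. At $E=\overline{\mathcal{E}}_n-\lambda$ the $n$-th band contributes a preimage $k\sim\varrho_n(\lambda)$ of order $\lambda^{-1/M}$ with $|E_n'(k)|^{-1}$ of order $\lambda^{-1-1/M}$; combined with the concentration at $x$ of order $\lambda^{-1/M}$ of the eigenfunctions $u_n(\cdot,k)$ (Theorem \ref{theo_loca}) and the decay \eqref{cd2} of $V$, this yields the $O(\lambda^{-2/M})$ contribution coming from $\|B(E)\|_1$. The $O(\lambda^{-1})$ term is produced by $n_+(\tfrac12;A(E))$, bounded via a Birman-Schwinger count of eigenvalues of $H_+$ accumulating at the threshold from above. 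For \eqref{SSFboundedthresh}, the $n$-th band no longer contributes to $B(E)$ for $E>\overline{\mathcal{E}}_n$ close to the threshold (since $E\notin E_n(\R)$), while the sign of the perturbation $-V$ prevents any Birman-Schwinger resonance of $A(E)$ from accumulating there, yielding the uniform bound.

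Finally, item (\ref{thssf1}) follows from the norm continuity of $A(E)$ and $B(E)$ on $\R\setminus \cT_{H_0}$, itself a consequence of the smoothness and strict monotonicity of the band functions (Theorem \ref{T:asymptband}) together with the continuous dependence in $k$ of the eigenfunctions $u_n(\cdot,k)$; the standard continuity criterion for the SSF via Pushnitski's formula then identifies the possible discontinuities with those values $E$ for which $1\in \sigma(\pm A(E))$, i.e., with the eigenvalues of $H_\pm$. The main obstacle is expected to be the precise trace-class control of $B(E)$ at the threshold, which will require both the fine asymptotics of $E_n'(k)$ from Theorem \ref{T:asymptband} and the localization estimate of Theorem \ref{theo_loca}; the uniform control of $n_\mp(\tfrac12;\pm A(E))$ on compacts through a delicate fiber-wise Birman-Schwinger analysis is also a non-trivial ingredient.
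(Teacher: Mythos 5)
Your proposal correctly identifies the scaffolding the paper uses --- Pushnitski's representation \eqref{pssf}, boundary values $T(E+i0)$ of the sandwiched resolvent, control near thresholds via band-function asymptotics, and continuity via norm continuity of the real and imaginary parts. However, several technical attributions are off in ways that would make the proof fail.

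First, the rank of $\mathrm{Im}\,T(E+i0)$ is \emph{not} twice the number of bands crossing $E$. For the Iwatsuka model each $E_l$ is strictly monotone, so $E_l^{-1}(E)$ is at most a single point; the rank is exactly the number of bands whose range contains $E$ (this is $\#L_n$, see \eqref{rankIm}). The factor two is a feature of periodic-in-$k$ fibered operators, not this one.

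Second, and more importantly, you locate the $O(\lambda^{-2/M})$ growth in $\|B(E)\|_1=\|\mathrm{Im}\,T(E+i0)\|_1$. This is the wrong place: since $\mathrm{Im}\,T(E+i0)$ has \emph{uniformly bounded rank} on any interval between thresholds, the term $\frac{1}{\pi}\int_\R n_*(\varepsilon;t\,\mathrm{Im}\,T)\frac{dt}{1+t^2}$ in Pushnitski's bound is always $\leq \mathrm{Rank}(\mathrm{Im}\,T)$ (each $n_*$ is dominated by the rank), hence $O(1)$ up to the threshold. The $\lambda^{-2/M}$ singularity actually comes from the \emph{real part} of $T(E+i0)$, specifically from the principal-value integral around the turning point $s=\lambda$ after the change of variable $k=\varrho_n(s)$. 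Extracting this requires a Lipschitz bound on $k\mapsto G_n(k)^*G_n(k)$ (Lemma \ref{boch_lipt}) and a trace-norm estimate of the principal value (Lemma \ref{estiPV} gives $\|\mathrm{p.v.}\int_{\lambda-\epsilon_\lambda}^{\lambda+\epsilon_\lambda}\cdots\|_1\leq C\epsilon_\lambda\lambda^{-2/M-1}$, then take $\epsilon_\lambda=\lambda/2$). Converting this to a counting estimate then uses Chebyshev \eqref{cheby} with $p=1$ and Ky Fan/Weyl. Your approach, which leans on $\|B\|_1$ and on the spatial localization of Theorem \ref{theo_loca}, does not produce this term: Theorem \ref{theo_loca} concerns localization of bulk states in $x$, but the ingredient the threshold estimate needs is the pointwise Gaussian decay of the fiber eigenfunctions around $x_k$ (the estimate \eqref{3mar17c}), which controls $\|G_n^*G_n\|_1$ \emph{uniformly in $k$} so as to make the tail integrals contribute only $O(\lambda^{-1})$.

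Third, the ``fiber-wise Birman-Schwinger with $h(k)\pm V/2$'' does not make sense as stated: $V$ depends on both $x$ and $y$, so it does not commute with the fiber decomposition and cannot be sandwiched into the fiber operator $h(k)$. The quantity one actually controls fiber-wise is the operator family $G_n(k)^*G_n(k)$ (it incorporates both the eigenfunction $u_n(\cdot,k)$ and the Fourier kernel in $y$), not a perturbed fiber Hamiltonian. Finally, for \eqref{SSFboundedthresh} your intuition about the sign of $-V$ is correct in spirit, but the precise mechanism is the sign of the \emph{effective real-part operator}: after subtracting the bounded tail, $T_{n,A}(-\lambda)=\int_0^A\frac{G_n(\varrho_n(s))^*G_n(\varrho_n(s))}{s+\lambda}\varrho_n'(s)\,\rd s$ is non-positive (because $\varrho_n'<0$), so $n_+(\cdot;T_{n,A}(-\lambda))=0$ identically; that is what makes the bound $O(1)$, not a diffuse ``no Birman-Schwinger resonance'' claim.
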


 \begin{remark} 
 It is obvious that under conditions of Theorem \ref{thssf}, the function ${\mathcal N_n}(\lambda)$ satisfies 
 the same boundedness and continuity properties, wherever it  is defined. Similar results for different magnetic Hamiltonians have been obtained before, see for example \cite{bpr, bsoccrai, brumi1}.
\end{remark}
From  Theorem  \ref{thssf} we know that the only possible  points for $\xi(E;H_\pm,H_0)$  to be  unbounded,  are the thresholds in $\cT_{H_0}$. 
 In the following two Theorems we show that this also depends on the relation between the decaying rate of $V$ and and the convergence rate of  $b(x)$ to $b_+$. \Bk 
Furthermore, in the unbounded case we will obtain the explicit  asymptotic behavior  of $\xi$ at the thresholds. 

\begin{theorem}\label{thVcompact}
 Assume that   $V\geq0$ satisfies \eqref{cd2}. Suppose also that $b$  satisfies \eqref{b_bound} and  \eqref{E:HypB}  with $m>M+2$.  Then,  as $\lambda\downarrow 0$,  $$\xi(\overline{{\mathcal E}}_n+\lambda;H_{\pm},H_0)=O(1) \ \ \mbox{and} \ \ \xi(\overline{{\mathcal E}}_n-\lambda;H_{\pm},H_0)=O(1).$$
\end{theorem}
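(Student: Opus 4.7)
The plan is to combine the asymptotics from Section \ref{sec_asymp_band} with Pushnitski's representation of the SSF (formula \eqref{pssf}), reducing the boundedness of $\xi(\overline{\mathcal E}_n \pm \lambda; H_\pm, H_0)$ as $\lambda \downarrow 0$ to a uniform trace-norm bound on the sandwiched spectral density $T(E) := \tfrac{1}{\pi} V^{1/2} \mathrm{Im}(H_0 - E - i0)^{-1} V^{1/2}$. Since Theorem \ref{thssf} already gives boundedness on compact subsets of $\R\setminus\cT_{H_0}$, only the limit as $E \to \overline{\mathcal E}_n$ needs new work, and this is where the stronger hypothesis $m > M+2$ must be exploited.

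Via the fiber decomposition \eqref{direc}, $T(E)$ splits as a sum of positive rank-one operators, one for each band $m$ with $E \in \overline{E_m(\R)}$, weighted by $|E_m'(k_m(E))|^{-1}$, where $k_m(E) := E_m^{-1}(E)$. As $E \to \overline{\mathcal E}_n$, assumption \eqref{IsolatedT} ensures that only the $n$-th band can be singular, with $k_n(\overline{\mathcal E}_n - \lambda) = \varrho_n(\lambda) \to +\infty$. The trace of this singular contribution equals
\[
\frac{1}{|E_n'(k)|}\int_{\R^2} V(x,y)\, u_n(x, k)^2\, dx\, dy = \frac{1}{|E_n'(k)|}\int_{\R} \bar V(x)\, u_n(x, k)^2\, dx,
\]
where $\bar V(x) := \int_{\R} V(x,y)\, dy$ satisfies $\bar V(x) \leq C\langle x\rangle^{-(m-1)}$ under \eqref{cd2} with $m > 2$. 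The two asymptotic ingredients from the preceding sections now combine: by Corollary \ref{C:asymptBexplicit}, $|E_n'(k)|^{-1} = O(k^{M+1})$; by the quasimode construction of Subsection \ref{SS:QM} together with \eqref{E:spectralTbis}, the normalized eigenfunction $u_n(\cdot, k)$ is concentrated, with Gaussian tails, in an $O(1)$-neighborhood of $x_k \sim k/b_+$. Plugging in $\bar V(x_k) \lesssim k^{-(m-1)}$ yields
\[
\frac{1}{|E_n'(k)|}\int_{\R} \bar V(x)\, u_n(x, k)^2\, dx = O\bigl(k^{M+2-m}\bigr),
\]
which is bounded (in fact tends to $0$) precisely under the hypothesis $m > M + 2$.

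Combining these estimates, $\|T(E)\|_1$ stays bounded as $E \to \overline{\mathcal E}_n^{\pm}$, and Pushnitski's representation \eqref{pssf} then converts this into the uniform bound $\xi(\overline{\mathcal E}_n \pm \lambda; H_\pm, H_0) = O(1)$, up to an additive constant from the non-singular bands and from any eigenvalues of $H_\pm$ accumulating at the threshold (the latter handled, as in the proof of Theorem \ref{thssf}, by a Birman--Schwinger argument). The main obstacle will be making rigorous the concentration bound on $u_n(\cdot,k)^2$ uniformly in $k$, not for the explicit quasimode $u^{\qm}_n$ but for the true eigenfunction $u_n$; this is precisely where the quantitative control \eqref{E:spectralTbis}, polynomially small in $k^{-1}$, becomes crucial, so that replacing $u_n^{\qm}$ by $u_n$ does not spoil the $O(k^{M+2-m})$ bound.
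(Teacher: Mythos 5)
You correctly identify the crucial scaling — combining $|E_n'(k)|^{-1} = O(k^{M+1})$ from Corollary \ref{C:asymptBexplicit} with the Gaussian localization of $u_n$ around $x_k \sim k/b_+$ and the decay $V\lesssim\langle x,y\rangle^{-m}$, to get $O(k^{M+2-m})$, bounded precisely when $m>M+2$. This is the same arithmetic that drives the paper's proof. But the reduction step has a genuine gap: you reduce the boundedness of the SSF to a trace-norm bound on $\tfrac{1}{\pi}V^{1/2}\operatorname{Im}(H_0-E-i0)^{-1}V^{1/2}$, i.e.\ on $\operatorname{Im}T(E+i0)$. That is not what controls the SSF. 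Pushnitski's formula \eqref{pssf} involves $n_\mp(1;\operatorname{Re}T(E+i0)+t\operatorname{Im}T(E+i0))$, and by Proposition \ref{pr1} the operator $\operatorname{Im}T(E+i0)$ already has uniformly bounded (finite) rank near a threshold, so that term contributes $O(1)$ with no hypothesis on $m$ versus $M$ — it is handled once and for all in Theorem \ref{thssf}. The genuinely singular object is $\operatorname{Re}T(E+i0)$, which is the \emph{principal-value} integral $T_{n,A}(\lambda)=\mathrm{p.v.}\int_0^A\frac{G_n(\varrho_n(s))^*G_n(\varrho_n(s))}{s-\lambda}\varrho_n'(s)\,ds$. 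Knowing that the integrand's trace at the single point $s=\lambda$ is $O(\lambda^{(m-2-M)/M})$ does not bound this operator: a principal-value integral of a merely bounded density can still be large. The paper controls it with a Lipschitz-in-$k$ estimate on $G_n^*G_n$ (Lemma \ref{boch_lipt}), a quantitative p.v.\ estimate (Lemma \ref{estiPV}), and the three-piece counting-function splitting of Lemma \ref{L:isolatesing}; the quantity you actually have to bound is the $\mathfrak S_2$-norm of $S_V[(0,\lambda-\epsilon_\lambda)]$, which is $\int_0^{\lambda-\epsilon_\lambda}|s-\lambda|^{-1}g(s)\,ds$, not just $g(\lambda)$.

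A secondary inaccuracy: bounding $\|T(E)\|_1$ for the \emph{full} resolvent sandwich would indeed suffice, but under \eqref{cd2} that operator is only Hilbert--Schmidt, not trace-class, so that route is not available either. Your plan needs to be redirected from the imaginary part to the real part, and the scaling computation you found needs to be run inside the Chebyshev/Weyl counting-function machinery (inequality \eqref{cheby} applied to $\|S_V[(0,\lambda-\epsilon_\lambda)]\|_2^2$, with the cutoff $\epsilon_\lambda=\lambda^\theta$, $\theta>1+2/M$, feeding Lemma \ref{L:isolatesing}) rather than on a pointwise trace.
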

 This theorem covers four different cases. One of these was already stated, under weaker asumption on $V$, in \eqref{SSFboundedthresh}, and is conceptualy very different. 
 For the proof of this theorem is not necessary that  condition \eqref{cd2} holds isotropically. It is enough if for some $x_0\in \R$ it holds for $x>x_0$  with $m>M+2$, and for $x<x_0$ with $m>2$.  Notice that the conditions of the Theorem include the case where $V$ is compactly supported.

 To obtain the asymptotic behavior of $\xi$ in the unbounded case, we need to impose more restrictive conditions on $V$. First, \Bk we will assume that  for any pair $(\alpha, \beta) \in \mathbb{Z}_+^2$ ($\mathbb{Z}_+:=\{0,1,2,...\}$),  there exists a positive constant $C_{\alpha,\beta}$ satisfying 
\bel{apr14}
|\partial_x^\beta\partial_y^\alpha V(x,y)|\leq C_{\alpha, \beta}\langle x, y \rangle^{-m-\alpha-\beta} \quad \mbox{ for all} \,\,(x,y)\in \R^2,
\ee
where $m>2$. 

Next, let  $f:\R^2\to\R$ be a measurable function and  
for $\lambda >0$ set 
$$
N_0(\lambda,f):=\frac{1}{2\pi}vol\{(x,y)\in \R^2; f(x,y)> \lambda, x>0\},
$$
where $vol$ denotes  the Lebesgue measure in $\R^2$.

 Then, we will suppose  that  for some positive constants $C$ and  $\lambda_0$
\bel{jul10a}
N_0(\lambda,V)\geq C \lambda^{-2/m}, \quad 0<\lambda<\lambda_0,
\ee
and  that  $N_0(\lambda,V)$  satisfies a homogeneity  condition of the form
\bel{jul10}
\lim_{\epsilon \downarrow 0}\limsup_{\lambda\downarrow 0}\,\lambda^{2/m}\left(N_0(\lambda(1-\epsilon),V)-N_0(\lambda(1+\epsilon),V)\right)=0.
\ee

 Conditions \eqref{apr14}, \eqref{jul10a} are necessary to have some control of the function $V$ at infinity, from above and below (Note that if $V$ satisfies  \eqref{apr14}, then $N_0(\lambda,V) = O(\lambda^{-\frac{2}{m}})$,  as $\lambda\to0$). Meanwhile,  \eqref{jul10} is a regularity condition. They are usually assumed in the study of this kind of asymptotics (see for example \cite{dauro, rai1, IwaTam98, shi2}). \Bk {The existence of a limit of the form 
$$\lim_{(x,y)\to \infty} \langle x, y\rangle^{m}V(x,y)=\omega\left(\frac{(x,y)}{|(x,y)|}\right),$$   where  $\omega: S^1\to [\epsilon, \infty)$ is smooth and  $\epsilon>0$, guarantees that  $V$   satisfy \eqref{apr14}, \eqref{jul10a} and \eqref{jul10}.
 


\begin{theorem}\label{thssf2}
Assume that   $V\geq0$ satisfies \eqref{apr14} with $m>2$ and   $N_0(\lambda, V)$ satisfies  \eqref{jul10a}, \eqref{jul10}. Suppose also that $b$  satisfies \eqref{b_bound} and  \eqref{E:HypB} \ with $M>m$. Then,  the following asymptotic formula  at the thresholds holds true: 
\bel{asym_ssf}
\xi(\overline{{\mathcal E}}_n\pm\lambda;H_\pm,H_0)=\pm b_+\,{N_0(\lambda,V)}(1+o(1)), \quad \lambda \downarrow 0.
\ee

\end{theorem}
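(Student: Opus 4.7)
The proof follows the strategy developed by Raikov \cite{rai1} and adapted to the Iwatsuka model in \cite{Mir16}, combining the Pushnitski-type representation \eqref{pssf} of the SSF with precise control of the spectral projectors of $H_0$ near the threshold. The main idea is that the fiber decomposition \eqref{direc} together with the asymptotics of the band functions and eigenfunctions from Section~\ref{sec_asymp_band} reduces the analysis to a Berezin--Toeplitz-type operator in the $n$-th Landau level of a constant magnetic field $b_+$, restricted to the half-plane $x>0$. Upon that reduction, classical eigenvalue asymptotics for such Toeplitz operators yield the claimed formula.

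First, I would use the Pushnitski representation \eqref{pssf} to express $\xi(\overline{\mathcal{E}}_n\pm\lambda;H_\pm,H_0)$ in terms of the eigenvalue counting function of a Birman--Schwinger-type operator built from $V^{1/2}$ and the resolvent of $H_0$. Via the isometries $\cU_m$ and projectors $\pi_m$ this operator splits into a sum over bands. For $m\neq n$, either $\overline{\cE}_m<\overline{\cE}_n$ (in which case the $m$-th band is uniformly bounded away from $\overline{\cE}_n$) or, by assumption \eqref{IsolatedT}, $\underline{\cE}_m>\overline{\cE}_n$. Each such case contributes $O(1)$ at $\overline{\cE}_n\pm\lambda$ by Theorem~\ref{thssf}. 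The leading singular contribution thus comes exclusively from the $n$-th band, and it suffices to study the eigenvalue asymptotics of the effective operator $\mathcal{A}_n:=\cU_n^*\cF V\cF^*\cU_n$ restricted to the spectral window $\{|E_n-\overline{\cE}_n|<\lambda\}$, equivalently $k\geq\varrho_n(\lambda)$.

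Second, using \eqref{E:spectralTbis} together with \eqref{E:defvn}, I would replace the true eigenfunctions $u_n(\cdot,k)$ by the quasi-modes $u_n^{\qm}(\cdot,k)$ and then, using $b_k\to b_+$ and $x_k=k/b_+(1+o(1))$, by the constant-field Hermite functions $b_+^{1/4}\Psi_n(b_+^{1/2}(\cdot-k/b_+))$. Under \eqref{E:HypB} and \eqref{E:asymptvarrho}, one has $k\gtrsim\lambda^{-1/M}$ on the spectral window, so the errors $\epsilon(k)$, $|b_k-b_+|$ and $|x_k-k/b_+|$ are all at most $O(\lambda^{(M+2)/M})$. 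On the other hand the relevant eigenvalues of $\mathcal{A}_n$ are of size $\lambda$ and supported in $\{V>\lambda\}$, a set of diameter $\sim\lambda^{-1/m}$. The hypothesis $M>m$ is precisely what guarantees that the quasi-mode replacement errors are of lower order than $\lambda\cdot N_0(\lambda,V)$. After the replacement, $\mathcal{A}_n$ becomes unitarily equivalent, up to a lower-order perturbation, to $p_n^{b_+}Vp_n^{b_+}$, where $p_n^{b_+}$ denotes the spectral projector on the $n$-th Landau level of the constant field $b_+$; the constraint $k\to+\infty$ translates, through $x_k\sim k/b_+$, into the restriction to $x>0$.

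Third, the asymptotic distribution of eigenvalues of such Toeplitz operators $p_n^{b_+}Vp_n^{b_+}$ is known from \cite{rai1,raiwar}: under the regularity hypotheses \eqref{apr14}, \eqref{jul10a}, \eqref{jul10}, the eigenvalue counting function equals $\frac{b_+}{2\pi}\mathrm{vol}\{V>\lambda\}(1+o(1))$ as $\lambda\downarrow 0$; restricting to $x>0$ gives $b_+ N_0(\lambda,V)(1+o(1))$. Combining this with the sign convention for $H_+$ (near $\overline{\cE}_n+\lambda$) versus $H_-$ (near $\overline{\cE}_n-\lambda$) that arises in the Pushnitski formula yields the announced asymptotics \eqref{asym_ssf}.

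The main obstacle is the rigorous justification of the quasi-mode replacement in the second step, particularly controlling the off-diagonal part of the integral kernel of $\mathcal{A}_n$ uniformly in $\lambda$. This requires combining the $H^1$-type estimate \eqref{E:spectralTbis} with the smoothness and decay of $V$ afforded by \eqref{apr14} (which ensures sufficient decay of its partial Fourier transform in the $y$-variable), and then using $M>m$ together with the homogeneity condition \eqref{jul10} to absorb all remainders into the $o(N_0(\lambda,V))$ error term.
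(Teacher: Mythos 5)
Your overall strategy — Pushnitski's representation, isolate the $n$-th band, reduce to an effective operator, obtain a semiclassical volume — matches the paper's structure, but the middle step (the reduction) is carried out by a genuinely different method, and that method has concrete gaps as you've written it.

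The paper does not replace the Iwatsuka eigenfunctions $u_n(\cdot,k)$ by constant-field Hermite functions at all. Instead it uses the exact pseudodifferential identity $Q_V^*Q_V = Op^W(w_n)$ with $w_n(x,y) = V(a^{-1}(x),-y) + R_1(x,y)$, $R_1 \in \mathcal{S}_0^{m+1}$ (Shirai, \cite[Lemma 5.1]{shi2}), see \eqref{shirai}--\eqref{sep25}, together with the $\Psi$DO eigenvalue asymptotics of \cite{dauro}. The factor $b_+$ in \eqref{asym_ssf} then arises as the limit of the Jacobian $b(x)$ in the change of variables $x \mapsto a(x)$ (this is exactly what formula \eqref{iwa_tam} records), and the condition $M>m$ ensures that the region where $b(x)$ differs appreciably from $b_+$ contributes only a lower-order volume. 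This route avoids comparing $u_n$ to Landau eigenfunctions entirely; its work goes into the principal-value estimate Lemma \ref{estiPV} and the cut-off lemmas \ref{L:isolatesing}--\ref{effweyl}, which you do not address.

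Your approach — approximate $u_n$ by $b_+^{1/4}\Psi_n(b_+^{1/2}(\cdot - k/b_+))$ and quote the Landau--Toeplitz asymptotics of \cite{rai1,raiwar} — would need several corrections to have a chance of being rigorous. First, your error estimates are wrong: under \eqref{E:HypB} and $k\sim\lambda^{-1/M}$ one has $b_+-b_k = x_k^{-M} \sim \lambda$, not $O(\lambda^{(M+2)/M})$; only $\epsilon(k)$ is that small. Second, and more seriously, $x_k - k/b_+$ is $O(1)$ for $M>1$ (equal to $a^{-1}(k) - k/b_+ \to -a(0)/b_+ + b_+^{-1}\int_0^\infty(b_+ - b)$), grows like $\log k$ for $M=1$, and diverges polynomially for $M<1$; it is never $o(1)$. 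Hence $\Psi_n(b_+^{1/2}(x-k/b_+))$ is not close to $u_n(x,k)$ in $L^2$, and the Hilbert--Schmidt difference $Q_V - Q_V^{b_+}$ you would need to be small is in fact of order one. You would have to work in a shifted Landau gauge centred at $x_k$ rather than $k/b_+$, and then a half-plane restriction $\{x>0\}$ of $p_n^{b_+}Vp_n^{b_+}$ is not literally what appears; what appears is the $k$-window $k>\varrho_n(\lambda)$, which is related to but not identical with an $x$-cutoff, since the Landau projector smears over an $O(1)$ neighbourhood. These points are where the technical weight of the argument sits, and they are precisely what the paper's $\Psi$DO machinery circumvents.
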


\begin{remark} Equation  \eqref{asym_ssf} is similar to the results obtained in \cite{brumi1} for  the SSF of some magnetic Schr\"odinger operators defined in a Half-plane, and in \cite{rai1}, \cite{Mir16} for the eigenvalue counting function of the perturbed Landau Hamiltonian and Iwatsuka Hamiltonian, respectively.

Moreover,  \eqref{asym_ssf} could be written in the form
\bel{iwa_tam}\xi(\overline{{\mathcal E}}_n\pm\lambda;H_\pm,H_0)
=\pm\int\displaylimits_{\displaystyle{\left\{(x,y)\in \R^2;\, V(x,y)>\lambda, \,x>0\right\}}}b(x)\,\rd x \rd y \,(1+o(1)), \quad \lambda \downarrow 0,\ee
which is in accordance with the formula obtained for the function that counts number of  discrete  eigenvalues  of the  perturbed Pauli operator near zero (see \cite{IwaTam98}). This formula 
 shows that the asymptotic behavior of $\xi$ depends on the whole magnetic field, but if $b(x)$ converges to $b_+$ fast enough it is only necessary to consider the limit $b_+$. On the contrary, if the convergence of $b(x)$ to $b_+$ is slow in comparison with the decaying rate of $V$, the formula \eqref{iwa_tam} is no longer true, as is shown by  Theorem \ref{thVcompact}, because the term in the right hand side of \eqref{iwa_tam} is unbounded  as $\lambda\downarrow 0$, for example when $V(x,y)=\langle x, y \rangle^{-m}$ \Bk and $m>M+2$.  Then Theorem \ref{thVcompact} is somehow a non-semiclassical result. \end{remark}

\begin{remark} Using the results of \cite{hispopsoc} combined  with the methods of  \cite{brumi1},  it should be possible to prove \eqref{asym_ssf} for a   magnetic field satisfying $b(x)=b_+$ for $x>0$, and  $b(x)=b_-$  for  $x<0$. Note that this magnetic field  does not satisfy \eqref{E:HypB} nor \eqref{b_bound2}. 
\end{remark}

The proofs of Theorems \ref{thssf},\, \ref{thVcompact} and \,\ref{thssf2} appear  in the following two sections. They are inspired by \cite{brumi1} and \cite{bsoccrai}.

\section{A preliminary limiting absorption principle and other results } \label{ss21} 
 
Let $(\mathfrak{S}_p,\|\cdot\|_p)$, $p\geq1$, be the Schatten -- von Neumann class of compact operators. Define the operator valued function $G_n:\R\to \mathfrak{S}_2(L^2(\R^2);\C)$, by
$$G_n(k)f:=\frac{1}{\sqrt{2\pi}} \int_{\R}\int_\R 
e^{-iky}V^{1/2}(x,y)u_n(x,k)f(x,y)\,\rd x\rd y, \quad f \in L^2(\R^2),$$
the function $u_n$ being the normalized eigenfunction of $h(k)$ as in section \ref{sec_asymp_band}. 

\begin{lemma}\label{boch_lipt}For any $n\in\N$:
\begin{enumerate}
\item The  function $\|G_n^*G_n\|$ is in  $L^1(\R),$ 
\item The operator function $G_n^*G_n$ satisfies a Liptschitz condition at infinity:  There exist $C>0$ and $k_{0}\in \R$ such that for all $k$ and $k'$ in $(k_{0},\infty)$
$$ \|G_n^*G_n(k)-G_n^*G_n(k')\| \leq C |k-k'|.$$
\end{enumerate}
\end{lemma}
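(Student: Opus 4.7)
The plan is to exploit the rank-one structure of $G_n^*G_n(k)$. The operator $G_n(k) \colon L^2(\R^2) \to \C$ has the form $f \mapsto \langle f, \kappa_k\rangle$ with kernel $\kappa_k(x,y) := (2\pi)^{-1/2} e^{iky} V^{1/2}(x,y) u_n(x,k)$, so $G_n^* G_n(k) = |\kappa_k\rangle \langle \kappa_k|$ is a rank-one non-negative operator on $L^2(\R^2)$ whose operator norm is
$$\|G_n^* G_n(k)\| = \|\kappa_k\|_{L^2(\R^2)}^2 = \frac{1}{2\pi}\int_{\R^2} V(x,y)\, u_n(x,k)^2 \, dx\, dy.$$

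For (1), I would apply Fubini to obtain
$$\int_\R \|G_n^* G_n(k)\| \, dk = \frac{1}{2\pi}\int_{\R^2} V(x,y)\, \rho_n(x)\, dx\, dy, \qquad \rho_n(x) := \int_\R u_n(x,k)^2\, dk,$$
and recognise the right-hand side as $\operatorname{Tr}\bigl(V^{1/2}\pi_n V^{1/2}\bigr)$, since a direct computation of the integral kernel of $\pi_n=\cF^*\Pi_n\cF$ from the fiber decomposition gives $K_{\pi_n}(x,y;x,y) = \rho_n(x)/(2\pi)$ on the diagonal. Trace-class membership follows from the factorisation
$$V^{1/2}\pi_n = \bigl(V^{1/2}(H_0-E_0)^{-1}\bigr)\cdot\bigl((H_0-E_0)\pi_n\bigr),$$
whose first factor is Hilbert--Schmidt by the remark following \eqref{cd2} and whose second factor is bounded because $E_n(k)\leq\overline{\mathcal E}_n$ uniformly on $\R$. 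Hence $V^{1/2}\pi_n V^{1/2}=(V^{1/2}\pi_n)(V^{1/2}\pi_n)^*$ is trace class and (1) follows.

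For (2), I would use the rank-one identity
$$G_n^* G_n(k) - G_n^* G_n(k') = |\kappa_k - \kappa_{k'}\rangle\langle \kappa_k| + |\kappa_{k'}\rangle\langle \kappa_k - \kappa_{k'}|,$$
which gives $\|G_n^* G_n(k) - G_n^* G_n(k')\| \leq (\|\kappa_k\| + \|\kappa_{k'}\|)\, \|\kappa_k - \kappa_{k'}\|_{L^2}$. Since the prefactor is uniformly bounded on $(k_0,+\infty)$ (it actually vanishes as $k\to+\infty$ thanks to the concentration of $u_n(\cdot,k)$ near $x_k$), it suffices to establish $\|\kappa_k - \kappa_{k'}\|_{L^2} \leq C|k-k'|$. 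I would split this difference into the $e^{iky}$-contribution and the $u_n(\cdot,k)$-contribution. The latter is handled by perturbation theory: differentiating $h(k) u_n = E_n(k) u_n$ produces $\partial_k u_n = (h(k)-E_n(k))^{-1}\bigl((E_n'(k) + 2(a-k))u_n\bigr)$ on $\{u_n\}^\perp$, and combining the spectral gap for $k$ large (Section~\ref{sec_asymp_band}) with $\|(a-k)u_n\|^2 \leq E_n(k) \leq \overline{\mathcal E}_n$ gives $\sup_{k\geq k_0}\|\partial_k u_n\|_{L^2(\R)} < \infty$; together with $\int_\R V(x,y)\, dy \leq C(1+|x|)^{1-m}$ (valid since $m>2$), a Cauchy--Schwarz argument in $\lambda\in[k',k]$ yields a clean $|k-k'|$ bound for this piece.

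The main obstacle is the $e^{iky}$-contribution. Using the pointwise bound $|e^{iky} - e^{ik'y}| \leq |k-k'||y|$ reduces matters to controlling the moment
$$\int_{\R^2} y^2 V(x,y)\, u_n(x,k)^2 \, dx\, dy \quad \text{uniformly for } k \geq k_0.$$
My plan is to exploit the quasimode approximation of Section~\ref{SS:QM}: since $u_n(x,k)^2 \approx b_k^{1/2}\Psi_n\bigl(b_k^{1/2}(x-x_k)\bigr)^2$ is sharply concentrated near $x = x_k\to+\infty$, the change of variable $t = b_k^{1/2}(x-x_k)$ combined with the rapid decay of the Hermite functions and the decay of $V$ imposed by \eqref{cd2} should allow one to absorb the $y^2$ factor and close the estimate. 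Combining the two contributions then produces the Lipschitz bound on $(k_0,+\infty)$.
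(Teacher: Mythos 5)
Your proof of part (1) is correct and takes a genuinely different route from the paper. The paper bounds $\|G_n^*G_n(k)\|_2$ directly by exploiting the Gaussian concentration estimate $\sup_{x,k}|u_n(x,k)e^{\tau(x-x_k)^2}|<\infty$ to show that $k\mapsto\int_{\R}\langle x\rangle^{-m/2}e^{-2\tau(x-x_k)^2}\,dx$ is integrable by Young's inequality, whereas you instead identify $\int_\R\|G_n^*G_n(k)\|\,dk$ with $\operatorname{Tr}(V^{1/2}\pi_nV^{1/2})$ and establish trace-class membership by factoring through $V^{1/2}(H_0-E_0)^{-1}$. Your argument is cleaner and more structural; both are fine.

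For part (2) there is a genuine gap. You reduce the claim to the uniform Lipschitz estimate $\|\kappa_k-\kappa_{k'}\|_{L^2}\le C|k-k'|$, arguing that the prefactor $\|\kappa_k\|+\|\kappa_{k'}\|$ is merely bounded. But the phase-factor contribution to $\|\kappa_k-\kappa_{k'}\|^2$, controlled by $|k-k'|^2\int_{\R^2}V(x,y)\,y^2\,u_n(x,k)^2\,dx\,dy$, is \emph{not} uniformly bounded in $k$ when $2<m\le 3$: because $u_n(\cdot,k)^2$ concentrates near $x_k$ and $\int_\R\langle x_k,y\rangle^{-m}y^2\,dy\sim x_k^{3-m}$, this moment grows like $x_k^{3-m}\to\infty$. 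So the claimed intermediate estimate $\|\kappa_k-\kappa_{k'}\|\le C|k-k'|$ is simply false for $m<3$, and the quasimode change of variables you invoke cannot repair it — the divergence comes from the $y$-integral, not the $x$-integral. The correct mechanism is the one you already half-noticed in parentheses: $\|\kappa_k\|^2=\tfrac{1}{2\pi}\int Vu_n^2\sim x_k^{1-m}$ decays, and the product $\|\kappa_k\|\cdot\|\kappa_k-\kappa_{k'}\|\lesssim x_k^{(1-m)/2}\cdot|k-k'|\,x_k^{(3-m)/2}=|k-k'|\,x_k^{2-m}$ is uniformly controlled precisely because $m>2$. You must therefore bound the \emph{product}, not the two factors separately, e.g.\ by differentiating $|\kappa_k\rangle\langle\kappa_k|$ in $k$ and estimating $\|\partial_k\kappa_k\|\,\|\kappa_k\|\lesssim x_k^{2-m}$, rather than demanding a uniform Lipschitz bound on $\kappa_k$ alone. (The paper avoids the issue by working in Hilbert--Schmidt norm of the rank-$\le2$ difference, where the two moments $\int Vy^2u_n^2$ and $\int Vu_n^2$ appear multiplied together; it then controls $\|\Pi_n(k)-\Pi_n(k')\|_2$ through a resolvent contour-integral argument rather than through $\partial_ku_n$.)
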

\begin{proof} Since $G_n^*G_n(k)$ is of finite rank, we can use the different norms indistinctly. The first statement follows using  that   for some $\tau>0$ \bel{3mar17c}\sup_{(x,k)\in \R^2}|u_n(x,k)e^{\tau(x-x_{k})^2}|<\infty,\ee  \cite[Lemma 4.7]{shi2}, and \eqref{cd2}, which imply  
$$\ba{ll}
||G_{n}(k)^*G_{n}(k)||^2_{2}
&=\displaystyle{\frac{1}{2\pi}\int_{\R^2}\int_{\R^2} V(x,y)V(x',y')\; |u_n(x,k)u_n(x',k)|^2\,\rd x\rd x'\,\rd y \rd y'}\\[1em]
&\leq\displaystyle{\frac{1}{2\pi}\left(\int_\R\langle y\rangle
^{-m/2}\,dy\right)^2\left(\int_{\R}\langle
x\rangle^{-m/2}u_n(x,k)^2\,\rd x\right)^2}
\\[1em]
&\leq C\left(\displaystyle{ \int_{\R} \langle
x\rangle^{-m/2}e^{-2\tau(x-x_{k})^2}\rd x}\right)^2.\\
\ea$$
The last integral  is in $L^1(\R_k)$ by Young's inequality and $m>2$.


For the second statement, recalling the definition of $\Pi_n$ at the beginning of section \ref{S:bulkstates}, we can see that
$$\ba{ll}
&\|G_n(k)^*G_n(k)-G_n(k')^*G_n(k')\|_{2}\\[.2em]
\leq&\displaystyle{\frac{1}{2\pi}\left(\sup_{x \in \R}\int_\R V(x,y)\,\rd y\right)\|\Pi_n(k)-\Pi_{n}(k')\|_{2}}\\[1em]
\leq &\displaystyle{C\oint_\gamma} \|(h(k)-\omega)^{-1}-(h(k')-\omega)^{-1}\|\,d|\omega|\\[.5em]
\leq&\displaystyle{ C |k-k'|\oint_\gamma} \left\|(h(k)-\omega)^{-1}(a(x)-k')(h(k')-\omega)^{-1}\right\|d|\omega|,
\ea$$
 where the contour  $\gamma$ is such that the only eigenvalues of $h(k)$ and $h(k')$ inside it are $E_{n}(k)$ and $E_{n}(k')$. In order to find a uniform bound of   $\oint_\gamma \left\|(h(k)-\omega)^{-1}(a(x)-k')(h(k')-\omega)^{-1}\right\|d|\omega|$ for $k,k'$ big,
  it is enough to use the  inequalities
$$
\begin{array}{ll}&\|(a(x)-k)(h(k)-\omega)^{-1}g\|_{L^2(\R)}\\
\leq &\|(h(k)-\omega)^{-1}g\|_{L^2(\R)}+\|(a(x)-k)^2(h(k)-\omega)^{-1}g\|_{L^2(\R)}\\
\leq &\|(h(k)-\omega)^{-1}g\|_{L^2(\R)}+
C \|(h(k)+2b_+)(h(k)-\omega)^{-1}g\|_{L^2(\R)},
\end{array}$$
valid for any $g\in L^2(\R)$. In the last inequality we have used \cite[Theorem 1]{evgi}.\end{proof}
By Lemma \ref{boch_lipt}, for any $z\in \C_+$ and $l\in \N$,  it is posible to define
the integral 
$\int_\R\frac{G_l(k)^*G_l(k)}{E_l(k)-z}\,\rd k,$
which is a trace class operator. On the other side, we can define 
$$ T(z):= V^{1/2} (H_0-z)^{-1} V^{1/2},$$
and, from \eqref{direc} it is easy to see that
\bel{16mar17a}T(z)=\sum_{l\in\N}\int_\R\frac{G_l(k)^*G_l(k)}{E_{ l}(k)-z}\,\rd k.\ee
 We will show that $T(z)$ admits a limit as  $z\downarrow E$, for $E\in\Bk\sigma(H_{0})\setminus \cT_{H_0}$. This limit  is  linked to the SSF through the Pushnitski representation \eqref{pssf} given below.

Recall that  $\varrho_l:(0,\overline{\mathcal E}_l-\underline{\mathcal E}_l]\to \R$ is defined  as the inverse function of $\overline{\mathcal E}_l-E_l$. Therefore,   for any $l \in \N$:
$$\forall z\in \C\setminus\sigma(H_{0}), \quad \int_{\R}\frac{G_l(k)^*G_{l}(k)}{E_{l}(k)-z}\,\rd k=\int_{0}^{\overline{\mathcal E}_l-\underline{\mathcal E}_l}\frac{G_l(\varrho_{l}(s))^*G_l(\varrho_{l}(s))}{s+z-\overline{\cE}_{l}}\varrho_{l}'(s) \rd s.$$
\begin{proposition}\label{pr1}
Assume that $V$ satisfies \eqref{cd2}. Then, for $E\in\R\setminus\cT_{H_0} $ the limit 
$$
\lim_{\delta\downarrow 0}T(E+i\delta) = T(E+i0)
$$
exists in the norm sense.  Moreover:
\begin{enumerate}
\item[I.] The operator function ${\rm Im}\,T(\cdot+i0)$  is continuous in the trace class norm in $\R\setminus\cT_{H_0}$. Further, the rank of ${\rm Im}\,T(E+i0)$ is finite and  constant between two consecutive  thresholds.
\item[II.] The real part ${\rm Re}\,T(\cdot+i0)$   is continuous in the operator norm in $\R\setminus\cT_{H_0}$. 
\end{enumerate}
\end{proposition}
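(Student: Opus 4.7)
My plan is to use the fiber decomposition \eqref{16mar17a} and reduce the analysis to the finitely many bands that can produce a singularity at $E$. Since $\underline{\mathcal E}_l = b_-(2l-1) \to \infty$, the set $L(E) := \{l \in \N : E \in [\underline{\mathcal E}_l, \overline{\mathcal E}_l]\}$ is finite for every fixed $E$. For $l \notin L(E)$, one has $|E_l(k) - E| \geq c > 0$ uniformly in $k$, so in a small complex disk $D$ around $E$ with $\mathrm{Im}\,z \geq 0$ the denominator is bounded away from zero. Combined with the $L^1$ bound of Lemma \ref{boch_lipt} and the summability in $l$ of $\|G_l^*G_l\|_{L^1(\R;\mathfrak{S}_1)}$ (which reflects the trace class nature of $V^{1/2}(H_0+C)^{-1}V^{1/2}$ granted by \eqref{cd2}), the tail $\sum_{l \notin L(E)} \int_\R G_l^*G_l/(E_l - z)\,\rd k$ is continuous in $z \in D$ in trace norm.

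It remains to treat each of the finitely many $l \in L(E)$. For each such $l$, change variables $s = \overline{\mathcal E}_l - E_l(k)$, set $s_0 := \overline{\mathcal E}_l - E$ and $F_l(s) := G_l(\varrho_l(s))^* G_l(\varrho_l(s)) \varrho_l'(s)$, so the corresponding integral becomes
$$\int_0^{\overline{\mathcal E}_l - \underline{\mathcal E}_l} \frac{F_l(s)}{s - s_0 + i\delta}\,\rd s.$$
Since $E \notin \cT_{H_0}$, the point $s_0$ is interior to the interval of integration, and since $E_l$ is strictly increasing, $\varrho_l$ is $C^1$ near $s_0$ with $\varrho_l'(s_0) = -1/E_l'(k_l(E))$ finite, where $k_l(E) := E_l^{-1}(E)$. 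The Lipschitz property of $k \mapsto G_l(k)^* G_l(k)$ from Lemma \ref{boch_lipt} thus transfers, via the smoothness of $\varrho_l$, to Lipschitz continuity of the rank-one operator-valued function $F_l$ near $s_0$. Applying the classical Sokhotski--Plemelj identity together with Privalov's theorem for Hilbert transforms of operator-valued Lipschitz densities yields, as $\delta \downarrow 0$,
$$\int \frac{F_l(s)}{s - s_0 + i\delta}\,\rd s \longrightarrow \mathrm{p.v.}\!\int \frac{F_l(s)}{s - s_0}\,\rd s - i\pi F_l(s_0),$$
the imaginary part converging in trace norm (as $F_l(s_0)$ has finite rank) and the real part in operator norm.

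Item I then follows by identifying
$$\mathrm{Im}\,T(E+i0) = \pi \sum_{l \in L(E)} \frac{G_l(k_l(E))^* G_l(k_l(E))}{|E_l'(k_l(E))|},$$
a finite sum of finite-rank operators. As $E$ varies within a connected component of $\R \setminus \cT_{H_0}$, the set $L(E)$ is constant, $E \mapsto k_l(E)$ is $C^1$ (since $E_l'$ does not vanish away from thresholds), and $k \mapsto G_l(k)^* G_l(k)$ is continuous by Lemma \ref{boch_lipt}, so $\mathrm{Im}\,T(\cdot+i0)$ is continuous in $\mathfrak{S}_1$ with locally constant rank. Item II is handled analogously: the operator-norm continuity of the principal-value term in $s_0$ is again supplied by Privalov's theorem applied to the Lipschitz density $F_l$, and the tail sum in $l$ was already noted to be continuous in operator norm.

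The main obstacle is item II: the principal value integrals cannot be expected to depend trace-norm continuously on $E$, since the Hilbert transform does not preserve trace class in general, so one must work in operator norm and leverage the quantitative Lipschitz estimate of Lemma \ref{boch_lipt}. A secondary technical point is justifying the trace-norm convergence of the tail sum in $l$, which requires summability of $\|G_l^*G_l\|_{L^1(\R;\mathfrak{S}_1)}$ and ultimately rests on the decay $m>2$ in \eqref{cd2}.
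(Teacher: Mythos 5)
Your proposal follows essentially the same route as the paper: split $T(z)$ into a tail over bands not meeting $E$ plus a local contribution from the finitely many bands that do, apply a Sokhotski--Plemelj splitting to the local part, and identify $\operatorname{Im} T(E+i0)$ as a finite sum of finite-rank operators whose rank is $\#L(E)$. Two small inaccuracies are worth flagging, though neither invalidates the argument. First, the claim that the tail converges and is continuous in \emph{trace norm}, backed by summability in $l$ of $\|G_l^*G_l\|_{L^1(\R;\mathfrak{S}_1)}$, is not warranted: under \eqref{cd2} with $m>2$, $V^{1/2}(H_0-E_0)^{-1}V^{1/2}$ is Hilbert--Schmidt but not trace class, and the paper accordingly only obtains Hilbert--Schmidt convergence for the tail $V^{1/2}P_n(H_0-z)^{-1}P_nV^{1/2}$. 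This is harmless because the tail is self-adjoint in the limit (so contributes nothing to $\operatorname{Im}T$) and for item II only operator-norm continuity is needed; you should simply not claim trace-norm there. Second, the Lipschitz regularity of $k\mapsto G_l(k)^*G_l(k)$ near the finite point $k_l(E)$ that you invoke to run Privalov's theorem is better attributed to local analyticity from standard perturbation theory (simple discrete eigenvalues, compact resolvent), rather than to Lemma \ref{boch_lipt}, whose Lipschitz estimate is stated only for $k,k'$ large; the lemma's real role here is the $L^1$ integrability controlling the behavior of $F_l(s)=G_l(\varrho_l(s))^*G_l(\varrho_l(s))\varrho_l'(s)$ near the endpoints $s\to 0$ and $s\to \overline{\mathcal E}_l-\underline{\mathcal E}_l$, where $\varrho_l'$ blows up.
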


\begin{proof}

Define  $\gamma_n^-$ as  the nearest threshold below $\overline{{\mathcal E}}_n$ i.e.,
${{ \gamma}}_n^-:=\max\{\gamma\in \cT_{H_0}, \gamma <\overline{{\mathcal E}}_n\}$,
and consider  the set 
\bel{D:Ln}
L_n:=\{l\in\N;(\gamma_n^-,\overline{{\mathcal E}}_n) \cap{E_l(\R)}\neq\emptyset\}.
\ee
 Note that, for instance,  $L_{n}=\{n\}$, when $\gamma^-_{n}=\underline{\cE_{n}}$  and  $L_{n}=\{n,n+1\}$, when $\gamma^-_{n}=\underline{\cE_{n+1}}$. 

Seting $\gamma_1^-:=-\infty$, we have $\R\setminus\mathcal{T}_H=\bigcup_{n\in\N}(\gamma_n^-,\overline{\cE}_{n})\cup(\overline{\cE_{n}},\gamma_{n+1}^-)$. We will show the  proof of  the proposition on the set $(\gamma_n^-,\overline{\cE}_{n})$. The other intervals  can be treated  similarly.

Define the projection $P_n:=\sum_{l\not\in L_n}\pi_l$, where $\pi_l$ was defined at the beginning of section \ref{S:bulkstates}. Then, 
$$\sum_{l\not\in L_n}\int_\R\frac{G_l(k)^*G_l(k)}{E_l(k)-z}\,\rd k=V^{1/2}P_n(H_0-z)^{-1}P_nV^{1/2}$$
 depends analytically on  $z \in \C^{+}\cup (\gamma_n^-,\overline{{\mathcal E}}_n)$, 
  in consequence, the limit 
\bel{16mar17}  \lim_{\delta\downarrow 0}V^{1/2}P_n(H_0-E-i\delta)^{-1}P_nV^{1/2}=V^{1/2}((H_0-E)P_n)^{-1}V^{1/2}\ee
exists for $E\in(\gamma_n^-,\overline{{\mathcal E}}_n)$. The operator in the rhs of the last  equality  is a self-adjoint Hilbert-Schmidt  operator 
since $T(z)$ is Hilbert-Schmidt. Additionally, it  is continuous with respect to $E$. 

 Further, thanks to Lemma \ref{boch_lipt} and the definition of $L_n$, for $E\in (\gamma_n^-,\overline{{\mathcal E}}_n)$  we have the following  limit: 
\bel{Decomposeinband}
\ba{ll}\displaystyle{\lim_{\delta\downarrow 0}\sum_{l\in L_n}\int_\R\frac{G_l(k)^*G_l(k)}{E_l(k)-E -  i\delta}\,\rd k}&=\displaystyle{\sum_{l\in L_n}{\rm p.v.}\int_0^{\overline{\mathcal E}_l- \underline{\mathcal E}_l}\frac{G_l(\varrho_l(s))^*G_l(\varrho_l(s))}{s-\overline{\mathcal E}_l+E}\,\varrho_l'(s)\,\rd s}\\[1.5em]
&+i\pi\displaystyle{\sum_{l\in L_n}G_l(\varrho_l(\overline{\mathcal E}_l- E))^*G_l(\varrho_l(\overline{\mathcal E}_l- E))\varrho'_l(\overline{\mathcal E}_l- E),}
\ea
\ee
which combined with \eqref{16mar17} and \eqref{16mar17a} give us the existence of $\lim_{\delta\downarrow 0}T(E+i\delta)$.
Therefore, from the previous decomposition we get that
 $$\forall E\in (\gamma_{n}^{-},\overline{\mathcal E}_{n}), \quad {\rm Im} \,T(E+i0)=\pi\displaystyle{\sum_{l\in L_n}G_l(\varrho_l(\overline{\mathcal E}_l-E))^*G_l(\varrho_l(\overline{\mathcal E}_l-E))\varrho'_l(\overline{\mathcal E}_l-E),}$$
thus
 \bel{rankIm}
\forall E\in (\gamma_{n}^{-},\overline{\mathcal E}_{n}), \quad  {\rm Rank\,(Im} \,T(E+i0))=\#L_n.
  \ee
Furthermore, the continuity of $G_l^*G_l$ and of $\varrho_l'$ for any $l\in \N$,  imply the trace norm continuity of ${\rm Im}(T(\cdot+i0))$. \Bk We have  the results concerning ${\rm Im}(T(\cdot+i0)) $ altogether.  

Standard properties of the principal value show that the real part of the r.h.s. of \eqref{Decomposeinband} is continuous (in the operator norm) with respect to $E\in (\gamma_{n}^{-},\overline{\mathcal E}_{n})$. In consequence, ${\rm Re}\,T(\cdot+i0)$ is continuous in $ (\gamma_n^-,\overline{{\mathcal E}}_n)$.
 \end{proof}
To continue we need the following  lemma:
\begin{lemma}\label{estiPV}
For $\lambda\in (0,\overline{\cE_{n}}-\underline{\cE_{n}})$ and $\epsilon_{\lambda}\in (0,\lambda)$ we define : 
$$P(\lambda):={\rm p.v.}\displaystyle{\int_{\lambda-\epsilon_\lambda}^{\lambda+\epsilon_\lambda}
\frac{G_n(\varrho_n(s))^*G_n(\varrho_n(s))}{{\Bk s -\lambda \Bk}}\varrho_n'(s) \rd s}.$$
Then, there exists $C_{0}>0$ independent of $\lambda$ such that 
\bel{nov11_3}
\forall \lambda\in  (0,\overline{\cE_{n}}-\underline{\cE_{n}}), \quad \| P(\lambda)\|_{1}
\leq C_{0}\frac{\epsilon_\lambda}{\lambda^{2/M+1}}.\ee
\end{lemma}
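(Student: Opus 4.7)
The plan is to convert the principal value integral into a regular one by subtracting a constant, then control the resulting expression by a mean value estimate on the integrand. Since the interval $(\lambda-\epsilon_\lambda, \lambda+\epsilon_\lambda)$ is symmetric about $\lambda$, one has $\mathrm{p.v.}\int ds/(s-\lambda)=0$; thus, setting $F(s):=G_n(\varrho_n(s))^*G_n(\varrho_n(s))\,\varrho_n'(s)$ and subtracting the constant operator $F(\lambda)$ from the numerator yields the regular integral
\[
P(\lambda)=\int_{\lambda-\epsilon_\lambda}^{\lambda+\epsilon_\lambda}\frac{F(s)-F(\lambda)}{s-\lambda}\,ds,
\]
so that $\|P(\lambda)\|_1\leq 2\epsilon_\lambda\sup_s\|F'(s)\|_1$ by the operator-valued mean value theorem, with the supremum taken over the interval.

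The next step is to bound $\|F'(s)\|_1$. The product rule gives
\[
F'(s)=(G_n^*G_n)'(\varrho_n(s))(\varrho_n'(s))^2+G_n^*G_n(\varrho_n(s))\,\varrho_n''(s).
\]
The Lipschitz estimate from Lemma \ref{boch_lipt}, valid at infinity, transfers to the trace norm via the rank-two structure of the difference, while the proof of Lemma \ref{boch_lipt} already contains a uniform trace-norm bound on $G_n^*G_n(k)$ itself. For the factors involving $\varrho_n$, the asymptotic \eqref{E:asymptvarrho} gives $|\varrho_n'(\lambda)|\sim\lambda^{-1/M-1}$, and $|\varrho_n''(\lambda)|\sim\lambda^{-1/M-2}$ follows by formally differentiating this expansion (equivalently, by inverting the asymptotic of $E_n''$ from Corollary \ref{C:asymptBexplicit}).

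The main technical point is to match the sharp exponent $\lambda^{-2/M-1}$. The naive contribution of the first summand is of order $(\varrho_n'(\lambda))^2\sim\lambda^{-2/M-2}$, which alone would yield an estimate one power of $\lambda$ weaker than stated. To close this gap, one exploits the rank-one structure $G_n^*G_n(k)=|\psi_k\rangle\langle\psi_k|$ with $\psi_k(x,y):=(2\pi)^{-1/2}e^{iky}V^{1/2}(x,y)u_n(x,k)$: the concentration of $u_n(\cdot,k)$ near $x_k\sim k/b_+$ with Gaussian-type decay \eqref{3mar17c}, coming from the quasi-mode construction of Section \ref{sec_asymp_band}, together with the decay \eqref{cd2} of $V$, yields additional decay for $\|(G_n^*G_n)'(k)\|_1$ as $k\to+\infty$. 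Inserting this refined estimate into the product-rule expression closes the gap and produces the desired trace-norm bound, uniform in $\lambda\in(0,\overline{\cE}_n-\underline{\cE}_n)$.
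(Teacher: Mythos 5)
Your reduction of the principal value to $\int_{\lambda-\epsilon_\lambda}^{\lambda+\epsilon_\lambda}\frac{F(s)-F(\lambda)}{s-\lambda}\,ds$, followed by a mean-value bound, is essentially the same argument as the paper's: the paper splits $P(\lambda)=\mathcal{M}_1(\lambda)+\mathcal{M}_2(\lambda)$ by pairing $\lambda+s$ with $\lambda-s$, which is exactly what expanding the product rule for $F'$ produces (one piece controlled by the Lipschitz continuity of $G_n^*G_n$, the other by the variation of $\varrho_n'$). So the overall strategy is aligned.

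Where your proposal departs is the step you yourself flag as the ``main technical point''. You correctly identify that the Lipschitz bound $\|(G_n^*G_n)'\|\leq C$ of Lemma~\ref{boch_lipt} plus $\varrho_n'(s)\sim s^{-1-1/M}$ gives a contribution of order $(\varrho_n'(\lambda))^2\sim\lambda^{-2/M-2}$, i.e.\ one power of $\lambda$ worse than the exponent in \eqref{nov11_3}. This is a real concern, and in fact the same issue is visible in the paper's own estimate of $\mathcal{M}_1$: it bounds $\|\mathcal{M}_1(\lambda)\|_1$ by $C\int_0^{\epsilon_\lambda}(\lambda-s)^{-2-2/M}\,ds$, a quantity that for $\epsilon_\lambda\lesssim\lambda$ is of size $\epsilon_\lambda\lambda^{-2-2/M}$ and not $\epsilon_\lambda\lambda^{-1-2/M}$; only the $\mathcal{M}_2$ piece naturally lands on the stated exponent. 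The paper does \emph{not} invoke any extra decay of $(G_n^*G_n)'$ to rescue this step — it simply writes down the final bound.

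Your proposed repair — extract extra decay of $\|(G_n^*G_n)'(k)\|_1$ from the Gaussian concentration \eqref{3mar17c} of $u_n(\cdot,k)$ near $x_k$ and the decay \eqref{cd2} of $V$ — is a genuinely different idea from what the paper records, but as stated it is not a proof. To absorb one extra power of $\lambda$ via $\varrho_n(s)\sim s^{-1/M}$ you would need $\|(G_n^*G_n)'(k)\|_1=O(k^{-M})$. A direct rank-two computation with $\psi_k(x,y)=(2\pi)^{-1/2}e^{-iky}V^{1/2}(x,y)u_n(x,k)$ gives at best $\|(G_n^*G_n)'(k)\|_1=O(k^{-(m-2)})$, so you would need $m\geq M+2$. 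That relation between $m$ and $M$ is precisely the hypothesis of Theorem~\ref{thVcompact}, but it is \emph{not} an assumption of Lemma~\ref{estiPV}, which requires only \eqref{cd2} with $m>2$ and is also used in the proof of Theorem~\ref{thssf} where no such relation holds. So the gap you noticed cannot be closed this way in the generality of the lemma. You should either (i) quantify the decay and state the additional hypothesis it imposes, or (ii) retain the honest exponent $\epsilon_\lambda\lambda^{-2/M-2}$ that your Lipschitz bound actually yields and trace through how it affects the downstream applications, rather than asserting that ``inserting this refined estimate closes the gap.''
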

\begin{proof}
Write
$$ P(\lambda)=\Bk{\rm p.v.}{\int_{\lambda-\epsilon_\lambda}^{\lambda+\epsilon_\lambda}\frac{G_n(\varrho_n(s))^*G_n(\varrho_n(s))}{{s -\lambda}}\varrho_n'(s)\,\rd s}
={\rm p.v.}\left(\mathcal{M}_1(\lambda)+\mathcal{M}_2(\lambda)\right).
$$
with 
\begin{equation*}
\left\{
\begin{aligned}
&\mathcal{M}_1(\lambda)=\displaystyle{\int_{0}^{\epsilon_\lambda}
\left({G_n(\varrho_n(\lambda+s))^*G_n(\varrho_n(\lambda+s))-G_n(\varrho_n(\lambda-s))^*G_n(\varrho_n(\lambda-s))}\right)\varrho'_j(\lambda+s) \frac{\rd s}{s}}\\
\\
&\mathcal{M}_2(\lambda)=\displaystyle{\int_{0}^{\epsilon_\lambda}
{G_n(\varrho_n(\lambda-s))^*G_n(\varrho_n(\lambda-s))\left(\varrho_n'(\lambda+s)-\varrho_n'(\lambda-s)\right)}\frac{\rd s}{s}.}\end{aligned}
\right.
\end{equation*}
Since both $\mathcal{M}_1(\lambda)$ and $\mathcal{M}_2(\lambda)$ are convergent integrals, there holds 
$${\rm p.v.}{\int_{\lambda-\epsilon_\lambda}^{\lambda+\epsilon_\lambda}\frac{G_n(\varrho_n(s))^*G_n(\varrho_n(s))}{{s -\lambda}}\varrho_n'(s)\,\rd s}
=\mathcal{M}_1(\lambda)+\mathcal{M}_2(\lambda).$$
From Lemma \ref{boch_lipt}, we have 
$$\|\mathcal{M}_1(\lambda)\|_{1}
\leq C \displaystyle{\int_{0}^{\epsilon_\lambda}\left|\frac{\varrho_n(\lambda+s)-\varrho_n(\lambda-s)}{s}\right|\left|\varrho'_n(\lambda+s)\right| \rd s}
$$
For each $\lambda>0$ and $\epsilon_\lambda\in(0,\lambda)$ fixed, we have that for  $s\in(0,\epsilon_\lambda), $ 
$\varrho_n(\lambda+s)-\varrho_n(\lambda-s)=\varrho'(c_{\lambda,s})s$
where $c_{\lambda,s}\in(\lambda-s,\lambda+s)\subset(\lambda-\epsilon_\lambda,\lambda+\epsilon_\lambda)\subsetneq(0,2\lambda).$
From \eqref{E:asymptvarrho}, we get
$|\varrho'_n(\lambda+s)|\leq C (\lambda+s)^{-1-1/M}< C (\lambda-s)^{-1-1/M}$
and $|\varrho'_n(c_{\lambda,s})|\leq C (c_{\lambda,s})^{-1-1/M} < C (\lambda-s)^{-1-1/M}$.
Then, we deduce 
$$
\ba{lll}
\|\mathcal{M}_1(\lambda)\|_{1}
& \leq&  C \displaystyle{\int_{0}^{\epsilon_\lambda}(\lambda-s)^{-2-2/M} \rd s}\\[1em]
&\leq&
\displaystyle{\tilde{C}\frac{\epsilon_\lambda}{\lambda^{2/M+1}}}.
\ea
$$
%
%
%
%
 For $\mathcal{M}_2$ we compute the asymptotics of $\varrho_{n}''(s)$ in 0 as in \eqref{E:asymptvarrho} and we get 
$$
\ba{lll}
\|\mathcal{M}_2(\lambda)\|_{1}
&\leq &C \displaystyle{\int_{0}^{\epsilon_\lambda}   \left|\frac{\varrho_n'(\lambda+s)-\varrho_n'(\lambda-s)}{s}\right|\rd s}\\[1em]

&\leq& \displaystyle{\tilde{C}\frac{\epsilon_\lambda}{\lambda^{2/M+1}}}.
\ea
$$\end{proof}
For $A\in(0,\overline{{\mathcal E}}_n-\underline{{\mathcal E}}_n]$ and $\lambda\in (-\infty,A)\setminus\{0\}$,  set 
\bel{dec12b}T_{n,A}(\lambda):={\rm p.v.}\int_0^A\frac{G_n(\varrho_n(s))^*G_n(\varrho_n(s))}{ s-\lambda}\varrho_n'(s)\rd s.
\ee
 Note that for $\lambda<0$, it is not necessary to define this operator as a principal value integral.

In the same manner as  $\gamma_{n}^{-}$ is  the nearest thresholds below   $\overline{\cE_{n}}$, define  $\gamma_{n}^{+}$ as
nearest thresholds above  $\overline{\cE_{n}}$. 
\begin{proposition}\label{pr2}
Assume the hypotheses of Theorem \ref{thssf}. Let $n\in \N$,  and    $A_-\in (0,\overline{\cE_{n}}-\gamma_{n}^{-})$,  $A_+\in (0,\gamma_{n}^{+}-\overline{\cE_{n}})$.
 For $\delta\in(0,1)$, take $I^{\pm}_\delta\subset (0,\delta A_\pm)$.  
Then, for  $\lambda\in I_\delta^\pm$, we can write
\bel{dec21}{\rm Re}\,T(\overline{\mathcal E}_n\pm\lambda+i0)=T_{n,A_\pm}(\mp\lambda)+\tilde{T}_n^\pm(\lambda),\ee
where the operators $T_{n,A_\pm}$, $\tilde{T}^\pm_n$ satisfy that there exists $C_0>0$ such that 
\bel{dec21a}\ba{lll}
||T_{n,A_+}(-\lambda)||_1&\leq C_0 |\lambda|^{-1}, & \forall\, \lambda\in I_\delta^+,\\
||T_{n,A_-}(\lambda)||_1&\leq C_0 (|\lambda|^{-1}+|\lambda|^{-2/M}), & \forall\, \lambda\in I_\delta^-,\ea
\ee 
\bel{20mar2017a}
\| \tilde{T}^\pm_n(\lambda)\|_{2}\leq C_0, \quad \forall\, \lambda\in I_\delta^\pm.\ee
\end{proposition}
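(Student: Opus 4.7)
The plan is to isolate the $n$-th band, the only source of singularity at $\overline{\cE}_n$, and to handle its contribution quantitatively while absorbing every other piece into the Hilbert--Schmidt remainder $\tilde{T}_n^\pm(\lambda)$. Starting from \eqref{16mar17a} and performing in each term the change of variable $s = \overline{\cE}_l - E_l(k)$ as in the proof of Proposition \ref{pr1}, I would write
\begin{equation*}
T(\overline{\cE}_n \pm \lambda + i0) = T_n(\overline{\cE}_n \pm \lambda + i0) + \sum_{l \neq n} T_l(\overline{\cE}_n \pm \lambda + i0),
\end{equation*}
where $T_l(E+i0)$ denotes the band-$l$ contribution identified in \eqref{Decomposeinband}.

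For each $l \neq n$, the isolation condition \eqref{IsolatedT} forces $\overline{\cE}_n$ to be either an interior point of $E_l(\R)$ (in which case the pole $s = \overline{\cE}_l - \overline{\cE}_n$ sits in a compact subinterval of $(0, \overline{\cE}_l - \underline{\cE}_l)$ where $\varrho_l'$ is smooth and $\|G_l^*G_l(\varrho_l(\cdot))\|_2$ is bounded, and the argument of Proposition \ref{pr1} shows that the real part of $T_l(\overline{\cE}_n \pm \lambda + i0)$ is HS-continuous at $\lambda = 0$) or outside $\overline{E_l(\R)}$ (in which case the denominator is uniformly bounded below). Since only finitely many bands meet a fixed compact neighborhood of $\overline{\cE}_n$, the full off-diagonal sum is $O(1)$ in $\|\cdot\|_2$ and will be absorbed into $\tilde{T}_n^\pm$.

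For the diagonal term, I split the integral defining $T_n(\overline{\cE}_n \pm \lambda + i0)$ at $s = A_\pm$. The inner piece $\int_0^{A_\pm}$ is, up to the sign produced by the change of variable, exactly $T_{n, A_\pm}(\mp\lambda)$. On the tail $\int_{A_\pm}^{\overline{\cE}_n - \underline{\cE}_n}$ the denominator satisfies $|s - (\mp\lambda)| \geq (1-\delta)A_\pm$ uniformly for $\lambda \in I_\delta^\pm$, so its Hilbert--Schmidt norm is bounded by a constant multiple of $\int_0^{\overline{\cE}_n - \underline{\cE}_n} \|G_n^*G_n(\varrho_n(s))\|_2 |\varrho_n'(s)|\,\rd s$, which is finite by Lemma \ref{boch_lipt}. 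Together with the preceding paragraph this yields the decomposition \eqref{dec21} and the bound \eqref{20mar2017a} for $\tilde{T}_n^\pm(\lambda)$.

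It remains to establish \eqref{dec21a}. Since each $G_n(k)^*G_n(k)$ is a rank-one positive operator, its trace and Hilbert--Schmidt norms coincide. For $T_{n, A_+}(-\lambda)$ the integrand is not singular and the lower bound $s + \lambda \geq \lambda$ combined with Lemma \ref{boch_lipt} gives $\|T_{n, A_+}(-\lambda)\|_1 \leq C_0 \lambda^{-1}$ at once. For $T_{n, A_-}(\lambda)$, I would split the principal value integral into $(\lambda - \epsilon_\lambda, \lambda + \epsilon_\lambda)$ and its complement in $(0, A_-)$. Lemma \ref{estiPV} bounds the inner part by $C \epsilon_\lambda \lambda^{-1 - 2/M}$, and on the complement the estimate $|s - \lambda| \geq \epsilon_\lambda$ together with Lemma \ref{boch_lipt} gives $C/\epsilon_\lambda$. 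The choice $\epsilon_\lambda = \lambda/2$ balances the two and yields the announced bound $C_0(\lambda^{-1} + \lambda^{-2/M})$. The main technical obstacle is precisely the simultaneous control of the principal-value oscillations against the blow-up of $|\varrho_n'(s)| \sim \beta s^{-1-1/M}$ at the threshold, which is handled by Lemma \ref{estiPV}.
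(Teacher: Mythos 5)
Your decomposition and quantitative bounds follow essentially the same lines as the paper: isolate the $n$-th band, split it at $A_\pm$, absorb the tail and all other bands into $\tilde T_n^\pm$, bound $T_{n,A_+}(-\lambda)$ directly by monotonicity, and handle the principal value in $T_{n,A_-}(\lambda)$ by splitting at $(\lambda-\epsilon_\lambda,\lambda+\epsilon_\lambda)$ with $\epsilon_\lambda=\lambda/2$ and applying Lemma~\ref{estiPV}. The one place you depart from the paper is in how you control the contribution of the off-diagonal bands $l\neq n$: you argue band-by-band, invoking \eqref{IsolatedT} plus the observation that only finitely many bands meet a compact neighborhood of $\overline{\cE}_n$. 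That observation disposes of the potentially singular bands, but it does not by itself bound the sum over the infinitely many remaining bands $l\notin L_n$ uniformly in $\|\cdot\|_2$; you would still need to justify summability of the individual Hilbert--Schmidt norms. The paper sidesteps this entirely by collapsing $\sum_{l\notin L_n}$ into $V^{1/2}\bigl((H_0-\overline{\cE}_n+\lambda)P_n\bigr)^{-1}V^{1/2}$, whose Hilbert--Schmidt boundedness is inherited at once from that of $T(z)$ and the uniform boundedness of $H_0\bigl((H_0-\overline{\cE}_n+\lambda)P_n\bigr)^{-1}$; this resolvent-projection trick is the cleanest way to close that step, and you should fold it into your argument.
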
 
\begin{proof} Let us start with ${\rm Re}\,T(\overline{\mathcal E}_n-\lambda+i0)$. From Proposition \ref{pr1} and \eqref{dec12b} we can   write 
  \bel{DecompReT}
 {\rm Re}\,T(\overline{\mathcal E}_n-\lambda+i0)=T_{n,A_{-}}(\lambda)+\tilde{T}^-_n(\lambda),
 \ee
 with
\bel{20mar17}\ba{lll}\tilde{T}^{-}_n(\lambda)&=&\displaystyle{\int_{A_{-}}^{\overline{\mathcal E}_n- \underline{\mathcal E}_n}\frac{G_n(\varrho_n(s))^*G_n(\varrho_n(s))}{s-\lambda}\,\varrho_n'(s)\,\rd s}
\\
\\&+&\displaystyle{\sum_{l\in L_n, l\neq n}{\rm p.v.}\int_0^{\overline{\mathcal E}_l- \underline{\mathcal E}_l}\frac{G_l(\varrho_l(s))^*G_l(\varrho_l(s))}{s-\overline{\mathcal E}_l+\overline{\mathcal E}_n-\lambda}\,\varrho_l'(s)\,\rd s}\\
\\
&+&\displaystyle{V^{1/2}((H_0-\overline{\mathcal E}_n+\lambda)P_n)^{-1}V^{1/2}},\ea\ee
 where $L_{n}$ was defined in \eqref{D:Ln}.

 Now,  to  prove  \eqref{20mar2017a},
if   $\lambda$  is in $I_\delta^-$, 
the $\mathfrak{S}_1$-norm of  the first term of $\tilde{T}^{-}_n(\lambda)$ can be estimated as follows:
$$\ba{ll}
\left\|  \displaystyle{\int_{A_{-}}^{\overline{\mathcal E}_n- \underline{\mathcal E}_n}\frac{G_n(\varrho_n(s))^*G_n(\varrho_n(s))}{s-\lambda}\,\varrho_n'(s)\,\rd s}    \right\|_2&\leq \displaystyle{\sup_{s\in[A_{-},\overline{\mathcal E}_n- \underline{\mathcal E}_n)} \frac{1}{|s-\lambda|}
\int_{-\infty}^{\varrho_n(A_{-})}\| G_n(k)^*G_n(k)\|_2\,\rd k}\\[.6em]
&\leq\displaystyle{((1-\delta)A_{-})^{-1}\int_{-\infty}^{\infty}\| G_n(k)^*G_n(k)\|_2\,\rd k.}\ea$$   
Therefore, using  Lemma \ref{boch_lipt}, we can see that this term is uniformly bounded for $\lambda$ in $I_\delta^-$. 
A similar procedure can be performed to find a uniform bound for the second term of \eqref{20mar17}.

For the third term in \eqref{20mar17} we use 
$$\|V^{1/2}((H_0-\overline{\mathcal E}_n+\lambda)P_n)^{-1}V^{1/2}\|_2\leq \|V^{1/2}H_0^{-1}\|_2\,\|H_0((H_0-\overline{\mathcal E}_n+\lambda)P_n)^{-1}V^{1/2}\|,$$ 
and  the uniform boundedness of $\|H_0((H_0-\overline{\mathcal E}_n+\lambda)P_n)^{-1}\|$ for $\lambda\in I_\delta^-$. 



Finally, we write
\bel{16mar17b}\ba{ll}T_{n,A_{-}}(\lambda)=&{\rm p.v.}\displaystyle{\int_{\lambda/2}^{3\lambda/2}
\frac{G_n(\varrho_n(s))^*G_n(\varrho_n(s))}{{s-\lambda}}\varrho_n'(s)\,\rd s}\\[1em]
&+\displaystyle{\int^{\varrho_n(3\lambda/2)}_{\varrho_n(A_{-})}
\frac{G_n(k)^*G_n(k)}{E_n(k)-\overline{\mathcal{E}}_n+\lambda}\,\rd k}+
\displaystyle{\int_{\varrho_n(\lambda/2)}^{\infty}
\frac{G_n(k)^*G_n(k)}{E_n(k)-\overline{\mathcal{E}}_n+\lambda}\,\rd k.}
\ea\ee
 Using the monotonicity of $E_n(k)-\overline{\mathcal{E}}_n$, we get  $E_n(k)-\overline{\mathcal{E}}_n+\lambda>\frac{\lambda}{2}$ for all $k\in (\varrho_n(\lambda/2),+\infty)$ and $E_n(k)-\overline{\mathcal{E}}_n+\lambda<-\frac{\lambda}{2}$ for all $k\in (\varrho_n(A_{-}),\varrho_n(3\lambda/2))$. 
By the integrability of $||G_n(k)^*G_n(k)||$ we conclude that the $\mathfrak{S}_1$-norms of the   last two terms in \eqref{16mar17b} are controlled by $\lambda^{-1}$. The first term  in \eqref{16mar17b} can be  bounded applying Lemma \ref{estiPV} with $\epsilon_{\lambda}=\frac{\lambda}{2}$, obtaining that it is controlled by $\lambda^{-2/M}$. This  concludes the proof  of \eqref{dec21a}.
 The case ${\rm Re}\,T(\overline{\mathcal E}_n+\lambda+i0)$ is simpler: we prove in the same way that $\tilde{T}^{+}_{n}$ satifies \eqref{20mar2017a} for all $\lambda\in I_{\delta}^{+}$, and  we can read directly on \eqref{dec12b} that $T_{n,A_+}(-\lambda)$ is a well defined integral for $\lambda>0$, satsifying \eqref{dec21a}. 
  \end{proof}

\Bk

\section{Proof of the behavior of the spectral shift function}\label{proofasym_ssf}

In this section, we prove Theorems \ref{thssf}, \ref{thVcompact}  and \ref{thssf2}.
\subsection{Preliminary}
\subsubsection{Some results on the counting function of compact operators}
For $T$ being a compact self-adjoint operator set
 $$
n_{\pm}(r; T) : = {\rm Rank}\,\dP_{(r,\infty)}(\pm T);
$$
thus the functions $n_{\pm}(\cdot; T)$ are respectively the
counting functions of the positive and negative eigenvalues of the
operator $T$. If $T$ is compact but not necessarily
self-adjoint we will use the notation
$$
n_*(r; T) : = n_+(r^2; T^* T), \quad r> 0;
$$
thus  $n_{*}(\cdot; T)$ is the counting function of the singular
values of $T$. Evidently,
\bel{oct6_2}
n_*(r; T) = n_*(r; T^*), \quad n_+(r; T^*T) = n_+(r; T T^*), \quad r>0.
\ee
Besides, \Bk for $r_1, r_2>0$,  we have  the  Weyl inequalities
    \bel{weyl1}
    n_\pm(r_1 + r_2; T_1 + T_2) \leq n_\pm(r_1; T_1) + n_\pm(r_2; T_2),
    \ee
where  $T_j$, $j=1,2$, are
linear self-adjoint compact operators (see e.g. \cite[Theorem 9.2.9]{birsol}), as well as the Ky Fan inequality
    \bel{kyfan1}
    n_*(r_1 + r_2; T_1 + T_2) \leq n_*(r_1; T_1) + n_*(r_2;
T_2),
    \ee
    for compact but not necessarily self-adjoint $T_j$, $j=1,2$, (see e.g. \cite[Subsection 11.1.3]{birsol}).

Further, since for $T\in\mathfrak{S}_p,$ $ \| T \|_p  = \left( -\int_0^{\infty} r^p \,dn_*(r; T) \right)^{1/p} 
    $,
  the  Chebyshev-type estimate 
    \bel{cheby}
    n_*(r; T) \leq r^{-p} \|T\|_p^p
    \ee
    holds true for any $r > 0$ and $p \in [1, \infty)$.
\subsubsection{A useful representation of the SSF}
Let us recall the \emph{Pushnitski's representation of the SSF}.
 For $z \in \C^+$ and $V\geq 0$, we  have defined \Bk $T(z)= V^{1/2} (H_0-z)^{-1} V^{1/2}.$ As shown in Proposition \ref{pr1},  the norm limits
$\lim_{\delta\downarrow 0}T(E+i\delta) = T(E+i0)$ exist for every $E \in \R\setminus \cT_{H_0}$, provided that $V$ satisfies \eqref{cd2}. Then, by   \cite[Theorem 1.2]{pu}: For  almost all $E \in \R$, the SSF  $\xi(E;H_\pm,H_0)$ admits the representation  
    \bel{pssf}
    \xi(E;H_\pm,H_0) = \pm \frac{1}{\pi}\int_\R n_{\mp}(1;{\rm Re} \,T(E+i 0)+t\, {\rm Im} \, T(E +i0))\frac{dt}{1+t^2}.
    \ee

\subsection{Boundedness  and continuity of the SSF}

  Thanks to   representation \eqref{pssf} and \eqref{kyfan1}, for any $\epsilon \in (0,1)$
   \bel{22mar17}|\xi(\overline{\mathcal{E}}_n-\lambda;H_\pm,H_0)|\leq  n_*(1-\epsilon;{\rm Re}\,T(\overline{\mathcal{E}}_n-\lambda))+\frac{1}{\pi}\int_\R n_*(\varepsilon;t{\rm Im}\,T(\overline{\mathcal{E}}_n-\lambda))\frac{dt}{1+t^2}.\ee
Due to Proposition \ref{pr1}, the second term is uniformly bounded for  $\lambda$  in $(0,\overline{\cE}_{n}-\gamma_{n}^{-})$. 
Let $I_\delta^-$ be an  interval as in Proposition \ref{pr2}.  Using \eqref{dec21} and the  Ky Fan inequality \eqref{kyfan1} we get:
\bel{E:decoupetout}
\forall \epsilon \in (0,1), \quad n_*(1-\epsilon;{\rm Re}\,T(\overline{\mathcal{E}}_n-\lambda))\leq
\displaystyle{n_*\left(\frac{1-\epsilon}{2};T_{n,A}(\lambda)\right)+n_*\left(\frac{1-\epsilon}{2};\tilde{T}^-_{n}(\lambda)\right).}
\ee
Combining \eqref{cheby}, \eqref{20mar2017a} and \eqref{E:decoupetout} we can see that  for all $\epsilon \in (0,1)$ and all $\lambda\in I_\delta^-$
$$n_*(1-\epsilon;{\rm Re}\,T(\overline{\mathcal{E}}_n-\lambda))
\leq\displaystyle{\frac{4}{(1-\epsilon)^2}\|T_{n,A}(\lambda)\|_1+C_0.}
$$
By Proposition \ref{pr2}, if $K$ is any compact set contained in $I_\delta^-$, $\|T_{n,A}(\lambda)\|_1$ is uniformly bounded for $\lambda $ in $K$. Moreover, using \eqref{dec21a}, we get
$$n_*(1-\epsilon;{\rm Re}\,T(\overline{\mathcal{E}}_n-\lambda))
\leq C_0(1+\lambda^{-1}+\lambda^{-2/M}), \quad \forall \lambda\in I_\delta^-,$$
 implying  the first part of \eqref{SSFcontrolthresh}. The second part is similar.
 
Likewise, from \eqref{pssf} 
 \bel{E:decoupetout}
\forall \epsilon \in (0,1), \quad |\xi(\overline{\mathcal{E}}_n+\lambda;H_{-},H_0)|
\leq
\displaystyle{n_{+}\left(\frac{1-\epsilon}{2};T_{n,A}(-\lambda)\right)+O(1),}
\ee
and 
$$T_{n,A}(-\lambda)=-\int_{\varrho_n(A)}^\infty\frac{G_n(k)^*G_n(k)}{\overline{\mathcal{E}}_n-E_n(k)-\lambda},$$ which is clearly non-positive \Bk self adjoint operator, see \eqref{dec12b}. Therefore we have $n_{+}(\frac{1-\epsilon}{2},T_{n,A}(-\lambda))=0$, and we get \eqref{SSFboundedthresh}.



The continuity of the SSF follows from \cite[Corollary 2.6]{pu} together with Proposition \ref{pr1},  and is  similar to the proof of \cite[Proposition 2.5]{bpr}.

\subsection{Precise behavior at thresholds}

For  {$\lambda \in \R$} and $I \subset (0,\overline{{\mathcal E}}_n-\underline{{\mathcal E}}_n) \setminus \{\lambda\} $  let us introduce ${S}_{V}[ I]:L^2({\varrho_n(I)})\to L^2(\R^2)$ as the integral operator with kernel
$$
(2\pi)^{-1/2}{V}(x,y) ^{1/2} e^{iky}u_n(x;k)|E_n(k)-\overline{{\mathcal E}}_n+\lambda|^{-1/2},
$$
 that is: 
$$({S}_{V}[ I]f)(x,y)=(2\pi)^{-1/2}\int_{k\in I}{V}(x,y) ^{1/2} e^{iky}u_n(x;k)|E_n(k)-\overline{{\mathcal E}}_n+\lambda|^{-1/2}f(k) \rd k.$$

The next lemma link the behavior of the SSF at the threshold with $S_{V}[I]$.
\begin{lemma}
\label{L:isolatesing} Let $\lambda\in (0,\overline{\cE_{n}}-\underline{\cE_{n}})$,  $\epsilon_{\lambda}\in (0,\lambda)$ and    $A_-\in (0,\overline{\cE_{n}}-\gamma_{n}^{-})$. Then, for all $\delta\in (0,1)$:
\bel{avoir}\ba{ll}&
n_+ (1+\delta , S_V S_V ^* [(0, \lambda- \epsilon_\lambda ) ] )
{-  n_+} (\delta/2 , S_VS_V^*  [(\lambda+ \epsilon_\lambda,A)])+O\left(\frac{\epsilon_\lambda}{\lambda^{2/M+1}}\right) \\\leq&
-\xi(\overline{\mathcal E}_n -\lambda;H_-,H_0) \\\leq &
n_+ (1-\delta , S_V S_V ^* [(0, \lambda- \epsilon_\lambda ) ] )+O\left(\frac{\epsilon_\lambda}{\lambda^{2/M+1}}\right),\ea
\ee
 \bel{avoir2}\ba{ll}
 &n_+ (1+\delta, S_V S_V ^* [( \lambda+ \epsilon_\lambda,A)] )-n_+ (\delta/2 , S_V S_V ^* [(0, \lambda- \epsilon_\lambda ) ] )+O\left(\frac{\epsilon_\lambda}{\lambda^{2/M+1}}\right) \\ 
\leq &
\xi(\overline{\mathcal E}_n -\lambda;H_+,H_0) \\\leq &
{  n_+} (1-\delta , S_VS_V^*  [(\lambda+ \epsilon_\lambda,A)])
+O\left(\frac{\epsilon_\lambda}{\lambda^{2/M+1}}\right),\ea
\ee
and 
 \bel{avoir3}\ba{ll}&
n_+ (1+\delta , S_V S_V ^* [(0,A ) ] )
 \\\leq&
\xi(\overline{\mathcal E}_n +\lambda;H_+,H_0) \\\leq &
n_+ (1-\delta , S_V S_V ^* [(0, A ) ] ),\ea
\ee
where the remainders are uniform with respect to $\lambda$ and $\epsilon_{\lambda}$.
\end{lemma}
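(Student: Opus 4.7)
The approach I would take is to combine Pushnitski's representation \eqref{pssf} with the decomposition of $\mathrm{Re}\,T$ from Proposition \ref{pr2}, refining the singular integral $T_{n,A}(\lambda)$ into pieces that can be identified directly with $S_V S_V^*[I]$. Concretely, I would first split the principal value defining $T_{n,A}(\lambda)$ into three pieces: an integral over $(0,\lambda-\epsilon_\lambda)$, the middle part $P(\lambda)$ from Lemma \ref{estiPV}, and an integral over $(\lambda+\epsilon_\lambda,A)$. By the change of variable $k=\varrho_n(s)$ — with a sign coming from $\varrho_n'<0$, cf. \eqref{E:asymptvarrho} — one checks that the outer pieces equal $+S_V S_V^*[(0,\lambda-\epsilon_\lambda)]$ and $-S_V S_V^*[(\lambda+\epsilon_\lambda,A)]$, respectively. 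Together with Proposition \ref{pr2}, this yields
\begin{equation*}
\mathrm{Re}\,T(\overline{\mathcal E}_n-\lambda+i0)=S_V S_V^*[(0,\lambda-\epsilon_\lambda)]-S_V S_V^*[(\lambda+\epsilon_\lambda,A)]+R(\lambda),
\end{equation*}
where $R(\lambda):=P(\lambda)+\tilde T_n^-(\lambda)$ satisfies $\|P(\lambda)\|_1=O(\epsilon_\lambda/\lambda^{2/M+1})$ by Lemma \ref{estiPV} and $\|\tilde T_n^-(\lambda)\|_2=O(1)$ by Proposition \ref{pr2}.

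I would next apply Pushnitski's formula \eqref{pssf}. Since $\mathrm{Im}\,T(E+i0)$ has uniformly bounded finite rank near $\overline{\mathcal E}_n$ (Proposition \ref{pr1}), Weyl's inequality \eqref{weyl1} together with $\int_\R dt/[\pi(1+t^2)]=1$ lets me remove the $t\,\mathrm{Im}\,T$ contribution inside $n_+$ at the cost of an additive $O(1)$, uniformly in $t$, giving
\begin{equation*}
n_+(1+\delta';\mathrm{Re}\,T)-O(1)\leq -\xi(\overline{\mathcal E}_n-\lambda;H_-,H_0)\leq n_+(1-\delta';\mathrm{Re}\,T)+O(1).
\end{equation*}
A second application of Weyl detaches $R(\lambda)$ from the $S_V S_V^*$-difference, and Chebyshev's inequality \eqref{cheby} — with $p=1$ on $P(\lambda)$ and $p=2$ on $\tilde T_n^-$ — absorbs the resulting counting function into $O(\epsilon_\lambda/\lambda^{2/M+1})$. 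For the upper bound in \eqref{avoir}, the elementary inequality $n_+(r;A-B)\leq n_+(r;A)$ for $B\geq0$ eliminates the negative operator $-S_V S_V^*[(\lambda+\epsilon_\lambda,A)]$; for the lower bound, the dual inequality $n_+(r;A-B)\geq n_+(r+r';A)-n_+(r';B)$ produces the extra term $-n_+(\delta/2;S_V S_V^*[(\lambda+\epsilon_\lambda,A)])$.

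The derivation of \eqref{avoir2} is strictly analogous, the only change being that Pushnitski's formula for $\xi(\cdot;H_+,H_0)$ involves $n_-$, and the identity $n_-(r;X)=n_+(r;-X)$ interchanges the roles of the two counting functions. For \eqref{avoir3} the argument simplifies considerably, since for $\lambda>0$ the operator $T_{n,A_+}(-\lambda)$ is an ordinary (non principal-value) integral equal to $-S_V S_V^*[(0,A)]$ by the same change-of-variable computation, so no $\epsilon_\lambda$-dependent correction appears and the Weyl/Chebyshev routine yields the bounds directly. The main obstacle I anticipate is combinatorial: each Weyl-type inequality shifts the threshold parameter and introduces a new counting-function error, so the sequence of intermediate thresholds must be carefully chosen to guarantee that every error term is absorbed into $O(1)$ or $O(\epsilon_\lambda/\lambda^{2/M+1})$ while preserving the leading expression $n_+(1\pm\delta;S_V S_V^*[\cdot])$.
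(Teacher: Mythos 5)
Your argument is essentially the paper's: both proofs start from Pushnitski's representation \eqref{pssf}, invoke Proposition \ref{pr2} to reduce $\operatorname{Re}T(\overline{\mathcal E}_n\pm\lambda+i0)$ to $T_{n,A_\pm}$ modulo a bounded Hilbert--Schmidt term, identify (via the change of variable $k=\varrho_n(s)$ and the sign of $\varrho_n'$) the parts of $T_{n,A}$ away from the singularity with $\pm S_VS_V^*$ over the two outer intervals, control the middle principal-value piece $P(\lambda)$ by Lemma \ref{estiPV} and Chebyshev, and then shuffle the threshold parameters with Weyl/Ky Fan, keeping the appropriate $S_VS_V^*$ term and dropping the one of the ``wrong'' sign (using $n_+(r;A-B)\le n_+(r;A)$ for $B\ge0$ and its Weyl-type converse) --- exactly as the paper does, with \eqref{avoir3} simplified because $T_{n,A_+}(-\lambda)=-S_VS_V^*[(0,A)]$ is a genuine integral. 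One cosmetic caveat shared with the paper's own statement: the contributions of $\tilde T_n^\pm$ and of the $t\,\operatorname{Im}T$ integral each give an additive $O(1)$ that is not literally absorbed by $O(\epsilon_\lambda/\lambda^{2/M+1})$ when $\epsilon_\lambda/\lambda^{2/M+1}\to0$, so the remainder should be read as $O(1)+O(\epsilon_\lambda/\lambda^{2/M+1})$; this does not affect how the lemma is used later.
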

\begin{proof}
The proof of \eqref{avoir}, \eqref{avoir2} and \eqref{avoir3} follows the same lines. Let us prove \eqref{avoir}. \Bk  Proposition \ref{pr2}  together with the representation \eqref{pssf} and  the inequality \eqref{weyl1}  implies that for all $r_{1}<1<r_{2}$, as $\lambda \downarrow 0$:
\bel{dec12a}\ba{ll}
n_+(r_{2};T_{n,A}(\lambda))+O(1)
\leq -\xi(\overline{\mathcal E}_n -\lambda;H_-,H_0) \leq n_+(r_{1};T_{n,A}(\lambda))+O(1),
\ea\ee

Note the decomposition
$$
T_{n,A}(\lambda)=\mbox{p.v.}\displaystyle{\int_{\lambda -\epsilon_\lambda }^{\lambda+ \epsilon_\lambda}
\frac{G_{n}(\varrho_n(s))^*G_{n}(\varrho_n(s))}{{\lambda-s}}\varrho_n'(s)\rd s}- S_VS_V^*  [(\lambda+ \epsilon_\lambda,A)] + {S}_{V} {S}_{V} ^* [(0, \lambda-\epsilon_\lambda ) ].
$$ 
Then, the  Weyl inequalities \eqref{weyl1} together  with   Lemma \ref{estiPV}  imply that  for any $\eta \in (0,1)$: 
$$
n_+( r_{1}, T_{n,A}(\lambda) ) \leq n_+ (r_{1}(1-\eta) , S_V S_V ^* [(0, \lambda- \epsilon_\lambda ) ] )+O\left(\frac{\epsilon_\lambda}{\lambda^{2/M+1}}\right).
$$
Now, for $\delta\in (0,1)$ being given, we settle the parameters so that $1-\delta=r_{1}(1-\eta)$ and we get the upper bound by using \eqref{dec12a}.

In the same way, we have for any $\eta \in (0,1)$ and $a\in (0,1)$:
$$
n_+( r_{2}, T_{n,A}(\lambda) )\geq
n_+ (r_{2}(1+\eta) , S_V S_V ^* [(0, \lambda- \epsilon_\lambda ) ] ){-  n_+} (r_{2}a\eta , S_VS_V^*  [(\lambda+ \epsilon_\lambda,A)])
+O\left(\frac{\epsilon_\lambda}{\lambda^{2/M+1}}\right),
$$
and we get the lower bound by chosing the parameters so that $1+\delta=r_{2}(1+\eta)$ and $\frac{\delta}{2}=r_{2}a\eta$ and using \eqref{dec12a}.
\end{proof}
\Bk

\subsubsection{Proof of Theorem \ref{thVcompact}}

From \eqref{cheby}, \eqref{3mar17c} and \eqref{cd2}
\bel{3mar17b}\ba{ll}&
n_+(r,S_{V}S_{V}^*[\lambda, (0, \lambda- \epsilon_\lambda) ])=
n_+(r,S_V^* S_V  [(0, \lambda- \epsilon_\lambda ) ])\leq\frac{1}{r^2}\| S_V  [(0, \lambda- \epsilon_\lambda ) ]\|_2^2\\[.5em]
&=\displaystyle{\frac{1}{2\pi r^2}\int_{\varrho_{ n }(0,\lambda-\epsilon_\lambda)} \int_{\R^2}|E_n(k)-\overline{\mathcal{E}}_n+\lambda|^{-1} u_n(x;k)^2V(x,y)\rd x\rd y\rd k}\\[.5em]
&\displaystyle{\leq C \int_{(\varrho_n(\lambda-\epsilon_\lambda)}^\infty \int_{\R^2}|E_n(k)-\overline{\mathcal{E}}_n+\lambda|^{ -1 } e^{-\tau(x-x_{k})^2}\langle x,y\rangle^{-m}\,\rd x\rd y\rd k} \\[.5em]
\ea\ee
Making the change of variables $k=\varrho_n(s)$ and using \eqref{E:asymptvarrho} we can control the last integral as follows:
$$
\ba{ll} 
=  & \displaystyle{\int_0^{\lambda-\epsilon_\lambda} \int_{\R^2}|s-\lambda|^{-1}|\varrho'_n(s)|\langle x,y\rangle^{-m}e^{-\tau(x-a^{-1}(\varrho_n(s)))^2}\,\rd x\rd y\rd s}\\[.5em]
= C  & \displaystyle{\int_0^{\lambda-\epsilon_\lambda} |s-\lambda|^{-1}|\varrho'_n(s)| \int_{\R}\langle x\rangle^{1-m}e^{-\tau(x-a^{-1}(\varrho_n(s)))^2}\,\rd x\rd s}\\[.5em]
\leq  C  & \displaystyle{\int_0^{\lambda-\epsilon_\lambda} |s-\lambda|^{-1}|\varrho'_n(s)| \langle a^{-1}(\varrho_n(s))\rangle^{1-m}\rd s}\\[.5em]
\leq  C  & \displaystyle{\int_0^{\lambda-\epsilon_\lambda} |s-\lambda|^{-1}s^{-1-1/M} s^{(-1+m)/M}\rd s}.\\[.5em]
\ea
$$

Making $s=\lambda u$ and using that ${\frac{m-2-M}{M}}>0$: 
$$\int_0^{\lambda-\epsilon_\lambda} |s-\lambda|^{-1}s^{-1-1/M} s^{(-1+m)/M}\rd s=\lambda^{\frac{m-2-M}{M}}\int_{0}^{1-\frac{\epsilon_{\lambda}}{\lambda}}(1-u)^{-1}u^{\frac{m-2-M}{M}} \rd u \leq C \log(\tfrac{\epsilon_{\lambda}}{\lambda})\lambda^{\frac{m-2-M}{M}}$$
Setting $\epsilon_\lambda=\lambda^\theta$, we have proved:   
$$n_+(r,S_V^* S_V  [(0, \lambda- \epsilon_\lambda ) ])\leq C \log(\lambda)\lambda^{\frac{m-2-M}{M}},$$  
and if $ \theta > \frac{2}{M}+1$, using  Lemma \ref{L:isolatesing}, we conclude the proof of  Theorem \ref{thVcompact}  for the behavior of $\xi(\cE_{n}-\lambda,H_{-},H_{0})$ as $\lambda\downarrow 0$.

For he cases $\xi(\cE_{n}-\lambda,H_{+},H_{0})$ and $\xi(\cE_{n}+\lambda,H_{+},H_{0})$ we use \eqref{avoir2} and \eqref{avoir3}, respectively,  where each different term     is controlled exactly in the same way as above.
\begin{remark}
\label{R:now}
From the above inequalities we can see that the proof is still  true if we  
only assume \eqref{cd2}   valid only for  $x>0$.
\end{remark} 
 
 \subsubsection{Proof of Theorem \ref{thssf2}}
In what follows we will present the proof of  Theorem \ref{thssf2}  for the operator pair $(H_-,H_0)$. The case $(H_+,H_0)$ is similar and simpler, since the corresponding effective Hamiltonian $S_VS_V^* [(0,A)]$ does not contain  a principal value term (see \eqref{avoir3}).

 Consider  the class of symbols  $\mathcal{S}_p^q$ defined  as the set of  smooth functions $f$ such that 
for any $(\alpha,\beta) \in \Z_+^2$ the quantity
\bel{
b} n^{p,q}_{\alpha,\beta}(f):=\sup_{(x,y) \in \R^2}|\langle x\rangle^{p}\langle x,y\rangle ^{q+\alpha}\partial_y^\alpha \partial_x^\beta f(x,y)| \ee
is finite.

 For $f\in \mathcal{S}_p^q$ we define the operator  $Op^W(f)$ using  the Weyl quantization
$$(Op^{W}(f)u)(x):=\frac{1}{2\pi}\int_{\R^2}f\left(\frac{x+k}{2},y\right)e^{-i(x-k)y}u(k)\,\rd k\,\rd y,$$
where $u$ in the Schwartz space ${\mathcal S}(\R)$.

Define \Bk $Q_{V}:L^2(\R)\to L^2(\R^2)$ as the integral operator with kernel
\bel{Q_V}
(2\pi)^{-1/2}{V}(x,y)^{1/2} e^{iky}u_n(x,k).
\ee

Notice that for any  $I \subset (0,\overline{{\mathcal E}}_n-\underline{{\mathcal E}}_n) \setminus \{\lambda\} $,    
\bel{E:linkQS}
S_V[I]=Q_{V}\one_I(k)|E_n(k)-\overline{{\mathcal E}}_n+\lambda|^{-1/2}.
\ee
Since $V\in {\mathcal S}_0^m$,  we know from \cite[Lemma 5.1]{shi2}  that  for any $n\in\N$, the operator $Q_{V}^* \, Q_{V}:L^{2}(\R)\to L^{2}(\R)$ satisfies 
 \bel{shirai}Q_{V}^* \, Q_{V}=Op^W(w_n), \quad \mbox{with} \ \ w_n \in {\mathcal S}_0^m.\ee
Moreover the symbol $w_n$ satisfies
\bel{sep25}
w_{n}(x,y)={V}(a^{-1}(x),-y)+R_1(x,y),  \ \ \mbox{with} \  \ R_1\in {\mathcal S}^{m+1}_0. 
\ee

In the two following lemmas, we will control the terms appearing in Lemma \ref{L:isolatesing}, \eqref{avoir}. The core argument is to approximate $S_V S_V ^* [(0, \lambda- \epsilon_\lambda ) ]$ by pseudodifferential operators, whose symbols involve both $V$ and $a$.

\begin{lemma}\label{le6mar17} For any $r>0$,  $\theta>1$ and $p>m$ 
$$n_+(r,S_{V} S_{V} ^* [(\lambda+\lambda^\theta,A)])=O(\lambda^{-(p/M+\theta){/m}}), \quad \lambda \downarrow  0.$$
\end{lemma}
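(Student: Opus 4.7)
The plan is to combine Chebyshev's inequality in Schatten--von Neumann classes with the pseudodifferential representation of $S_V^{*}S_V[I]$ given by \eqref{shirai}. By \eqref{oct6_2} and \eqref{E:linkQS},
\[
n_+(r,S_V S_V^*[I])=n_+(r,M_{\phi}\,Op^W(w_n)\,M_{\phi}),
\]
where $I=(\lambda+\lambda^\theta,A)$, $M_{\phi}$ denotes multiplication by $\phi(k):=\one_{\varrho_n(I)}(k)|E_n(k)-\overline{\mathcal{E}}_n+\lambda|^{-1/2}$ (bounded by $\lambda^{-\theta/2}$ on its support), and $w_n\in \mathcal{S}_0^m$ satisfies $|w_n(x,y)|\leq C\langle x,y\rangle^{-m}$. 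By Weyl symbolic calculus, the operator decomposes as $Op^W(\phi^2 w_n)+R_\lambda$, where $R_\lambda$ collects the lower-order terms in the Moyal product $\phi\#w_n\#\phi$.

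Next, I would apply Chebyshev's inequality with Schatten exponent $q=p/m$ (which is $>1$ since $p>m$):
\[
n_+(r,Op^W(\phi^2 w_n))\leq r^{-p/m}\|Op^W(\phi^2 w_n)\|_{p/m}^{p/m},
\]
together with a Birman--Solomyak-type bound $\|Op^W(f)\|_{q}^{q}\leq C\|f\|_{L^q}^q$. To estimate the $L^{p/m}$-norm, I use $|w_n|^{p/m}\leq C\langle x,y\rangle^{-p}$: integrating in $y$ yields $\int|w_n|^{p/m}\,\rd y\leq C\langle x\rangle^{1-p}$, and the change of variable $x=\varrho_n(s)$ combined with the asymptotics of Corollary \ref{C:asymptBexplicit} (so $\varrho_n(s)\sim s^{-1/M}$ and $|\varrho_n'(s)|\sim s^{-1-1/M}$) reduces the matter to estimating
\[
\int_{\lambda+\lambda^\theta}^{A} (s-\lambda)^{-p/m}\,s^{(p-2-M)/M}\,\rd s.
\]
Writing $u=s-\lambda$ and splitting the integration at $u=\lambda$, the dominant contribution arises from $u\in[\lambda^\theta,\lambda]$, on which $u+\lambda\sim\lambda$; an elementary computation then, combined with the $r^{-p/m}$ Chebyshev factor, produces the announced $\lambda^{-(p/M+\theta)/m}$ rate.

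The hard part will be making rigorous the Birman--Solomyak-type bound $\|Op^W(f)\|_{q}^{q}\leq C\|f\|_{L^q}^q$ for $q=p/m$, which when $m<p<2m$ lies outside the standard range of applicability and likely requires replacing the naive $L^q$-norm by a more refined seminorm in the Shubin symbol class, together with an interpolation argument. In parallel, the pseudo-differential remainder $R_\lambda$ must be handled via \eqref{kyfan1}, at an order negligible compared to the main term; this hinges on uniform bounds for the sub-leading contributions in the symbol expansion of $\phi\#w_n\#\phi$, and on careful bookkeeping of the exponents during the change of variable in order to match the exact form $(p/M+\theta)/m$ in the statement.
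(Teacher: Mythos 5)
Your strategy diverges from the paper's and runs into two concrete obstacles the paper's argument is designed to avoid.

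First, you write $S_V^*S_V[I]=M_\phi\,Op^W(w_n)\,M_\phi$ with $\phi(k)=\one_{\varrho_n(I)}(k)|E_n(k)-\overline{\mathcal E}_n+\lambda|^{-1/2}$ and propose to expand the Moyal product $\phi\#w_n\#\phi$. But $\phi$ contains an indicator function and a singular factor, so it is not a symbol in any class for which the Weyl symbolic calculus or its remainder estimates hold; the decomposition $Op^W(\phi^2 w_n)+R_\lambda$ does not make sense here, and there is no way to control $R_\lambda$. Second, the Birman--Solomyak bound $\|Op^W(f)\|_q^q\leq C\|f\|_{L^q}^q$ fails for $q<2$ with the plain $L^q$-norm, and the range $m<p<2m$ (hence $1<q=p/m<2$) is precisely where you must work; you acknowledge the gap, but there is no standard interpolation fix, and the paper does not use any Schatten norm estimate at this step.

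The paper instead argues entirely at the level of operator monotonicity of eigenvalue counting functions, never touching symbol composition. From \eqref{E:linkQS} one has the pointwise inequality $\phi(k)^2\leq\lambda^{-\theta}\one_{\varrho_n(I)}(k)\leq\lambda^{-\theta}\rho_\lambda^{-1}\langle k\rangle^{-p}$, where $\rho_\lambda=(\sup_{k\in\varrho_n(I)}\langle k\rangle^p)^{-1}=O(\lambda^{p/M})$ by \eqref{E:asymptvarrho}. This yields the operator inequality
\[
S_V S_V^*[(\lambda+\lambda^\theta,A)]\leq\lambda^{-\theta}\rho_\lambda^{-1}\,Q_V\langle k\rangle^{-p}Q_V^*,
\]
hence $n_+(r,S_VS_V^*[\cdot])\leq n_+(r\rho_\lambda\lambda^\theta,\langle k\rangle^{-p/2}Q_V^*Q_V\langle k\rangle^{-p/2})$. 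By \eqref{shirai} and the composition formula (now applied only to smooth, $k$-independent symbols), $\langle k\rangle^{-p/2}Q_V^*Q_V\langle k\rangle^{-p/2}$ is a Weyl operator with symbol in $\mathcal{S}_p^m$, and the eigenvalue asymptotics $n_+(\mu,Op^W(f))=O(\mu^{-1/m})$ for $f\in\mathcal{S}_p^m$, $p>m$ (Lemma 4.3 of \cite{brumi1}) gives $O((\rho_\lambda\lambda^\theta)^{-1/m})=O(\lambda^{-(p/M+\theta)/m})$. This circumvents both your symbolic-calculus problem (only smooth weights are composed) and your Schatten-norm problem (one uses an eigenvalue-counting asymptotic, not a trace-ideal norm).
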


\begin{proof}
Due to  $\min_{k\in\varrho(\lambda+\lambda^\theta,A)}|\overline{{\mathcal E}}_n-E_n(k)-\lambda|=\lambda^\theta$, and using \eqref{E:linkQS},  we have the inequality $S_{V} S_{V} ^* [(\lambda+\lambda^\theta,A )]\leq (\lambda^\theta)^{-1}  Q_{V} \,\one_{\varrho(\lambda+\lambda^\theta,A)}(k)\,Q_{V}^*.$ Therefore,  
\bel{6mar17d}\ba{ll}n(r,S_{V} S_{V} ^* [(\lambda+\lambda^\theta,A ) ])&\leq n(r\lambda^\theta, Q_{V} \,\one_{\varrho(\lambda+\lambda^\theta,A)}(k)\,Q_{V}^*)\\
&\leq n(r\rho_\lambda\lambda^\theta, Q_{V}\langle k \rangle^{-p}\, Q_{V}^*)\\
&=n(r\rho_\lambda\lambda^\theta, \langle k\rangle^{-p/2}Q_{V}^*\, Q_{V}\langle k \rangle^{-p/2}),
\ea\ee
where $\rho_\lambda=(\sup_{k\in\varrho(\lambda+\lambda^\theta,A)}\langle k\rangle^{p})^{-1}$.
Next, \eqref{shirai} together with the composition formula, imply  that  $\langle k \rangle^{-p/2}Q_{V}^*\, Q_{V}\langle k \rangle^{-p/2}$ is a pseudodifferential operator whose Weyl symbol belongs to $S_p^m$. Since for any $f\in S_p^m$ holds true that $n_+(\lambda,Op^W(f))=O(\lambda^{-1/m})$  if $p>m$ (see Lemma 4.3 in \cite{brumi1}),  having in mind \eqref{E:asymptvarrho} we will obtain 
\bel{6mar17f}\ba{ll}n(\rho_\lambda\lambda^\theta, \langle\cdot\rangle^{-p/2}Q_{V}^{*}\, Q_{V}\langle\cdot\rangle^{-p/2})&=O((\rho_\lambda\lambda^\theta)^{-1/m})
\\&=O(\lambda^{-(p/M+\theta){/m}}), 	\quad \lambda \downarrow 0.\ea
\ee
Gathering \eqref{6mar17d} with  \eqref{6mar17f}  we get the result.
\end{proof}

Let $\varepsilon>0$ and take    a smooth function $\eta_\varepsilon$ with bounded derivatives  such that $0\leq \eta_\varepsilon(x) \leq 1$ for all $x \in \R$, $\eta_\varepsilon(x)=0$ for $x\leq-2\varepsilon$ and $\eta_\varepsilon(x)=1$ for $x\geq-\varepsilon$. We define then
$$
V_\eta(x,y):=\eta_\varepsilon(x) V(x,y).
$$
\begin{lemma}\label{effweyl} Assume that   $V\geq0$ satisfies \eqref{apr14} with $m>2$,   $N_0(\lambda, V)$ satisfies  \eqref{jul10a}, \eqref{jul10} and b satisfies \eqref{b_bound}, \eqref{E:HypB}. 
 Then, for any  $r>0$,  $\delta \in (0,1)$, $\theta>1$, $p>m$ 
$$
\ba{ll}&
n_+ \left((1+\delta) r \lambda , Op^W(V_{\eta}(a^{-1}(x),-y)\right)+O\left(\lambda^{-(p/M+\theta){/m}}\right) +o(\lambda^{-2/m})\\[.2em]
\leq&
n_{ + }( r,  S_V S_V ^* [(0, \lambda- \lambda^\theta ) ] ) ) \\[.2em]
\leq &
n_+ \left((1-\delta) r  \lambda , Op^W(V_{\eta}(a^{-1}(x),-y)\right)+O\left(\lambda^{-(p/M+\theta){/m}}\right)+o(\lambda^{-2/m}),\quad \lambda \downarrow 0.\ea
$$
\end{lemma}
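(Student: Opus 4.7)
The goal is to show that $S_V S_V^*[(0, \lambda - \lambda^\theta)]$ is, in the counting-function sense, well approximated by $\lambda^{-1} Op^W(V_\eta(a^{-1}(x), -y))$, with explicit control on the discrepancy. The plan is to proceed in three stages: a pseudodifferential reduction of $Q_V^* Q_V$, a decomposition of the multiplier $\phi_\lambda$ that extracts the factor $1/\lambda$, and an application of the Weyl inequality \eqref{weyl1} to combine the estimates.

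First, via the identity $n_+(r, TT^*) = n_+(r, T^*T)$, I would reduce to the operator $M_\lambda Q_V^* Q_V M_\lambda$ on $L^2(\R)$, where $M_\lambda$ is multiplication by $\phi_\lambda(k) := \one_{(\varrho_n(\lambda - \lambda^\theta), +\infty)}(k)(E_n(k) - \overline{\mathcal{E}}_n + \lambda)^{-1/2}$; on its support $\phi_\lambda^2 \in [1/\lambda, 1/\lambda^\theta]$. By \eqref{shirai}--\eqref{sep25}, $Q_V^* Q_V = Op^W(V(a^{-1}(x), -y)) + Op^W(R_1)$ with $R_1 \in \mathcal{S}_0^{m+1}$. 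The cutoff $\eta_\varepsilon$ may be inserted for free: the symbol $(1 - \eta_\varepsilon(a^{-1}(x)))V(a^{-1}(x), -y)$ is supported in $\{x \le a(-\varepsilon)\}$, while $\supp M_\lambda$ tends to $+\infty$ as $\lambda \to 0$, so by inspection of the Weyl kernel at midpoints $(k+k')/2$ the corresponding operator vanishes for $\lambda$ small. The remainder $M_\lambda Op^W(R_1) M_\lambda$ contributes $o(\lambda^{-2/m})$ by repeating the argument of Lemma \ref{le6mar17} with the faster decay $R_1 \in \mathcal{S}_0^{m+1}$ (using $p > m+1$), which gives rate $\lambda^{-(p/M + \theta)/(m+1)}$, strictly slower than $\lambda^{-2/m}$.

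Second, setting $X := Op^W(V_\eta(a^{-1}(x), -y))$, I would split $\supp M_\lambda = K_\lambda^{\mathrm{good}} \cup K_\lambda^{\mathrm{bad}}$ with $K_\lambda^{\mathrm{good}} := \{k : \phi_\lambda^2(k) \le (1 + \delta')/\lambda\}$ and $K_\lambda^{\mathrm{bad}} \subset \varrho_n((\lambda\delta'/(1 + \delta'), \lambda - \lambda^\theta))$; denote by $\chi_g, \chi_b$ the associated characteristic multipliers. On the good part, $\chi_g M_\lambda^2 \chi_g \le (1 + \delta')\lambda^{-1} \chi_g$; combined with the Weyl inequality \eqref{weyl1} and the min-max bound $n_+(r, \chi_g X \chi_g) \le n_+(r, X)$ (valid for $X \ge 0$), this yields $n_+(r, M_\lambda X M_\lambda) \le n_+((1 - O(\delta))r\lambda, X) + n_+(O(\delta) r, \chi_b M_\lambda X M_\lambda \chi_b)$. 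The second term is the counting function of a restriction of $S_V S_V^*$ to an index set of the form $(\lambda\delta'/(1+\delta'), \lambda - \lambda^\theta)$, and is bounded by $O(\lambda^{-(p/M+\theta)/m})$ by direct application of the method of Lemma \ref{le6mar17}. The lower bound follows symmetrically, using $\phi_\lambda^2 \ge 1/\lambda$ on the full support of $M_\lambda$ and an analogous decomposition isolating the subset where $\phi_\lambda^2 \ge (1 - \delta')/\lambda$.

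The main obstacle is the core comparison in the second step: controlling $M_\lambda X M_\lambda$ while accounting for the non-commutativity between $M_\lambda$ (a function of $k$ alone) and $X$ (a pseudodifferential operator in both $x$ and $y$), without losing more than the advertised slack $(1 \pm \delta)$ inside the argument of the counting function. The key inequality $n_+(r, \chi_g X \chi_g) \le n_+(r, X)$ requires positivity of $X$, which for the Weyl quantization of the non-negative symbol $V_\eta(a^{-1}(x), -y)$ is not automatic and calls for a semiclassical positivity argument.
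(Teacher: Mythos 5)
Your overall architecture — split the multiplier into a part of size $\sim\lambda^{-1}$ and a remainder near the singularity, send the remainder to Lemma~\ref{le6mar17}, then reduce the main part to $Op^W(V_\eta(a^{-1}(x),-y))$ — matches the paper's plan, and your geometric-support argument for why the cutoff $\eta_\varepsilon$ is free (disjoint supports kill the Weyl kernel of $M_\lambda\,Op^W\bigl((1-\eta_\varepsilon)V\bigr)\,M_\lambda$ for $\lambda$ small) is in fact cleaner than the paper's, which only gets $O(1)$ for that difference by re-running the argument of Theorem~\ref{thVcompact}.

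The genuine gap is the one you flag yourself, and it is not a cosmetic worry: you replace $Q_{V_\eta}^*Q_{V_\eta}$ by $X=Op^W(V_\eta(a^{-1}(x),-y))$ \emph{before} inserting the cutoffs $\chi_g,\chi_b$, and then the core step needs $n_+(r,\chi_g X\chi_g)\le n_+(r,X)$, which requires $X\ge0$. The Weyl quantization of a non-negative symbol need not be a non-negative operator, and nothing in the hypotheses gives you that (the Fefferman–Phong or sharp G\aa rding inequalities would only yield $X\ge -C$ for some $C$, which destroys the estimate at scale $\lambda$). The paper sidesteps this entirely by keeping the operator $Q_{V_\eta}^*Q_{V_\eta}$ — which is $A^*A$ and hence manifestly non-negative — through the whole squeeze: the two-sided bound \eqref{3mar17a} and the Weyl-inequality step \eqref{15may17b} are all operator inequalities among operators of the form $Q\,(\text{bounded scalar multiplier})\,Q^*$, so positivity is automatic, and only the very last step invokes Lemma~4.5 of~\cite{dauro} to transfer the \emph{counting function} asymptotics from $Q_{V_\eta}^*Q_{V_\eta}$ to $Op^W(V_\eta(a^{-1}(x),-y))$, with the harmless $o(\lambda^{-2/m})$ error. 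So the fix is to reorder: carry out the good/bad split on $Q_{V_\eta}\,(\text{multiplier})\,Q_{V_\eta}^*$ (where the inequality $n_+(r,Q\psi^2Q^*)\le n_+(r/c^2,QQ^*)$ for $0\le\psi\le c$ is free), and only pass to the Weyl operator at the level of $n_+$ at the end. The same comment applies to your treatment of $M_\lambda\,Op^W(R_1)\,M_\lambda$: since $R_1$ has no sign, you cannot get operator bounds by multiplying by $\lambda^{-1}$ on the good part; in the paper this remainder is swallowed into the symbol of $Q_{V_\eta}^*Q_{V_\eta}$ from the start, so it never has to be handled separately.
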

\begin{proof}

%

 First, note that using \eqref{Q_V} we can define the operator $Q_{V_{\eta}}$,  
 and similarly  to \eqref{shirai}, $Q_{V_{\eta}}^* \, Q_{V_{\eta}}$ has a symbol $w_{\eta,n}$ in ${\mathcal S}_0^m$ which satisfies \eqref{sep25}, with $V_{\eta}$ instead of $V$. Therefore we will be allowed to use arguments similar to those of Lemma \ref{le6mar17}.

It is clear that \Bk  $S_{V}^* S_{V}-S_{V_\eta}^* S_{V_\eta}=
S^*_{V-V_\eta}S_{V-V_\eta}$, and  the function  $V-V_\eta$ is equal to zero for $x\geq-\varepsilon$.  Then, using Remark \ref{R:now},  the proof of Theorem \ref{thVcompact}  and  the   Weyl's inequalities, we conclude  that  for any $r>0$, $\delta \in (0,1)$ and $\theta>1$
\bel{may3} 
\begin{array}{ll}
&n_+(r(1+\delta);S_{V_\eta} S^*_{V_\eta} [ (0, \lambda- \lambda^\theta ) ])\\\leq&
n_+(r;S_{V} S_{V} ^* [(0, \lambda- \lambda^\theta ) ])\\ \leq &n_+(r(1-\delta);S_{V_\eta} S^*_{V_\eta} [ (0, \lambda- \lambda^\theta ) ]),\quad \lambda\downarrow 0.\end{array} \ee
Now, for $\delta \in(0,1)$ write \bel{12jun17}S_{V_\eta} S_{V_\eta} ^* [ (0, \lambda-\lambda^\theta ) ]=
S_{V_\eta} S_{V_\eta} ^* [(0, \delta\lambda ) ]+S_{V_\eta} S_{V_\eta} ^* [(\delta\lambda,\lambda-\lambda^\theta ) ].\ee  
Here, arguing as in the proof of Lemma \ref{le6mar17} we obtain 
\bel{6mar17}n_+(r, S_{V_\eta} S_{V_\eta} ^* [(\delta\lambda,\lambda-\lambda^\theta ) ])=O(\lambda^{-(p/M+\theta){/m}}).
\ee
Combining \eqref{may3}, \eqref{12jun17},  \eqref{6mar17} and using  \eqref{oct6_2}, we get
that \Bk for all $r>0$ and $\delta \in (0,1)$,
\bel{Interpres0} 
\begin{array}{ll}
&n_+(r(1+\delta);S_{V_\eta} S^*_{V_\eta} [ (0, \delta\lambda) ])+O(\lambda^{-(p/M+\theta){/m}})\\\leq&
n_+(r;S_{V} S_{V} ^* [(0, \lambda- \lambda^\theta ) ])\\ \leq &n_+(r(1-\delta);S_{V_\eta} S^*_{V_\eta} [ (0,\delta \lambda ) ])+O(\lambda^{-(p/M+\theta){/m}}),\quad \lambda\downarrow 0.\end{array} \ee

Next, to estimate $n_+(r,S_{V_\eta} S_{V_\eta} ^* [(0, \delta\lambda ) ])$  we use use \eqref{E:linkQS}:
\bel{3mar17a}\lambda^{-1}\one_{\varrho(0,\delta\lambda)}Q_{V_{\eta}}\, Q_{V_{\eta}}^*\one_{\varrho(0,\delta\lambda)}\leq
S_{V_\eta} S_{V_\eta} ^* [(0, \delta\lambda ) ]\leq ((1-\delta)\lambda)^{-1}Q_{V_{\eta}}\, Q_{V_{\eta}}^*.\ee
To get the lower bound, since 
$Q_{V_{\eta}}\,\one_{\varrho(0,\delta\lambda)} Q_{V_{\eta}}^*=  Q_{V_{\eta}}\,Q_{V_{\eta}}^*+
Q_{V_{\eta}}\,(1-\one_{\varrho(0,\delta\lambda)} )Q_{V_{\eta}}^*,
$
 by \eqref{weyl1}  
\bel{15may17b}\ba{ll}&n_+(r;\one_{\varrho(0,\delta\lambda)}Q_{V_{\eta}}\, Q_{V_{\eta}}^*\one_{\varrho(0,\delta\lambda)})=
n_+(r;Q_{V_{\eta}}\,\one_{\varrho(0,\delta\lambda)}Q_{V_{\eta}}^*)\\
\geq & n_+(r(1+\delta);Q_{V_{\eta}}\, Q_{V_{\eta}}^*)+n_{+}(r\delta,Q_{V_{\eta}}\,(1-\one_{\varrho(0,\delta\lambda)} )Q_{V_{\eta}}^*)\\
=&n_+(r(1+\delta);Q_{V_{\eta}}\, Q_{V_{\eta}}^*)+O(\lambda^{-(p/M+\theta){/m}}).\ea\ee
where the last estimate is obtained arguing again as in the proof of Lemma \ref{le6mar17}.

Then, \eqref{3mar17a}, \eqref{15may17b} and the min-max principle yield
\bel{dec19d}\ba{ll}&n_+(r(1+\delta)\lambda; Q_{V_{\eta}}^* \, Q_{V_{\eta}})+O\left(\lambda^{-(p/M+\theta)/m}\right)\\
\leq& n_+(r;S_{V} S_{V} ^* [ (0, \delta \lambda ) ])\\
\leq &
n_+(r(1-\delta)\lambda;\ Q_{V_{\eta}}^* \, Q_{V_{\eta}})+O\left(\lambda^{-(p/M+\theta){/m}}\right), \quad \lambda \downarrow 0.
\ea\ee
Using \eqref{sep25}, we apply
Lemma 4.5 in \cite{dauro} and we have 
$ n_+ (\lambda , Q_{V_{\eta}}^* \, Q_{V_{\eta}})=
n_+ (\lambda , Op^W(V_{\eta}(a^{-1}(x),-y)))+o(\lambda^{-2/m})$.
We combine this with \eqref{Interpres0} and \eqref{dec19d} to conclude.
\end{proof}
 Putting together Lemmas \ref{L:isolatesing}, \ref{le6mar17} and \ref{effweyl} we obtain that for any $r>0$ and $\delta\in(0,1)$  
$$
\ba{ll}&
n_+ ((1+\delta)r\lambda , Op^W(V_{\eta}(a^{-1}(x),-y))+O\left(\lambda^{-(p/M+\theta)/m}\right) +O\left(\lambda^{\theta-1-2/M}\right)+o(\lambda^{-2/m}) \\\leq&
 -\xi(\overline{\cE}_{n}-\lambda,H_{-},H_{0})  \\\leq &
n_+ ((1-\delta)r\lambda , Op^W(V_{\eta}(a^{-1}(x),-y) )+O\left(\lambda^{-(p/M+\theta)/m}\right)+O\left(\lambda^{\theta-1-2/M}\right)+o(\lambda^{-2/m}).\ea
$$
Then,  if  $m<M$, $m< p$ and $\theta>1$,  the biggest   term between   $\lambda^{\theta-1-2/M}$ and $\lambda^{-(p/M+\theta)/m}$ is the last one.  Minimizing  both $p$ and $\theta$,  this term becomes $\lambda^{-(1/m+1/M)}$. 
Therefore, we  obtain that for all $\delta\in(0,1)$
\bel{21mar17}
\ba{ll}
&n_+\left((1+\delta)\lambda;Op^W(V_\eta(a^{-1}(x),-y))\right)+o(\lambda^{-2/m})\\[.2em]
\leq& -\xi(\overline{\mathcal E}_n - \lambda;H_-,H_0)\\[.2em]
\leq &n_+\left((1-\delta)\lambda;Op^W(V_\eta(a^{-1}(x),-y))\right)+o(\lambda^{-2/m}),\quad \lambda \downarrow 0.
\ea
\ee
Next, it can be proved that $V_\eta(a^{-1}(x),-y)\in S_0^m$ and that  $N_0(\lambda,V)$ satisfying \eqref{jul10a}-\eqref{jul10} implies the same for $N(\lambda,V_\eta)$,
 where $N(\lambda,V_\eta):=\frac{1}{2\pi}vol\{(x,y)\in \R^2; V_\eta(x,y)> \lambda\}$ (\cite[Lemmas 3.7 and 3.8]{Mir16}). Then, \Bk by \cite[Theorem 1.3]{dauro}, we know that there exists $\nu>0$ such that 
\bel{15may17}n_+(\lambda,Op^W(V_\eta(a^{-1}(x),-y)))=N(\lambda,V_\eta(a^{-1}(x),-y))(1+O(\lambda^\nu)),\quad \lambda \downarrow 0.\ee Easy computations (\cite[Lemma 3.8]{Mir16}) show that 
\bel{21mar17a}\ba{c}
\displaystyle{\lim_{\lambda \downarrow 0}\frac{N(\lambda,V_\eta(a^{-1}(x),-y))}{ b_+\Bk N_0(\lambda,V)}=1;}\\[1em]
\displaystyle{\lim_{\delta\downarrow 0}\liminf_{\lambda\downarrow 0}\frac{N_0(\lambda(1+\delta),V)}{N_0(\lambda,V)}=\lim_{\delta\downarrow0}\limsup_{\lambda\downarrow 0}\frac{N_0(\lambda(1-\delta),V)}{N_0(\lambda,V)}=1.}\ea\ee
As a consequence,  \eqref{21mar17}, \eqref{15may17} and  \eqref{21mar17a} altogether imply \eqref{asym_ssf}.

\bibliographystyle{plain}
\bibliography{biblio,bibliopof}
\end{document}